%
%
%

\documentclass[graybox]{svmult}


\usepackage{mathptmx}       
\usepackage{helvet}         
\usepackage{courier}        
\usepackage{type1cm}        
%
\usepackage{makeidx}         
\usepackage{graphicx}        
\usepackage{multicol}        
\usepackage[bottom]{footmisc}


\makeindex             



\usepackage{graphicx}
\usepackage{color}
\usepackage{enumerate}
\usepackage{tikz}
\usetikzlibrary{shapes}
\usetikzlibrary{backgrounds}
\usetikzlibrary{patterns,fadings}
\usetikzlibrary{arrows,decorations.pathmorphing}
\usetikzlibrary{calc}

\definecolor{light-gray}{gray}{0.95}

\def\sqw{\hbox{\rlap{\leavevmode\raise.3ex\hbox{$\sqcap$}}$%
\sqcup$}}

\def\cqfd{\ifmmode\sqw\else{\ifhmode\unskip\fi\nobreak\hfil
\penalty50\hskip1em\null\nobreak\hfil\sqw
\parfillskip=0pt\finalhyphendemerits=0\endgraf}\fi}

\usepackage{caption}

\usepackage{mathrsfs}
\usepackage{amssymb}
\usepackage{amsfonts}
\usepackage{amsmath}

\usepackage{pdfsync}
\usepackage{hyperref}





%

\newtheorem{assumption}{Assumption}[section]

\newcommand{\mc}[1]{{\mathcal #1}}
\newcommand{\mf}[1]{{\mathfrak #1}}
\newcommand{\mb}[1]{{\mathbf #1}}
\newcommand{\bb}[1]{{\mathbb #1}}

\newcommand\bG{{\mathbf G}}
\newcommand\bH{{\mathbf H}}

\newcommand\bq{{\mathbf q}}
\newcommand\bp{{\mathbf p}}
\newcommand\be{{\mathbf e}}


\newcommand\N{{\mathbb N}}

\newcommand\RR{{\mathbb R}}

\newcommand\T{{\mathbb T}}
\newcommand\TT{{\mathbb T}}

\newcommand\PP{{\mathbb P}}
\newcommand\EE{{\mathbb E}}
\newcommand\LL{{\mathbb L}}
\newcommand\ZZ{{\mathbb Z}}

\newcommand\ve{\varepsilon}



\newcommand\br{{\mathbf r}}

\newcommand{\sfrac}[2]{{\smash{\frac{#1}{#2}}}}

\newcommand{\tnorm}{\vert \kern -1pt\vert\kern -1pt\vert}

\newcommand{\plus}{\!+\!}
\newcommand{\minus}{\!-\!}


\begin{document}

\title*{Diffusion of energy in chains of oscillators with conservative noise}
\author{C\'edric Bernardin}
\institute{C\'edric Bernardin \at Universit\'e de Nice Sophia-Antipolis, Laboratoire J.A. Dieudonn\'e, UMR CNRS 7351, Parc Valrose, 06108 Nice cedex 02- france, \email{cbernard@unice.fr}}
%
%
\maketitle

\abstract{These notes are based on a mini-course given during the conference Particle systems and PDE's - II which held at the Center of Mathematics of the University of Minho in December 2013. We discuss the problem of normal and anomalous diffusion of energy in systems of coupled oscillators perturbed by a stochastic noise conserving energy. }
\keywords{Superdiffusion, Anomalous fluctuations, Green-Kubo formula, Non Equilibrium Stationary States, Heat conduction, Hydrodynamic Limits, Ergodicity.}

\section*{}

%
%
The goal of statistical mechanics is to elucidate the relation between the microscopic world and the macroscopic world. {\textit{Equilibrium statistical mechanics}} assume the microscopic systems studied to be in equilibrium. In this course we will be concerned with {\textit{non-equilibrium statistical mechanics}} where time evolution is taken into account: our interest will not only be in the relation between the microscopic and the macroscopic scales in space but also in time. 

By microscopic system we refer to molecules or atoms governed by the classical Newton's equations of motion. The question is then to understand how do these particles manage to organize themselves in such a way as to form a coherent structure on a large scale. The ``structure'' will be described by few variables (temperature, pressure \ldots) governed by autonomous equations (Euler's equations, Navier-Stokes's equation, heat equation \ldots). The microscopic specificities of the system will appear on this scale only through the thermodynamics (equation of state) and through the transport coefficients. Unfortunately, we are very far from understanding how to derive such macroscopic equations for physical relevant interactions.

One of the main ingredients that we need to obtain the macroscopic laws is that the particles, which evolve deterministically, have a behavior that one can consider almost as being random. The reason for this is that the dynamical system considered is expected to have a very sensitive dependence on the initial conditions and therefore is chaotic. This `` deterministic chaos'' is a poorly understood subject for systems with many degrees of freedom and even a precise consensual formulation is missing.    

A first simplification to attack these problems consists in replacing the deterministic evolution of particles {\textit{ab initio}} by purely stochastic evolutions. Despite this simplification we notice that the derivation of the macroscopic evolution laws is far from being trivial. For example, we do not have any derivation of a system of hyperbolic conservation laws from a stochastic microscopic system after shocks.  Nevertheless, since the pioneering work of Guo, Papanicolaou,Varadhan (\cite{GPV1}) and Yau (\cite{yau1}), important progresses have been performed in several well understood situations by the development of robust probabilistic and analytical methods (see \cite{KL} and \cite{Sp} for reviews).  

In this course we will be mainly (but not only) interested in hybrid models for which the time evolution is governed by a combination of deterministic and stochastic dynamics. These systems have the advantage to be mathematically tractable and conserve some aspects of the underlying deterministic evolution. The stochastic noise has to be chosen in order to not destroy the main features of the Hamiltonian system that we perturb. 

The central macroscopic equation  of these lecture notes is the heat equation:
\begin{equation*}
\begin{cases}
\partial_t u = \partial_{x} ( D(u) \partial_x u), \quad x \in {\mathring U}, \quad t > 0,\\
\quad u(0, x) = u_{0} (x), \quad x \in U,\\
\quad u(t,x)=b(x), \quad x \in \partial U, \quad t>0.
\end{cases}
\end{equation*}
Here $u(t,x)$ is a function of the time $ t \ge 0$ and the space $x \in U \subset \RR^d$, $d \ge 1$, starting from the initial condition $u_0$ and subject to boundary conditions prescribed by the function $b$. The advantage of the heat equation with respect to other macroscopic equations such as the Euler or Navier-Stokes equations is that the notion of solution is very well understood. The dream would be to start from a system of $N \gg 1$ particles whose interactions are prescribed by Newton's laws and to show that in the large $N$ limit, the empirical energy converges in the diffusive time scale $t=\tau  N^2$ to $u$ ($\tau$ is the microscopic time and $t$ the macroscopic time). In fact, this picture is expected to be valid only under suitable conditions and to fail for some low dimensional systems. In the case where the heat equation (or its variants) holds we say that the system has a normal behavior. Otherwise anomalous behavior occurs and the challenging question (even heuristically) is to know by what we shall replace the heat equation and what is the time scale over which we have to observe the system in order to see this macroscopic behavior ( \cite{BLRB}, \cite{Dhar},\cite{LLP1} for reviews).

The course is organized as follows. In Chapter \ref{ch:models} we introduce the models studied. Chapter \ref{ch:normal} is concerned with models which have a normal diffusive behavior. In Chapter \ref{ch:anomalous} we are interested in systems producing an anomalous diffusion. An important issue not discussed here is the effect of disorder on diffusion problems. In order to deal with lecture notes of a reasonable size, many of the proofs have been suppressed or only roughly presented.


\section[Models]{Chains of oscillators}
\label{ch:models}

\subsection{Chains of oscillators with bulk noise}

Chains of coupled oscillators are usual microscopic models of heat conduction in solids. Consider a finite box $\Lambda_N =\{ 1 ,\ldots, N\}^d \subset \ZZ^d$, $d\ge 1$, whose boundary $\partial \Lambda_N$ is defined as $\partial \Lambda_N = \{ x \notin \Lambda_N \, ; \, \exists y \in \Lambda_N, \; |x-y|=1\}$. Here $|\cdot|$ denotes the Euclidian norm in $\RR^d$ and $`` \cdot"$ the corresponding scalar product. Let us fix  a nonnegative pair interaction potential $V$ and a pinning potential $W$ on $\RR$. The atoms are labeled by $x \in \Lambda_N$. The momentum of atom $x$ is $p_x \in {\RR}$ and its displacement from its equilibrium position {\footnote{We restrict us to the case where $q_x \in \RR^n$ with $n=1$ because the relevant dimension of the system is the dimension $d$ of the lattice. Most of the results stated in this manuscript can be generalized to the case $n \ge 1$.}} is ${q_x} \in \RR$. The energy ${\mc E}_x$ of the atom $x$ is the sum of the kinetic energy, the pinning energy and the interaction energy: 
\begin{equation}
\label{eq:1-energy-site-x}
{\mc E}_x= \frac{|p_x|^2}2 + W(q_x) + \frac{1}{2} \sum_{\substack{|y-x|=1,\\ y\in \Lambda_N}} V(q_x - q_y). 
\end{equation}
The Hamiltonian is given by
\begin{equation}
  \label{eq:1-hamilt}
  \mathcal H_N =  \sum_{x \in \Lambda_N}  {\mc E}_x + \partial \, {\mc H}_N
\end{equation}
where ${\partial} \, {\mc H}_N$ is the part of the Hamiltonian corresponding to the boundary conditions which are imposed. 

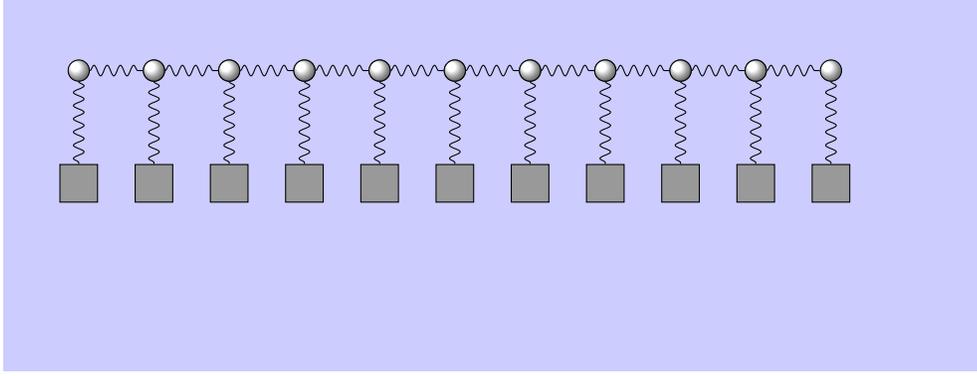
\begin{figure}[htbp]
\centering
\begin{tikzpicture}[scale=1]
\tikzfading[name=fade out, inner color=transparent!45,
         outer color=transparent!100]
\fill [blue!20, path fading=fade out] (-1,-4) rectangle (12,1);

\tikzstyle{atome}=[draw,circle,ball color=white];
\foreach \y in {0,...,9}{
\node[atome] (\y) at (\y,0) {};
\node[atome] (\y+1) at (\y+1,0) {};
\node[draw,rectangle, minimum width=0.5cm, minimum height=0.5cm,fill=black!40] (p\y) at (\y,-1.5){};
}
\foreach \y in {0,...,9}{
\draw [decorate,decoration={snake,amplitude=2pt,segment length=5pt}] (\y) -- (\y+1);
\draw [decorate,decoration={snake,amplitude=2pt,segment length=5pt}] (\y) -- (p\y);
}
\node[draw,rectangle, minimum width=0.5cm, minimum height=0.5cm,fill=black!40] (plast) at (9+1,-1.5){};
\draw [decorate,decoration={snake,amplitude=2pt,segment length=5pt}] (9+1) -- (plast);
\end{tikzpicture}
\caption{A one-dimensional chain of pinned oscillators with free boundary conditions}
\end{figure}


We will consider the following cases:
\begin{itemize}
\item Periodic boundary conditions: we identify the site $1$ to the site $N$ and denote the corresponding box by ${\TT}_N$, the discrete torus of length $N$ (then $\partial {\mc H}_N =0$).
\item Free boundary conditions: this corresponds to the absence of boundary conditions, i.e. to $\partial {\mc H}_N =0$.
\item Fixed boundary conditions: introduce the positions $q_y=0$, $y\in \partial \Lambda_N$, of some fictive walls. We add to the Hamiltonian ${\mc H}_N$ a boundary term $\partial {\mc H}_N = \partial^{\rm{f}}\,  {\mc H}_N$ given by
\begin{equation*}
\partial^{\rm{f}}\,  {\mc H}_N =  \sum_{\substack{|y-x|=1,\\ x \in \Lambda_N, y\in \partial\Lambda_N}} V(q_x - q_y)= \sum_{\substack{|y-x|=1,\\ x \in \Lambda_N, y\in \partial\Lambda_N}} V(q_x).
\end{equation*}
\item Forced boundary conditions: site ${\bf 1}=(1,\ldots,1)$ is in contact with a wall at position $q_0=0$ and each site $y \in \partial \Lambda_N \backslash \{ 0\}$ is driven by a constant force $\tau_y$. This results in a boundary term $\partial {\mc H}_N = \partial^{\rm{\tau}} \, {\mc H}_N$ given by  
\begin{equation}
\partial^{\rm{\tau}} \, {\mc H}_N = \sum_{\substack{|y-x|=1,\\ x \in \Lambda_N, y\in \partial\Lambda_N}} V(q_x - q_y) - \sum_{ y\in \partial\Lambda_N \backslash \{ 0 \} } \tau_y q_y.
\end{equation}
\end{itemize}
The equations of motion of the atoms are 
\begin{equation}
\label{eq:1-Newton}
{\dot {q_x}}= \partial_{p_x} {\mc H}_N, \quad {\dot p_x} = - \partial_{q_x} {\mc H}_N
\end{equation}
and the generator ${\mc A}_N$ of the system is given by the Liouville operator
\begin{equation*}
{\mc A}_N = \sum_{x \in {\Lambda}_N} \left\{  \partial_{p_x} {\mc H}_N \, \partial_{q_x} \, - \,  \partial_{q_x} {\mc H}_N \, \partial_{q_x} \right\}.
\end{equation*}

It will be also useful to consider the chain of oscillators in infinite volume, i.e. replacing $\Lambda_N$ by $\ZZ^d$, $d\ge 1$, in the definitions above. The formal generator ${\mc A}_N$ is then denoted by ${\mc A}$. The dynamics can be defined for a large set of initial conditions if $V$ and $W$ do not behave too badly (\cite{LLL}, \cite{MPP}, \cite{BO-livre1}). We define the set $\Omega$ as the subset of ${\RR}^{\ZZ^d}$ given by 
\begin{equation}
\Omega= \bigcap_{\alpha >0} \left\{ \xi \in \RR^{\ZZ^d} \, ; \, \sum_{x \in \ZZ^d} e^{-\alpha|x|} |\xi_x|^2 < +\infty \right\}
\end{equation}
and ${\tilde \Omega}= \Omega \times \Omega$.  We equip $\Omega$ with its natural product topology 
and its Borel $\sigma$-field and $\tilde \Omega$ by the corresponding product topology. For $X=\Omega$ or $X={\tilde \Omega}$, the set of Borel probability measures on $X$ is 
denoted by ${\mathcal P} (X)$. A function $f: X \to \RR$ is said to be \emph{local} 
if it depends of $\xi$ only through the coordinates $\{ \xi_x\, ; \, x \in \Lambda_f\}$, 
$\Lambda_f$ being a finite box of $\ZZ$. We also introduce the sets $C_0^k(X)$, $k \ge 1$ (resp. $k=0$),  
of bounded local functions on $X$ which are differentiable up to order $k$ with bounded partial derivatives (resp. continuous and bounded).

In the rest of the manuscript, apart from specific cases, we will assume that one of the following conditions hold:
\begin{itemize}
\item  The potentials  $V$ and $W$ have bounded second derivatives.  Then the infinite dynamics $(\omega(t))_{t \ge 0}$ can be defined for any initial condition $\omega^0=(\bq^0, \bp^0) \in {\tilde \Omega}$. Moreover ${\tilde \Omega}$ is invariant by the dynamics. This defines a semigroup $(P_t)_{t \ge 0}$ on $C_0^0 ({\tilde \Omega})$ and the Chapman-Kolmogorov equations 
\begin{equation}
(P_t f)(\omega) - f(\omega) = \int_0^t (P_s {\mc A} f ) (\omega) \, ds \; = \;   \int_0^t ({\mc A} P_s  f ) (\omega) \, ds 
\end{equation}
are valid for any $f \in C_0^1 ({\tilde \Omega})$.

\item The potential $W=0$ and the interaction potential $V$ has a second derivative uniformly bounded from above and below. It is more convenient to go over the deformation field $\eta_{(x,y)}= q_y -q_x$, $|x-y|=1$, which by construction is constrained to have zero curl. In $d=1$ we will denote $\eta_{(x-1,x)}= q_{x}- q_{x-1}$ by $r_x$. The dynamics (\ref{eq:1-Newton}) can be read as a dynamics for the deformation field and the momenta. Given say $q_0$, the scalar field $\bq=\{ q_x\}_{x \in \ZZ^d}$ can be reconstructed from $\eta$. In the sequel, when $W=0$, we will use these coordinates without further mention. The dynamics for the coordinates $\omega=(\eta, \bp)= (\eta_{(x,x+\be)}, p_x)_{|\be|=1, x \in \ZZ}$  can be defined if the initial condition satisfies $\omega^0 \in {\tilde \Omega}$. Moreover the set ${\tilde \Omega}$ is invariant by the dynamics. This defines a semigroup $(P_t)_{t \ge 0}$ on $C_0^0 ({\tilde \Omega})$ and the Chapman-Kolmogorov equations 
\begin{equation}
\label{eq:CK1}
(P_t f)(\omega) - f(\omega) = \int_0^t (P_s {\mc A} f ) (\omega) \, ds \; = \;   \int_0^t ({\mc A} P_s  f ) (\omega) \, ds 
\end{equation}
are valid for any $f \in C_0^1 ({\tilde \Omega})$.{\footnote{The generator ${\mc A}$ has to be written in terms of the deformation field.}}
\end{itemize}

Let us first consider the problem related to the characterization of equilibrium states. For simplicity we take  the finite volume dynamics with periodic boundary conditions. Then it is easy to see that the system conserves one or two physical quantities depending on whether the chain is pinned or not. The total energy ${\mc H}_N$ is always conserved. If $W=0$ the system is translation invariant and the total momentum $\sum_{x} p_x$ is also conserved. Notice that because of the periodic boundary conditions the sum of the deformation field $\sum_{x} \eta_{(x,x+e_i)}$ is automatically fixed equal to $0$ for any $i=1, \ldots, d$. 

Liouville's Theorem implies that the uniform measure $\lambda^N$ on the manifold $\Sigma^N$ composed of the configurations with a fixed total energy (and possibly  a fixed total momentum) is invariant for the dynamics. The micro canonical ensemble is defined as the probability measure $\lambda^N$. The dynamics restricted to $\Sigma^N$ is not necessarily ergodic. Two examples for which one can show it is not the case are the harmonic lattice ($V$ and $W$ quadratic) and the Toda lattice ($d=1$, $W=0$, $V ( r )=e^{-r} -1 +r $) which is a completely integrable system (\cite{Toda}). In fact what is really needed for our purpose is not the ergodicity of the finite dynamics but of the infinite dynamics. We expect that even if the finite dynamics are never ergodic the fraction of $\Sigma^N$ corresponding to non ergodic behavior decreases as $N$ increases, and probably disappears as $N=\infty$ (apart from very peculiar cases). Therefore a good notion of ergodicity has to be stated for infinite dynamics. The definition of a conserved quantity is not straightforward in infinite volume (the total energy of the infinite chain is usually equal to $+ \infty$). To give a precise definition we will use the notion of space-time invariant probability measures for the infinite dynamics defined above.

The infinite volume Gibbs grand canonical ensembles are such probability measures. They form a set of probability measures indexed by one (pinned chains) or $d+2$ (unpinned chains) parameters  and are defined by the so-called Dobrushin-Landford-Ruelle's equations. To avoid a long discussion we just give a formal definition (see e.g. \cite{Giac1} for a detailed study).
\begin{itemize}
\item Pinned chains ($W \ne 0$): the infinite volume Gibbs grand canonical ensemble $\mu_{\beta}$ with inverse temperature $\beta>0$ is the probability measure on ${\tilde \Omega}$ whose density with respect to the Lebesgue measure is
\begin{equation*}
Z^{-1} (\beta) \exp \left( - \beta \sum_{x \in \ZZ^d} {\mc E}_x \right).
\end{equation*} 
\item Unpinned chains ($W=0$): the infinite volume Gibbs grand canonical ensemble {\footnote{They are defined with respect to the gradient fields $\eta_{(x,y)}$. It would be more coherent to call them {\textit{gradient Gibbs measures}}. }} $\mu_{\beta, {\bar p}, \tau}$ with inverse temperature $\beta>0$, average momentum ${\bar p} \in \RR$ and tension $\tau=\beta^{-1} \lambda \in \RR^d$ is the probability measure on ${\tilde \Omega}$ whose density with respect to the Lebesgue measure is
\begin{equation}
\label{eq.Gibbsmeasures}
Z^{-1} (\beta, {\bar p}, \tau) \;  \exp \left( - \beta \, \sum_{x \in \ZZ^d} \{ {\mc E}_x - {\bar p}\,  p_x - \sum_{i=1}^d \tau_i \, \eta _{(x, x+e_i)}\} \right).
\end{equation} 
\end{itemize} 
Observe that in the one dimensional unpinned case we have simply product measures and that the tension $\tau$ is equal to the average of $V'(r_x)$.

Fix an arbitrary Gibbs grand canonical ensemble $\mu$. A probability measure $\nu$ is said to be $\mu$-regular if for any finite box $\Lambda \subset \ZZ^d$ whose cardinal is denoted by $|\Lambda|$,  the relative entropy of $\nu |_{\Lambda}$ w.r.t. $\mu |_{\Lambda}$ is bounded above by $C | \Lambda|$ for a constant $C$ independent of $\Lambda$. We recall that the relative entropy $H(\nu|\mu)$ of $\nu \in {\mc P} (X)$ with respect to 
$\mu \in {\mc P} (X)$, $X$ being a probability space, is defined as
\begin{equation}
\label{eq:ent009}
H(\nu | \mu) = \sup_{\phi} \left\{ \int \phi \, d\nu - 
\log \left( \int e^{\phi} \, d\mu \right) \right\},
\end{equation}
where the supremum is carried over all bounded measurable functions $\phi$ on $X$.  

For any arbitrary Gibbs grand canonical ensembles $\mu$ and $\mu'$, $\mu$ is $\mu'$-regular and $\mu'$ is $\mu$-regular. Therefore $\nu$ is $\mu$-regular is equivalent to $\nu$ is $\mu'$-regular and we simply say that $\nu$ is regular.  

A notion of ergodicity for infinite dynamics which is suitable to derive rigorously large scale limits of interacting particle systems is the following.

\begin{definition}[Macro-Ergodicity] {\footnote{The name has been proposed by S. Goldstein.}}  We say that the dynamics generated by ${\mc A}$ is {\textit{macro-ergodic}} if and only if the only space-time invariant {\footnote{Observe that a probability measure $\nu$ is time invariant for the infinite dynamics if and only if $\int {\mc A} f \, d\nu =0$ for any $f \in C_0^1 ({\tilde \Omega})$. This is a consequence of the Chapman-Kolmogorov equations (\ref{eq:CK1}).}} regular measures $\nu$ for ${\mc A}$ are mixtures (i.e. generalized convex combinations)  of Gibbs grand canonical ensembles. 
\end{definition}

If the microscopic dynamics is macro-ergodic, then, by using the relative entropy method developed in \cite{OVY}, we can derive the hydrodynamic equations {\footnote{The notion of hydrodynamic limits is detailed in Section \ref{subsec:hl-vf} and Section \ref{sec:hydroLimEuler}.}} in the Euler time scale of the chain before the appearance of the shocks, at least in $d=1$ (\cite{BO-livre1}). These limits form a triplet of compressible Euler equations (for energy ${\mf e}$, momentum ${\mf p}$ and deformation ${\mf r}$) of the form
\begin{equation}
\label{eq:Euler-general0}
\begin{cases}
\partial_t {\mf r} = \partial_q {\mf p}\\
\partial_t {\mf p} = \partial_q {\mf \tau}\\
\partial_t {\mf e} = \partial_q ({\mf p}  \tau)
\end{cases}
\end{equation}
where the pressure $\tau:= \tau (\mf r, {\mf e} -\tfrac{{\mf p}^2}{2})$ is a suitable thermodynamic function depending on the potential $V$. A highly challenging open question is to extend these results after the shocks. The proof can be adapted to take into account the presence of mechanical boundary conditions  (\cite{Even-Olla}).

We do not claim that the macro-ergodicity is a necessary condition to get Euler equations for purely Hamiltonian systems. We could imagine that weaker or different conditions are sufficient but in the actual state of the art the macro-ergodicity is a clear and simple mathematical statement of what we could require from deterministic systems in order to derive Euler equations rigorously. We refer the interested reader to \cite{Bricmont-Chance} and \cite{Szasz-ergodic} for interesting discussions about the role of ergodicity in statistical mechanics.

\subsubsection{Conserving noises}

In \cite{FFL}, Fritz, Funaki and Lebowitz prove a weak form of macro-ergodicity for a chain of anharmonic oscillators under generic assumptions on the potentials $V$ and $W$ that we do not specify here (see \cite{FFL}).

\begin{theorem}[\cite{FFL}] {\footnote{The proof given in \cite{FFL} assumes $W \ne 0$ but it can be adapted to the unpinned one dimensional case (see {\cite{BO-livre1}}). It would be interesting to extend this theorem to the general unpinned case.}} Consider the pinned chain $W \ne 0$ generated by ${\mc A}$ or an unpinned chain $W=0$ in $d=1$. The only regular time and space invariant measures for ${\mc A}$ which are such that conditionally to the positions configuration $\bq:=\{q_x \, ;\, x \in \ZZ^d\}$ the law of the momenta $\bp:=\{ p_x \, ;\, x \in \ZZ^d\}$ is exchangeable are given by mixtures of Gibbs grand canonical ensembles. 
\end{theorem}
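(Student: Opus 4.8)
The plan is to start from a regular, space-time invariant measure $\nu$ whose conditional law of the momenta given the positions is exchangeable — morally, the footprint left on a stationary state by a momentum-exchange conserving noise — and to squeeze it, using only the stationarity identity $\int {\mc A}f\, d\nu = 0$, valid for $f\in C_0^1({\tilde\Omega})$, together with De Finetti's theorem, until it must be a mixture of the Gibbs grand canonical ensembles $\mu_\beta$ (resp.\ $\mu_{\beta,\bar p,\tau}$). Concretely, disintegrate $\nu(d\bq\,d\bp) = \nu_\bq(d\bp)\,{\bar\nu}(d\bq)$; by exchangeability and De Finetti, conditionally on $\bq$ and the tail $\sigma$-field the momenta are i.i.d.\ with a common law $\rho$ measurable with respect to these data. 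The successive goals are then: (a) $\rho$ is a Gaussian $\mc{N}(\bar p,\beta^{-1})$ with $\beta>0$; (b) $\beta$ — and, in the unpinned case, also $\bar p$ and the tension $\tau$ — depends on the configuration only through the tail, so that it is a genuine mixing parameter; (c) conditionally on the tail, $\bar\nu$ is a (gradient) Gibbs measure for $V,W$ at inverse temperature $\beta$; (d) assemble (a)--(c).

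For (a)--(b) the workhorse test functions are $f=\psi(p_x)\,\zeta(\bq)$ and its multi-site relatives. Since $\partial_{p_y}{\mc H}=p_y$, one has ${\mc A}f=\psi(p_x)\sum_y p_y\,\partial_{q_y}\zeta - \zeta\,(\partial_{q_x}{\mc H})\,\psi'(p_x)$, so $\int {\mc A}f\,d\nu=0$ becomes, after conditioning on $\bq$ and using exchangeability — which makes $\EE_\nu[p_x\psi(p_x)\mid\bq]$ and $\EE_\nu[\psi'(p_x)\mid\bq]$ independent of $x$ — an integral identity coupling these conditional moments to the force $\partial_{q_x}{\mc H}=W'(q_x)+\sum_{|y-x|=1}V'(q_x-q_y)$. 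Taking $\psi(p)=p$ isolates the conditional second moment $T(\bq):=\EE_\nu[p_x^2\mid\bq]$, and letting $\psi$ range over polynomials yields moment recursions that — using the absolute continuity of $\bar\nu$ provided by regularity — force the conditional momentum law to be Gaussian; multi-site test functions $f=\psi_1(p_{x_1})\cdots\psi_k(p_{x_k})\zeta(\bq)$ pin down the product structure, so $\rho=\mc{N}(\bar p,\beta^{-1})$ with $\beta=1/T$. That the parameters depend on the configuration only through the tail is obtained by cross-comparing, for different reference sites $x$, the identities above — legitimate by exchangeability and spatial invariance — and exploiting the infinite-volume, regular structure.

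For (c) I would re-enter the stationarity identities with $\psi(p)=p$ but with $\zeta$ ranging over functions of the positions in an arbitrary finite box $\Lambda$, feeding in that $\rho$ is now known to be $\mc{N}(\bar p,\beta^{-1})$. Regularity forces $\bar\nu$ to have a locally controlled density, legitimizing an integration by parts, and the identity then says that the conditional density of $\bar\nu$ on $\Lambda$ given the exterior has $q$-gradient equal to $-\beta\,\nabla_\bq$ of the potential energy. A closedness (curl-free) argument — the point where I expect the genericity hypotheses on $V,W$ of \cite{FFL} to enter, and where the absence in infinite volume of a summable ``total potential energy'' is essential — then forces $\beta$ to be configuration-constant modulo the tail, and the resulting first-order system identifies the density as the Gibbsian one. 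In the pinned case, substituting this candidate density into the stationarity condition produces a term proportional to $\bar p\sum_x W'(q_x)$, forcing $\bar p\equiv 0$; in the unpinned $d=1$ case one runs the same computation in the deformation variables $r_x$, the tension (related to $\EE[V'(r_x)]$) survives as a free mixing parameter, and the product structure of the one-dimensional gradient Gibbs measures makes the identification of $\bar\nu$ immediate. Step (d) is then bookkeeping of these parameters.

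I expect the real obstacle to be (a)--(b): promoting the a priori $\bq$-dependent, only per-site-consistent ``local temperature'' $T(\bq)$ to a tail-measurable constant, and excluding nontrivial conditional correlations of the momenta at fixed $\bq$. This is where the full strength of the exchangeability hypothesis must be combined with spatial invariance and regularity, and where the generic-potential assumptions of \cite{FFL} cannot be dispensed with. The remaining, more routine difficulties are the justification of the formal manipulations in infinite volume — existence and local regularity of the conditional densities, integrability of the force terms against $\nu$, approximation of unbounded test functions such as $\psi(p)=p$ by elements of $C_0^1({\tilde\Omega})$, and checking that the chosen family of test functions separates the candidate invariant measures.
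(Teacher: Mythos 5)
The paper itself offers no proof of this statement: it is quoted verbatim from \cite{FFL}, and the surrounding text only records the hypotheses and the conclusion. So there is no in-paper argument to compare yours against; what follows is an assessment of your proposal on its own terms, measured against the strategy that \cite{FFL} is known to follow for exactly this theorem (Hewitt--Savage/De Finetti to make the momenta conditionally i.i.d., then the stationarity identity $\int \mc{A}f\,d\nu=0$ to force a Gaussian conditional law and a Gibbsian position marginal). Your skeleton (a)--(d) is that strategy, and the computation of $\mc{A}(\psi(p_x)\zeta(\bq))$ and the resulting moment identities are the correct workhorse; note also that the entropy estimates mentioned in the paper belong to the \emph{noisy} Theorems 1.2--1.3 (where one must first \emph{derive} the exchangeability/flip-invariance of the conditional momentum law), not to this one, so you were right not to reach for them.

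The genuine gap is that the two decisive steps are asserted rather than carried out. First, the passage from ``moment recursions'' to Gaussianity is really an elimination argument: for a.e.\ $\bq$ the stationarity identity couples $\EE_\nu[p\psi(p)\mid\bq]$, $\EE_\nu[\psi'(p)\mid\bq]$, the force $\partial_{q_x}\mc H$, \emph{and} the unknown logarithmic derivative $\partial_{q_x}\log g$ of the position density; only after eliminating the latter between identities for different $\psi$ (or different sites) do you get the Stein-type relation $\EE_\rho[(p-\bar p)\psi(p)]=T\,\EE_\rho[\psi'(p)]$ that characterizes $\mc N(\bar p,T)$, and the elimination is legitimate only when the coefficient functions of $\bq$ are not linearly dependent --- this is precisely where the unspecified genericity hypotheses on $V,W$ do their work, and your proposal only gestures at it. Second, and relatedly, promoting $T(\bq)$ to a tail-measurable constant and closing the first-order system for $\bar\nu$ into the DLR equations is where regularity (the entropy bound $H(\nu|_\Lambda\,|\,\mu|_\Lambda)\le C|\Lambda|$, which gives local densities and integrability of the force terms) must be invoked quantitatively, not just to ``legitimize an integration by parts.'' You correctly flag both obstacles, but flagging them is not resolving them; as written the proposal is a faithful road map of the \cite{FFL} proof rather than a proof.
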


They also proposed to perturb the dynamics by a stochastic noise that consists in exchanging at random exponential times, independently for each pair of nearest neighbors site $x,y \in \ZZ^d$, $|x-y|=1$, the momenta $p_x$ and $p_y$. The formal generator ${\mc L}$ of this dynamics, that we will call the {\textit{stochastic energy-momentum conserving dynamics}},  is given by ${\mc L} ={\mc A} +\gamma {\mc S}$, $\gamma >0$, where ${\mc A}$ is the Liouville operator and ${\mc S}$ is defined for any local function $f: {\tilde \Omega} \to \RR$ by
\begin{equation}
\label{eq:cons-noise-exch-mom}
({\mc S} f)(\bq,\bp) = \sum_{\substack{x,y \in \ZZ^d\\ |x-y|=1}} \left[ f(\bq,\bp^{x,y}) -f(\bq, \bp) \right].
\end{equation}
Here the momenta configuration $\bp^{x,y}$ is the configuration obtained from $\bp$ by exchanging $p_x$ with $p_y$. The previous discussion about existence of the dynamics on ${\tilde \Omega}$ for the deterministic case and its relation with its formal generator is also valid for this dynamics and the other dynamics defined in this section.   

With some non-trivial entropy estimates we get the following result.

\begin{theorem}[\cite{FFL}]
\label{th:1:ergexc} 
Consider the pinned ($W\ne 0$) or the one-dimensional unpinned ($W \ne 0$) stochastic energy-momentum conserving dynamics. The only regular time and space invariant measures for these dynamics are given by mixtures of Gibbs grand canonical ensembles, i.e. the stochastic energy-momentum conserving dynamics is macro-ergodic.
\end{theorem}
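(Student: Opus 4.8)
The plan is to deduce the macro-ergodicity of the noisy dynamics from the purely deterministic Fritz--Funaki--Lebowitz theorem quoted above, by using the fact that the momentum-exchange noise $\mc{S}$ is symmetric and dissipative with respect to every Gibbs grand canonical ensemble, so it forces any $\mc{L}$-stationary measure to be exchangeable in the momenta conditionally on the positions.

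\textbf{Step 1 (Reduction to conditional exchangeability).} Let $\nu$ be a regular, space- and time-invariant probability measure for $\mc{L}=\mc{A}+\gamma\mc{S}$. I claim it suffices to prove that, $\nu$-almost surely, the conditional law of $\bp=\{p_x\}$ given $\bq=\{q_x\}$ is exchangeable. Indeed, granting this, for every bond $|x-y|=1$ and every local $f\in C_0^1(\tilde\Omega)$ one has $\int[f(\bq,\bp^{x,y})-f(\bq,\bp)]\,d\nu=0$, since permuting two momenta does not change the conditional law; summing over the finitely many bonds touching $\Lambda_f$ gives $\int\mc{S}f\,d\nu=0$. Combined with the stationarity identity $\int\mc{L}f\,d\nu=0$ (valid for $f\in C_0^1$ by the Chapman--Kolmogorov equations (\ref{eq:CK1})) this forces $\int\mc{A}f\,d\nu=0$ for all local $f\in C_0^1$, i.e. $\nu$ is time-invariant for the purely Hamiltonian dynamics as well. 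Being in addition space-invariant, regular, and conditionally exchangeable in the momenta, $\nu$ is then a mixture of Gibbs grand canonical ensembles by the deterministic theorem of Fritz, Funaki and Lebowitz stated above. (In the unpinned one-dimensional case $W=0$ this is carried out in the deformation variables $(r_x,p_x)$, where the reference Gibbs measures are product measures.)

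\textbf{Step 2 (Conditional exchangeability from stationarity).} Fix a reference Gibbs measure $\mu$ ($\mu_\beta$ if $W\neq0$, $\mu_{\beta,\bar p,\tau}$ if $W=0$, $d=1$). Recall that $\mu$ is invariant under $\mc{A}$ and that $\mc{A}$ is antisymmetric in $L^2(\mu)$ (Liouville plus conservation of the energy), while $\mu$ is invariant under the exchange of any two momenta, so $\mc{S}$ is symmetric and nonpositive in $L^2(\mu)$. For a box $\Lambda\subset\ZZ^d$ put $f_\Lambda=d(\nu|_\Lambda)/d(\mu|_\Lambda)$; regularity of $\nu$ means $\int f_\Lambda\log f_\Lambda\,d\mu\le C|\Lambda|$ uniformly in $\Lambda$. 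Testing stationarity against $g=\log f_\Lambda$ and using the elementary inequality $(b-a)(\log b-\log a)\ge(\sqrt b-\sqrt a)^2$ together with the exchange-symmetry of $\mu$, one obtains an entropy-production bound of the schematic form
\[
\gamma\,D_\Lambda(\sqrt{f_\Lambda})\;\le\;\Big|\int\mc{A}(\log f_\Lambda)\,d\nu\Big|+O(|\partial\Lambda|),\qquad D_\Lambda(\sqrt f):=\sum_{\substack{|x-y|=1\\ x,y\in\Lambda}}\int\big(\sqrt{f(\bq,\bp^{x,y})}-\sqrt{f(\bq,\bp)}\big)^2\,d\mu .
\]
The first term on the right is a boundary flux: $\log f_\Lambda$ depends only on the coordinates in $\Lambda$, whereas $\mc{A}(\log f_\Lambda)$ also involves the neighbours of $\partial\Lambda$ through the pair potential $V$, so after an integration by parts in $L^2(\mu)$ (moving $\mc{A}$ onto the densities and using $\mc{A}^{*}=-\mc{A}$) all bulk contributions cancel and only terms supported near $\partial\Lambda$ survive. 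Using the uniform bounds on $V''$ and $W''$ (resp. the two-sided bounds on $V''$ when $W=0$), the Cauchy--Schwarz inequality and the regularity bound $\int f_\Lambda\log f_\Lambda\,d\mu\le C|\Lambda|$, one shows both terms on the right are $o(|\Lambda|)$, whence $|\Lambda|^{-1}D_\Lambda(\sqrt{f_\Lambda})\to0$ along a sequence of growing boxes. By translation invariance of $\nu$ and $\mu$, the consistency of the densities $f_\Lambda$, and lower semicontinuity of the Dirichlet forms under conditional expectation, this forces $f(\bq,\bp^{x,y})=f(\bq,\bp)$ for every fixed bond in the infinite-volume limit, i.e. the conditional law of $\bp$ given $\bq$ under $\nu$ is exchangeable. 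This closes Step 1 and hence proves the theorem.

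\textbf{The main obstacle.} Everything stochastic here is soft: the noise enters only through the fact that $\mc{S}$ is symmetric and dissipative with respect to the Gibbs measures. The genuine work — the ``non-trivial entropy estimates'' alluded to before the statement — is the control of the Hamiltonian contribution $\int\mc{A}(\log f_\Lambda)\,d\nu$ in infinite volume: $\mc{A}$ is an unbounded first-order operator and $\log f_\Lambda$ is neither bounded nor global, so one must show that the flux it produces across $\partial\Lambda$ is negligible compared with $|\Lambda|$, uniformly in $\Lambda$, using only the regularity of $\nu$ and the qualitative hypotheses on $V$ and $W$. This is where those assumptions are actually consumed, and it is also why the argument does not ``see'' the non-ergodicity of the harmonic or Toda lattices: the exchange noise destroys the extra conservation laws responsible for it.
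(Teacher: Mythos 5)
The paper itself gives no proof of this theorem: it is quoted from Fritz--Funaki--Lebowitz with the single remark that it follows ``with some non-trivial entropy estimates,'' so there is nothing in the text to compare against line by line. Your outline is exactly the intended route --- use the dissipativity of the exchange noise to force conditional exchangeability of the momenta given the positions, deduce $\int \mathcal{S}f\,d\nu=0$ and hence $\int\mathcal{A}f\,d\nu=0$, and invoke the deterministic FFL theorem quoted just above --- and Step 1 is complete and correct. The only gap is the one you yourself flag: the bound $\int\mathcal{A}(\log f_\Lambda)\,d\nu = o(|\Lambda|)$ in Step 2 is asserted rather than proved, and that estimate (an unbounded first-order operator acting on a non-global, non-bounded $\log f_\Lambda$, controlled only by the specific-entropy bound and the hypotheses on $V,W$) is precisely the ``non-trivial entropy estimates'' the paper defers to \cite{FFL}; as written your argument is a correct reduction plus an honest placeholder for that technical core.
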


Consequently the stochastic energy-momentum conserving dynamics is macro-ergodic. By using the relative entropy method developed in \cite{OVY},  one can show it has in the Euler time scale and before the appearance of the shocks the same hydrodynamics (\ref{eq:Euler-general0}) as the deterministic model. This is because the noise has some macroscopic effects only in the diffusive time scale (\cite{BO-livre1}). 

We consider now a different stochastic perturbation. Let us define the flipping operator $\sigma_x: \bp \in \Omega \to {\bp}^x \in \Omega$ where ${\bp}^x$ is the configuration such that $({\bp}^x)_z =p_z$ for $z \ne x$ and $({\bp}^x)_x =-p_x$.   In \cite{FFL} is also proved that the only time-space regular stationary measures for the Liouville operator ${\mc A}$ such that conditionally to the positions the momenta distribution is invariant by any flipping operator $\sigma_{x}$ are mixtures of Gibbs grand canonical ensembles with zero momentum average. Then we consider the dynamics on ${\tilde \Omega}$ generated by ${\mc L} ={\mc A} +\gamma {\mc S}$, $\gamma >0$, with ${\mc S}$ the noise defined by
\begin{equation}
\label{eq:cons-noise-flip}
({\mc S} f)(\bq, \bp)= \cfrac{1}{2} \sum_{x \in \ZZ^d} \left[ f(\bq, \bp^x) -f(\bq,\bp)\right]
\end{equation}
for any local function $f: {\tilde \Omega} \to \RR$. This dynamics conserves the energy and the deformation of the lattice but destroys all the other conserved quantities. We call this system the \textit{velocity-flip model} (sometimes the \textit{stochastic energy conserving model}). 

\begin{theorem}[\cite{FFL}]
\label{th:1:ergflip} 
Consider the pinned $d$-dimensional velocity-flip model or the one-dimensional unpinned velocity-flip model.  The only regular time and space invariant measures are given by mixtures of Gibbs grand canonical ensembles. In other words the velocity-flip model is macro-ergodic.
\end{theorem}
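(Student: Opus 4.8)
The plan is to reduce the statement to the characterization, quoted above from \cite{FFL}, of the space-time invariant regular measures of the Liouville operator ${\mc A}$ whose conditional momentum law is flip-invariant. Let $\nu$ be a regular, space and time invariant probability measure for ${\mc L}={\mc A}+\gamma{\mc S}$, with ${\mc S}$ the velocity-flip noise \eqref{eq:cons-noise-flip}. The key claim is that, under $\nu$, conditionally on the position configuration $\bq$, the law of the momenta $\bp$ is invariant under every flip $\sigma_x$. Granting this, for any local $f\in C_0^1({\tilde\Omega})$ each term of $\int ({\mc S}f)\, d\nu=\tfrac12\sum_x\int[f(\bq,\bp^x)-f(\bq,\bp)]\, d\nu$ vanishes, since $\bp\mapsto\bp^x$ leaves $\nu$ invariant; hence $\int({\mc A}f)\, d\nu=\int({\mc L}f)\, d\nu-\gamma\int({\mc S}f)\, d\nu=0$, so $\nu$ is also time invariant for ${\mc A}$, and it is still regular and space invariant. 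The conditional flip-invariance is then exactly the hypothesis of the cited result of \cite{FFL} for ${\mc A}$, which identifies $\nu$ as a mixture of Gibbs grand canonical ensembles; since ${\mc S}$ annihilates only those with zero average momentum, these are the ones that can occur, and the theorem follows.

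To prove the conditional flip-invariance I would run a relative-entropy (Dirichlet form) argument in the spirit of \cite{FFL}. Fix a reference Gibbs grand canonical ensemble $\mu$ with zero average momentum; it is invariant for both ${\mc A}$ and ${\mc S}$, with ${\mc A}$ antisymmetric in $L^2(\mu)$ and ${\mc S}$ symmetric and nonpositive in $L^2(\mu)$. For a finite box $\Lambda\subset\ZZ^d$ let $f_\Lambda$ be the density of $\nu|_\Lambda$ with respect to $\mu|_\Lambda$; regularity of $\nu$ gives the a priori bound $H(\nu|_\Lambda\,|\,\mu|_\Lambda)=\int f_\Lambda\log f_\Lambda\, d\mu|_\Lambda\le C|\Lambda|$. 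Since $\nu$ is stationary for $(P_t)$, the finite-box relative entropy of $\nu P_t$ with respect to $\mu$ is constant in $t$; differentiating at $t=0$, and using that the Hamiltonian part ${\mc A}$, conserving both the Lebesgue measure and the local energy, contributes only a boundary current to the entropy production while the noise contributes the negative bulk term $-\gamma\,\mathcal{D}_\Lambda(\sqrt{f_\Lambda})$ with
\begin{equation*}
\mathcal{D}_\Lambda(\sqrt{f_\Lambda})=\frac12\sum_{x\in\Lambda}\int\Big(\sqrt{f_\Lambda(\bq,\bp^x)}-\sqrt{f_\Lambda(\bq,\bp)}\Big)^2\, d\mu|_\Lambda ,
\end{equation*}
one obtains $\gamma\,\mathcal{D}_\Lambda(\sqrt{f_\Lambda})\le R_\Lambda$, where $R_\Lambda$ collects the boundary terms. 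The second input is that $R_\Lambda=o(|\Lambda|)$, obtained from the entropy bound together with the standing assumptions on $V$ and $W$ (bounded second derivatives, or $W=0$ and $V''$ bounded above and below). Dividing by $|\Lambda|$, letting $\Lambda\uparrow\ZZ^d$, and using the translation invariance of $\nu$ to conclude that the per-site noise Dirichlet form of the infinite-volume density vanishes, one gets that flipping any single $p_x$ does not change the density, i.e. the conditional law of $\bp$ given $\bq$ is $\sigma_x$-invariant for every $x$, as claimed. In the one-dimensional unpinned case one carries out the same computation in the deformation-field coordinates, where the Gibbs measures are products, which only simplifies the estimates.

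The genuine difficulty is the second step above: quantifying the boundary error $R_\Lambda$. Because $\nu|_\Lambda$ is not stationary for any bona fide finite-volume dynamics — the evolution couples $\Lambda$ to its exterior — and because ${\mc A}$ carries unbounded coefficients (the forces $\partial_{q_x}{\mc H}$), controlling the entropy leaking through $\partial\Lambda$ requires the non-trivial entropy estimates already alluded to after Theorem \ref{th:1:ergexc}; this is precisely where both the regularity of $\nu$ and the global bounds on $V,W$ are used in an essential way. Everything else is either soft (the reduction in the first paragraph) or a direct appeal to the quoted theorem of \cite{FFL}; see \cite{BO-livre1} for the details and for the adaptation to the unpinned one-dimensional chain.
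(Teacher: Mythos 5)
Your proposal is correct and follows essentially the route the paper itself indicates: the paper gives no self-contained proof but cites \cite{FFL}, having just stated the two ingredients you use — the characterization of regular space-time invariant measures for ${\mc A}$ whose conditional momentum law is invariant under every flip $\sigma_x$, and the ``non-trivial entropy estimates'' that force the noise Dirichlet form of a stationary regular measure to vanish. Your reduction (vanishing Dirichlet form $\Rightarrow$ conditional flip-invariance $\Rightarrow$ $\nu$ is ${\mc A}$-invariant $\Rightarrow$ apply the quoted result) is exactly the argument of \cite{FFL}, and you correctly locate the real difficulty in the control of the boundary entropy-production terms.
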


Since the velocity flip-model does not conserve the momentum its Gibbs invariant measures are given by (\ref{eq.Gibbsmeasures}) with ${\bar p}=0$. In particular the average currents with respect to theses measures is zero. Therefore assuming propagation of local equilibrium in the Euler time scale we get that it has trivial hydrodynamics in this time scale: initial profile of energy does not evolve. This is only in the diffusive scale that an evolution should take place. 

\subsubsection{NESS of chains of oscillators perturbed by an energy conserving noise}

The models defined in the previous sections can also be considered in a non-equilibrium stationary state (NESS) by letting them in contact with thermal baths at different temperatures and imposing various mechanical boundary conditions. Let us only give some details for the NESS of the one-dimensional velocity-flip model.  

Consider a chain of $N$ unpinned oscillators where the particle $1$ (resp. $N$) is subject to a constant force $\tau_\ell$ (resp. $\tau_r$). Moreover we assume that the particle $1$ (reps. $N$) is in contact with a Langevin thermal bath at temperature $T_{\ell}$ (resp. $T_r$). The generator ${\mc L}_N$ of the dynamics on the phase space $\Omega_N=\RR^{N-1} \times \RR^N$ is given by
\begin{equation}
\label{eq:1:ness-gen}
{\mc L}_N= {\mc A}^{\tau_\ell, \tau_r}_N +\gamma {\mc S}_N + \gamma_\ell {\mc B}_{1,T_\ell} + \gamma_r {\mc B}_{N,T_r}, \quad \gamma>0,
\end{equation}
where ${\mc A}^{\tau_\ell, \tau_r}_{N}$ is the Liouville operator, 
${\mc B}_{j,T}$ the generator of the Langevin bath at temperature $T$
acting on the $j$--th particle and ${\mc S}_N$ the generator of the
noise. The strength of noise and thermostats are regulated by
$\gamma$, $\gamma_{\ell}$ and $\gamma_r$ respectively. The Liouville operator is
defined by 
\begin{equation}
  \label{eq:ss51}
   \begin{split}
    {\mathcal A }^{\tau_\ell, \tau_r}_{N}= 
    \sum_{x=2}^{N} \left(p_{x} - p_{x-1}\right) \partial_{r_x} +
        \sum_{x=2}^{N-1}\left(V'(r_{x+1}) - V'(r_{x})\right)
      \partial_{p_{x}}\\
    - \left(\tau_\ell - V'(r_2)\right) \partial_{p_1} 
    + \left(\tau_r - V'(r_N)\right) \partial_{p_N}.
  \end{split}
\end{equation}
The generators of the thermostats are given by
\begin{equation}
  \label{eq:ss49}
  {\mc B}_{j,T} = T \partial_{p_j}^2 - p_j \partial_{p_j}.
\end{equation}
The noise corresponds to independent velocity change of sign, i.e.
\begin{equation}
\label{eq:ss41}
({\mc S}_N f ) (\br,\bp) = \frac{1}{2} \sum_{x=2}^{N-1} \left(f(\br,\bp^{x}) - f(\br,\bp)\right), \quad f: \Omega_N \to \RR .
\end{equation}
We will also consider the case where the chain has fixed boundary conditions. 

\begin{proposition}[\cite{BO1}, \cite{BO2},\cite{Car}]
Consider a finite chain of pinned or unpinned oscillators with fix, free or forced boundary conditions in contact with two thermal baths at different temperatures and perturbed by one of the energy conserving noises defined above. Then, there exists a unique non-equilibrium stationary state for this dynamics which is absolutely continuous w.r.t. Lebesgue measure. 
\end{proposition}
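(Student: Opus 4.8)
The process $(\br(t),\bp(t))_{t\ge 0}$ generated by ${\mc L}_N$ is a Feller--Markov process on $\Omega_N=\RR^{N-1}\times\RR^N$, and the statement splits into three tasks: existence of an invariant probability measure, its absolute continuity, and its uniqueness. The plan is to get existence from a Lyapunov drift condition together with Krylov--Bogolyubov, absolute continuity from H\"ormander's hypoellipticity theorem applied through the two Langevin baths, and uniqueness by combining the resulting strong Feller property with an irreducibility statement obtained from a controllability argument, following the scheme developed for chains of oscillators coupled to thermal reservoirs by Eckmann--Pillet--Rey-Bellet and Rey-Bellet--Thomas (and adapted to the present noises in the cited works).

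\textbf{Existence.} First I would look for a coercive $\Theta\colon\Omega_N\to[1,\infty)$ (with compact sublevel sets) satisfying ${\mc L}_N\Theta\le -c\,\Theta+b$ for constants $c>0$, $b<\infty$. Starting from $\Theta_0=\exp(\theta\,{\mc H}_N)$, a direct computation gives ${\mc S}_N{\mc H}_N=0$ (the velocity flips conserve the energy), ${\mc A}^{\tau_\ell,\tau_r}_N{\mc H}_N$ equal to a boundary term, and $(\gamma_\ell{\mc B}_{1,T_\ell}+\gamma_r{\mc B}_{N,T_r}){\mc H}_N=\gamma_\ell(T_\ell-p_1^2)+\gamma_r(T_r-p_N^2)$, so that the energy is dissipated only through the two end velocities $p_1$ and $p_N$. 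To turn this boundary dissipation into a genuine drift for the whole energy one must add to ${\mc H}_N$ a small position--momentum ``twisting'' term accounting for the transport of energy toward the ends; this is where the growth and convexity hypotheses on $V$ (and on $W$ in the pinned case) are used, and it yields a modified $\Theta$ obeying the drift inequality. Coercivity of $\Theta$ then makes the time-averaged occupation measures tight, and Krylov--Bogolyubov produces an invariant probability measure $\mu$.

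\textbf{Absolute continuity.} Write ${\mc L}_N={\mc X}_0+\gamma_\ell T_\ell\,\partial_{p_1}^2+\gamma_r T_r\,\partial_{p_N}^2+\gamma\,{\mc S}_N$, where ${\mc X}_0$ collects all first-order terms. The second-order part acts only along $\partial_{p_1}$ and $\partial_{p_N}$, but iterated Lie brackets with ${\mc X}_0$ fill in the missing directions along the chain: $[\partial_{p_1},{\mc X}_0]$ produces $\partial_{r_2}$, then $[\partial_{r_2},{\mc X}_0]$ produces $V''(r_2)\,\partial_{p_2}$, hence $\partial_{p_2}$ since $V''\neq 0$, then $[\partial_{p_2},{\mc X}_0]$ produces $\partial_{r_3}$, and so on, symmetrically from the right end, until all $2N-1$ coordinate directions are reached. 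Thus H\"ormander's bracket condition holds at every point, the diffusion generated by ${\mc X}_0+\gamma_\ell T_\ell\,\partial_{p_1}^2+\gamma_r T_r\,\partial_{p_N}^2$ is hypoelliptic with smooth transition densities, and superposing the bounded jump part $\gamma\,{\mc S}_N$, whose jumps act by the momentum-reflection diffeomorphisms $\bp\mapsto\bp^x$, preserves this: conditioning on the number and the (absolutely continuous) times of the jumps shows that $P_t(x,\cdot)$ stays absolutely continuous with respect to Lebesgue measure for all $t>0$ and all $x$. Since $\mu(A)=\int P_t(x,A)\,\mu(dx)$, any invariant measure is absolutely continuous.

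\textbf{Uniqueness and the main difficulty.} Hypoellipticity also yields the strong Feller property. It then remains to prove irreducibility, i.e.\ that $P_t(x,O)>0$ for every nonempty open $O$ and every $x$; by the Stroock--Varadhan support theorem this reduces to a deterministic controllability question for the ODE obtained by replacing the two bath noises by controls acting on $\dot p_1$ and $\dot p_N$ (and choosing the jump times of ${\mc S}_N$ freely), and this control system is controllable for the same reason the bracket condition holds, namely $V''\neq 0$ along the chain. Strong Feller, irreducibility and the existence of $\mu$ then force uniqueness by Doob's theorem (equivalently, by Harris' theorem together with the Lyapunov function). Fixed, free or forced boundary conditions only modify ${\mc A}_N$ by boundary terms and do not affect any of the steps. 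The genuine obstacle is the construction of the Lyapunov function: the sole dissipation mechanism is the two reservoirs at the ends, the flip noise being energy-preserving and the forces $\tau_\ell,\tau_r$ injecting energy, so one must show that this end dissipation dominates on high-energy shells, which is precisely what forces the position--momentum cross terms and uses the detailed assumptions on $V$; the hypoellipticity and controllability steps, though requiring some bookkeeping along the chain, are comparatively routine.
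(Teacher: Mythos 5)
Your proposal follows essentially the same route as the paper's (very brief) proof sketch: existence via a Lyapunov function, absolute continuity and the strong Feller property via hypoellipticity with a conditioning argument to handle the jump noise, and uniqueness via irreducibility obtained from control theory. Your version simply fills in the details of each of these steps, and correctly identifies the construction of the Lyapunov function (turning the purely boundary dissipation into a global drift) as the delicate point.
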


\begin{proof}
The proof of the existence of the invariant state can be obtained from the knowledge of a suitable Liapounov function. To prove the uniqueness of the invariant measure it is sufficient to prove that the dynamics is irreducible and has the strong-Feller property. Some hypoellypticity, control theory and conditioning arguments are used to achieve this goal. 
\cqfd
\end{proof}

\subsection{Simplified perturbed Hamiltonian systems}
\label{sec:intro-phd}

Introducing a noise into the deterministic dynamics help us to solve some ergodicity problems. Nevertheless, as we will see, several challenging problems remain open for chains of oscillators perturbed by a conservative noise. In \cite{BS} we proposed to simplify still these models and the main message addressed there is that the models introduced in \cite{BS} have qualitatively the same behaviors as the unpinned chains. For simplicity we define only the dynamics in infinite volume. 

Let $U$ and $V$ be two potentials on $\RR$ and consider the Hamiltonian system $(\omega (t) )_{t \ge 0} = ( \, {\br} (t) , {\bp} (t) \,)_{t \ge 0}$ described by the equations of motion
\begin{equation}
\label{eq:1:generaldynamics}
\frac{dp_x}{dt} = V'(r_{x+1}) -V'(r_x), 
\qquad \frac{dr_x}{dt} = U' (p_x) -U' (p_{x-1}), 
\qquad x \in \ZZ, 
\end{equation}
where $p_x$ is the momentum of particle $x$, $q_x$ its position and $r_x=q_{x} -q_{x-1}$ the ``deformation''. Standard chains of oscillators are recovered for a quadratic kinetic energy $U(p)=p^2 /2$. The dynamics conserves (at least) three physical quantities: the total momentum $\sum_{x} p_{x}$, the total deformation $ \sum_{x} r_{x}$ and the total energy $\sum_x {\mc E_x}$ with ${\mc E}_x= V(r_x) + U(p_x)$. Consequently, every Gibbs grand canonical ensemble ${\nu}_{\beta,\lambda, \lambda'}$ defined by
\begin{equation}
\label{eq:invmeas007}
d{\nu}_{\beta,\lambda,\lambda'} (\eta) = \prod_{x \in \ZZ} {\mc Z} (\beta,\lambda,\lambda')^{-1} 
\exp\left\{ -\beta {\mc E}_x -\lambda p_{x} -\lambda' r_{x} \right\} \, d r_x \, d p_x 
\end{equation}
is invariant under the evolution.  To simplify we assume that the potentials $U$ and $V$ are smooth potentials with second derivatives bounded by below and from above.

To overcome our ignorance about macro-ergodicity of the dynamics, as before, we add a stochastic conserving perturbation. In the general case $U \ne V$, the Hamiltonian dynamics can be perturbed by the energy-momentum conserving noise acting on the velocities (as proposed in \cite{FFL}) but conserving the three physical invariants mentioned above. Then the infinite volume dynamics can be defined on the state space ${\tilde \Omega}$. Its generator ${\mc L}$ is given by ${\mc L} = {\mc A} +\gamma {\mc S}$, $\gamma >0$, where
\begin{equation}
\label{eq:1:ASdynum}
\begin{split}
&({\mc A} f) (\br ,\bp) =\sum_{x \in \ZZ} \left\{\,  (V'(r_{x+1}) -V'(r_x)) \partial_{p_x} f\,  + \,  (U' (p_x) -U' (p_{x-1})) \partial_{r_x} f \right\}\,  (\br,\bp)\\
& ({\mc S} f) = \sum_{x \in \ZZ} \left[ f(\br, \bp^{x,x+1}) - f (\br, \bp) \right] 
\end{split}
\end{equation} 
for any $f \in C_0^1 ({\tilde \Omega})$.

\begin{theorem}[\cite{BS}]
Assume that the potentials $U$ and $V$ are smooth potentials with second derivatives bounded by below and from above. The dynamics generated by ${\mc L}={\mc A} +\gamma {\mc S}$ with $\gamma>0$ and ${\mc A}, {\mc S}$ given by (\ref{eq:1:ASdynum})  is macro-ergodic. Consequently, before the appearance of the shocks, in the Euler time scale, the hydrodynamic limits are given by a triplet of compressible Euler equations.
\end{theorem}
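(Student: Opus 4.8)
The statement has two halves and the second follows from the first: granting macro-ergodicity, the compressible Euler system (\ref{eq:Euler-general0}) --- now with the pressure $\tau$ read off from the thermodynamics of the pair $(U,V)$ through the measures (\ref{eq:invmeas007}) --- is obtained on the Euler time scale, before shocks, by the relative entropy method of Olla--Varadhan--Yau \cite{OVY}, exactly as for the noisy chains of \cite{BO-livre1}; the fixed noise strength $\gamma$ is invisible on this scale. So the plan is to prove macro-ergodicity. Let $\nu$ be a regular, space- and time-translation invariant probability measure on $\tilde\Omega$ with $\int {\mc L} f\,d\nu=0$ for all $f\in C_0^1(\tilde\Omega)$; the goal is to show that $\nu$ is a mixture of the measures $\nu_{\beta,\lambda,\lambda'}$ of (\ref{eq:invmeas007}).

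\textbf{Step 1: entropy production forces exchangeable momenta.} First I would take as reference the product Gibbs measure $\mu=\nu_{1,0,0}$. Because $\mu$ factorises over sites, it is invariant under every nearest-neighbour exchange of momenta, so ${\mc S}$ is symmetric in $L^2(\mu)$, while ${\mc A}$ is antisymmetric there since $\mu$ is invariant for the Liouville flow. Regularity gives that the relative entropy of $\nu|_\Lambda$ with respect to $\mu|_\Lambda$ is at most $C|\Lambda|$ on any box $\Lambda$ exhausting $\ZZ$, and the standard stationarity/entropy-production computation (\cite{FFL}; see also \cite{BO-livre1}) bounds $\gamma$ times the noise Dirichlet form
\[
\sum_{\substack{x,y\in\Lambda\\ |x-y|=1}}\int\Big(\sqrt{f_\Lambda(\br,\bp^{x,y})}-\sqrt{f_\Lambda(\br,\bp)}\Big)^{2}\,d\mu ,
\]
where $f_\Lambda$ is the density of $\nu|_\Lambda$ with respect to $\mu|_\Lambda$, by boundary terms that are $o(|\Lambda|)$. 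Letting $\Lambda\uparrow\ZZ$ forces this Dirichlet form to vanish, i.e. $\nu$ is invariant under every nearest-neighbour momentum transposition; since these generate all finite permutations, conditionally on the deformation field $\br$ the momenta $\bp$ are exchangeable.

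\textbf{Step 2: exchangeability plus Liouville stationarity forces Gibbs.} Next I would use $\int {\mc A} f\,d\nu=0$. Testing against $f=\phi(p_x)\,\psi$ and $f=\phi(r_x)\,\psi$, with $\psi$ a function of the remaining coordinates, and substituting the equations of motion $\dot p_x=V'(r_{x+1})-V'(r_x)$, $\dot r_x=U'(p_x)-U'(p_{x-1})$ of (\ref{eq:1:generaldynamics}), the invariance identities combined with Step 1 turn into functional equations for the conditional densities of $\nu$; solving them forces the conditional law of $p_x$ given the other variables to have density proportional to $\exp(-\beta U(p_x)-\lambda p_x)$ and that of $r_x$ proportional to $\exp(-\beta V(r_x)-\lambda' r_x)$, with $(\beta,\lambda,\lambda')$ measurable with respect to the tail $\sigma$-field. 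Translation invariance and a zero--one law then make $(\beta,\lambda,\lambda')$ almost surely constant on each ergodic component, and a DLR / integration-by-parts argument identifies $\nu$ on such a component with $\nu_{\beta,\lambda,\lambda'}$. This is exactly the analogue, for the general kinetic energy $U$, of the Fritz--Funaki--Lebowitz characterisation recalled above, and it gives macro-ergodicity; the hydrodynamic conclusion then follows as explained in the first paragraph.

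\textbf{The main obstacle.} The entropy computation of Step 1 is robust and completely insensitive to the shape of $U$. The real work is Step 2: one must verify that the whole Fritz--Funaki--Lebowitz scheme survives when $p^2/2$ is replaced by a general strictly convex $U$ --- deriving and solving the functional equations for the conditional densities, securing enough a priori regularity of those densities (this is where the bounds $0<c\le U''\le C$ and $0<c\le V''\le C$ enter: the lower bounds to control the densities and guarantee closability of the noise Dirichlet form, the upper bounds to ensure integrability of $U'$ and $V'$), and running the tail-triviality step. Carrying this adaptation through is precisely the content of \cite{BS}.
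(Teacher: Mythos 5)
The paper itself gives no proof of this theorem --- it is quoted from \cite{BS} --- but your outline is exactly the strategy that the surrounding text attributes to that reference and to the analogous Theorems \ref{th:1:ergexc} and \ref{th:1:ergflip}: an entropy-production bound forcing the noise Dirichlet form to vanish (hence conditional exchangeability of the momenta), followed by a Fritz--Funaki--Lebowitz-type characterisation adapted to the general kinetic energy $U$, with the Euler limit then obtained from macro-ergodicity via the relative entropy method of \cite{OVY}. Your identification of where the genuine work lies (redoing the FFL functional-equation/regularity analysis for general strictly convex $U$, which is where the two-sided bounds on $U''$ and $V''$ enter) is accurate, so the proposal is correct in approach and essentially the same as the intended one.
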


Our motivation being to simplify as much as possible the dynamics considered in~\cite{BBO1,BBO2} without destroying the anomalous behavior of the energy diffusion, we mainly focus on the symmetric case $U=V$. Then the $\bp$'s and $\br$'s play a symmetric role so there is no reason that momentum conservation is more important than deformation conservation. We propose thus to add a noise conserving only the energy and $\sum_{x} [ r_x + p_x]$.  It is more convenient to use the variables $\{ \eta_x \, ; \, x \in \ZZ \} \in \RR^{\ZZ}$ defined by $\eta_{2x}= p_x$ and $\eta_{2x -1} =r_x$ so that (\ref{eq:1:generaldynamics}) becomes
\begin{equation}
\label{eq:1:dyneq}
d\eta_x = \left[ V' (\eta_{x+1}) -V' (\eta_{x-1}) \right] dt, \quad x \in \ZZ.
\end{equation}
We might also interpret the dynamics for the $\eta$'s as the dynamics of an interface whose height (resp. energy) at site $x$ is $\eta_x$ (resp. $V(\eta_x)$). It is then quite natural to call the quantity $\sum_x \eta_x$ the ``volume".

Hence, we introduce the so-called {\textit{stochastic energy-volume  conserving dynamics}}, which is still described by (\ref{eq:1:dyneq}) between random exponential times where two nearest neighbors heights $\eta_x$ and $\eta_{x+1}$ are exchanged. Observe that in the momenta-deformation picture this noise is less degenerate than the momenta exchange noise since exchange between momenta and positions is now allowed. The generator ${\mc L}$ of the infinite volume dynamics, well defined on the state space $\Omega$, is given by ${\mc L} ={\mc A} +\gamma {\mc S}$, $\gamma>0$, where for any $f \in C_0^1 (\Omega)$, 
\begin{equation}
\label{eq:1:ASdynum2}
\begin{split}
& ({\mc A}f)(\eta) = \sum_{x \in \ZZ} \left[ V' (\eta_{x+1}) -V' (\eta_{x-1}) \right] (\partial_{\eta_x} f )(\eta),\\
& ({\mc S}f)(\eta) = \sum_{x \in \ZZ} \left[ f(\eta^{x,x+1}) -f(\eta)  \right].
\end{split}
\end{equation}

The noise still conserves the total energy and the total volume but destroys the conservation of momentum and deformation. Therefore, only two quantities are conserved and the invariant Gibbs grand canonical measures of the stochastic dynamics correspond to the choice $\lambda=\lambda'$ in~(\ref{eq:invmeas007}). We denote ${\nu}_{\beta,\lambda,\lambda}$ (resp. ${\mc Z}(\beta,\lambda,\lambda)$) by $\mu_{\beta, \lambda}$ (resp. $Z (\beta,\lambda)$).

%
%

\section{Normal diffusion}
\label{ch:normal}

Normal diffusion of energy in purely deterministic homogeneous chains of oscillators is expected to hold in high dimension ($d\ge 3$) or if momentum is not conserved, i.e. in the presence of a pinning potential. The problem of anomalous diffusion will be discussed in the next chapter. In this chapter we consider the case of normal diffusion.

The first step to show such normal behavior is to prove that the transport coefficient, the thermal conductivity, is well defined.  Once it has been achieved, the following non-equilibrium problems can be considered:
\begin{itemize}
\item Hydrodynamic limits in the diffusive time scale $t \ve^{-2}$, $\ve$ being the scaling parameter: if the system has trivial hydrodynamics in the time scale $t \ve^{-1}$, i.e. if momentum is not conserved, we would like to show that in the diffusive time scale, the macroscopic energy profile evolves according to a diffusion equation. If the system has non-trivial hydrodynamics given by the Euler equations in the hyperbolic scaling (i.e. if momentum is conserved), in the diffusive time scale, we would like to derive the incompressible Navier-Stokes equations. These would be obtained by starting with some initial momentum macroscopic profile of order ${\mc O}(\ve)$ but an energy profile of order ${\mc O} (1)$.  
\item Validity of Fourier's law: we consider the NESS of the system in contact at the boundaries with thermal baths at different temperatures. Fourier's law expresses that the average of the energy current in the NESS is proportional to the gradient of the local temperature. The proportionality coefficient is called the thermal conductivity.
\end{itemize}

Assume for simplicity that $d=1$ and that the energy is the only conserved quantity. The corresponding microscopic current, denoted by  $j^{e}_{x,x+1}$, is defined by the local energy conservation law
\begin{equation}
{\mc L} {\mc E}_x = - \nabla j^{e}_{x-1,x}
\end{equation}
where ${\mc L}$ is the generator of the infinite dynamics under investigation and $\nabla$ is the discrete gradient defined for any $(u_x)_x \in \RR^{\ZZ}$ by $\nabla u_x = u_{x+1} -u_x$. In the current state of the art, in all the problems mentioned above, the usual approach consists to prove that there exist functions $\varphi_x =\theta_x \varphi_0$ and $h_x =\theta_x h_0$ (actually only approximate solutions are needed) such that the following decomposition
\begin{equation}
\label{eq:2:fluc-diss}
j^e_{x,x+1} = \nabla \varphi_x +{\mc L} h_x
\end{equation}
holds. Here $\theta_x$ denotes the shift by $x \in \ZZ^d$. Equation (\ref{eq:2:fluc-diss}) is called a {\textit{microscopic fluctuation-dissipation equation}}. Then, taking arbitrary large integer $\ell \ge 1$, by using a multi-scale analysis we replace the block averaged function $\tfrac{1}{2\ell +1} \sum_{|y-x| \le \ell} \nabla \varphi_y$ by $D({\mc E}^{\ell}_x) \nabla {\mc E}^\ell_x$ where the function  $D$ is identified to a diffusion coefficient which depends on the empirical energy ${\mc E}_x^\ell = \tfrac{1}{2\ell +1} \sum_{|y-x| \le \ell} {\mc E}_y$ in the mesoscopic box of length $(2\ell +1)$ centered around $x$. Intuitively, ${\mc L} h_x$ represents rapid fluctuation (integrated in time, it is a martingale) and the term $\nabla \varphi_x$ represents the dissipation. Gradient models are systems for which the current is equal to the gradient of a function ($h_x=0$ with the previous notations).

There are at least two reasons for which the problems listed above are difficult:
\begin{itemize}
\item The existence of a microscopic fluctuation-dissipation equation has been given for the first time for reversible systems. It has been extended to asymmetric systems satisfying a {\textit{sector condition}}. Roughly speaking this last condition means that the antisymmetric part of the generator is a bounded perturbation of the symmetric part of the generator {\footnote{The antisymmetric (resp. symmetric) part of the generator ${\mc L}$ is given by $\tfrac{{\mc L}-{\mc L}^*}{2}$ (resp. $\tfrac{{\mc L}+{\mc L}^*}{2}$) where ${\mc L}^*$ is the adjoint of ${\mc L}$ in $\bb L^2 (\mu)$, $\mu$ being any Gibbs grand canonical measure. For the models considered in this course, the antisymmetric part is ${\mc A}$ and due to the deterministic dynamics, the symmetric part is ${\mc S}$ and due to the noise. }}.  Later this condition has been relaxed into the so-called {\textit{graded sector condition}}: there exists a gradation of the space where the generator is defined and the asymmetric part is bounded by the symmetric part on each graded part (see \cite{KLO-book}, \cite{HIToth} and references therein). The Hamiltonian systems perturbed by a noise are non-reversible and since the noise (the symmetric part of the generator) is very degenerate, none of these conditions hold. 
\item The system evolves in a non compact space and one needs to show that energy cannot concentrate on a site. This technical problem turns out to be difficult since no general techniques are available. For deterministic nonlinear chains the bounds on the average energy moments are usually polynomial in the size $N$ of the system. Typically we need bounds of order one with respect to $N$.
\end{itemize}

\subsection{Anharmonic chain with velocity-flip noise}
\subsubsection{Linear response theory: Green-Kubo formula}
\label{subsec:lrGK}
The Green-Kubo formula is one of the most important formulas of non-equilibrium statistical mechanics. In the two problems mentioned in the introduction of the chapter (hydrodynamic limits and Fourier's law) the limiting objects are defined via some macroscopic coefficients which can be expressed by a Green-Kubo formula. The latter is a formal expression and showing that it is indeed well defined is a difficult problem. It is usually introduced in the context of the linear response theory that we describe below.

Consider a one dimensional unpinned chain of $N$ harmonic oscillators with forced boundary conditions and perturbed by the velocity-flip noise. The two external constant forces are denoted by  $\tau_\ell$ and $\tau_r$. Furthermore on the boundary particles $1$ and $N$, Langevin thermostats are acting at different temperature $T_\ell=\beta_{\ell}^{-1}$ and $T_r=\beta_r^{-1}$. The generator ${\mc L}_N$ of the dynamics is given by (\ref{eq:1:ness-gen}) and we denote the unique non-equilibrium stationary state by $\mu_{\rm{ss}}$. The expectation w.r.t. $\mu_{\rm{ss}}$ is denoted by $\langle \cdot \rangle_{ss}$.

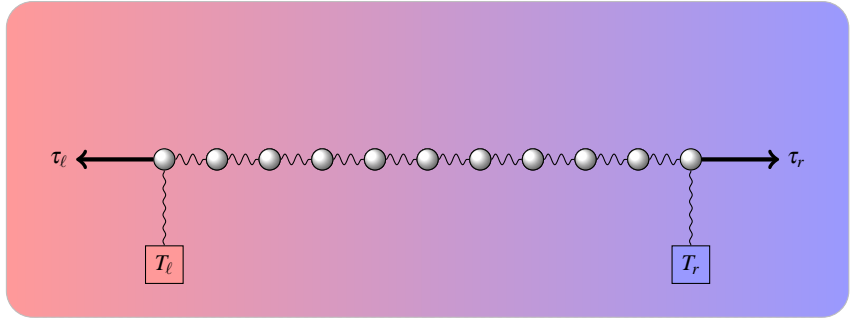
\begin{figure}[htbp]
\centering
\begin{tikzpicture}[scale=0.7]
\filldraw[gray!50,left color=red!40, right color=blue!40,rounded corners=10pt] 
(0,-3)--(13,-3)--(13,3)--(-0,3)--(-3,3)--(-3,-3)--(0,-3) ;

\tikzstyle{atome}=[draw,circle,ball color=white];
\foreach \y in {0,...,9}{
\node[atome] (\y) at (\y,0) {};
\node[atome] (\y+1) at (\y+1,0) {};
}
\foreach \y in {0,...,9}{
\draw [decorate,decoration={snake,amplitude=2pt,segment length=5pt}] (\y) -- (\y+1);
}
\node [draw,rectangle, minimum width=0.5cm, minimum height=0.5cm,fill=red!40] (bath-left) at (0,-2){$T_\ell$};
\node [draw,rectangle, minimum width=0.5cm, minimum height=0.5cm,fill=blue!40] (bath-right) at (10,-2){$T_r$};
\draw [decorate,decoration={snake,amplitude=0.5pt,segment length=5pt}] (bath-left) -- (0);
\draw [decorate,decoration={snake,amplitude=0.5pt,segment length=5pt}] (bath-right) -- (9+1);
\node (force-right) at (12,0){$\tau_r$};
\draw [->,ultra thick] (9+1)--(force-right);
\node (force-left) at (-2,0){$\tau_\ell$};
\draw [->,ultra thick] (0)--(force-left);
\end{tikzpicture}
\caption{The unpinned chain with boundary thermal reservoirs and forced boundary conditions.}
\end{figure}

The energy {\footnote{The definition of the energy is slightly modified w.r.t. (\ref{eq:1-energy-site-x}). It is more convenient since the energies are then independent  random variables in the Gibbs grand canonical ensemble.}} of atom $x$ is defined by
\begin{equation*}
{\mc E}_1 =\cfrac{p_1^2}{2}, \quad {\mc E}_x =\cfrac{p_x^2}{2} + V(r_x), \quad x=2, \ldots,N.
\end{equation*}

The local conservation of
energy is expressed by the microscopic continuity equation 
\begin{equation*}
{\mc L}_N ({\mc E}_x) = -\nabla j^e_{x-1,x}, \quad x=1, \ldots, N,
\end{equation*}
where the energy current $j^e_{x,x+1}$ from site $x$ to site $x+1$ is
given by 
\begin{equation}
\begin{split}
j^e_{0,1} &= -\tau_{\ell} p_1 + \gamma_\ell (T_{\ell} -p_1^2), \\
j^e_{N,N+1} &=-\tau_r p_N -\gamma_r (T_r -p_N^2), \\
j^e_{x,x+1} &= - p_x V' (r_{x+1}), \, x=1,\ldots,N-1.
\end{split}
\end{equation}

The energy current $j^e_{0,1}$ (and similarly for $j^e_{N,N+1}$) is
composed of two terms: the term $-\tau_\ell p_1$ corresponds to the
work done on the first particle by the linear force and the term $
\gamma_\ell (T_{\ell} -p_1^2)$ is the heat current due to the left
reservoir. 

Let $P_s$ be the velocity of the center
of mass of the system and $J_{s}$ be the average energy current, which are defined by 
$$P_s=\langle p_x \rangle_{ss} \quad \text{and}\qquad 
J_s=\langle j^e_{x,x+1} \rangle_{ss} .
$$ 


We have the simple relation between these two quantities
\begin{equation}
\label{eq:JV}
J_s = -\tau_\ell P_s + \gamma_\ell (T_\ell -\langle p_1^2 \rangle_{ss} ), \quad J_s = -\tau_r P_s - \gamma_r (T_r -\langle p_N^2 \rangle_{ss} ).
\end{equation}

The value of $P_s$ can be determined exactly and is independent of the nonlinearities present in the system. By writing that $\langle {\mc L}_N (p_x) \rangle_{ss} =0$  for any $x=1, \ldots,N$ we get that the tension profile, defined by $\tau_x = \langle V'(r_x) \rangle_{ss}$, satisfies 
\begin{equation*}
\begin{split}
\tau_2 -\tau_\ell =\gamma_\ell P_s, \quad \tau_r -\tau_N = \gamma_r P_s,\\
\tau_{x+1} -\tau_x = \gamma P_s, \quad x=2, \ldots, N-1.
\end{split}
\end{equation*}
We have then:
 
\begin{lemma}[\cite{BO2}]
\label{lem:velo}
The velocity $P_s$ of the center of mass is given by
\begin{equation}
\label{eq:V}
P_s = \cfrac{\tau_r -\tau_\ell}{\gamma (N-2) + \gamma_\ell + \gamma_r}
\end{equation}
and the tension profile is linear:
\begin{equation}
  \label{eq:1}
  \tau_x = \cfrac{\gamma (x-2) + \gamma_\ell}{\gamma (N-2) + \gamma_\ell +
    \gamma_r} (\tau_r - \tau_\ell) + \tau_\ell.
\end{equation}
Consequently
\begin{equation}
\lim_{n \to \infty} \tau_{[Nu]} = 
\tau_\ell + (\tau_r -\tau_\ell) u, \quad u \in [0,1].
\end{equation}
\end{lemma}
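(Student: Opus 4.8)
The plan is to exploit stationarity: for every observable $f$ in the domain of $\mathcal{L}_N$ one has $\langle \mathcal{L}_N f\rangle_{ss}=0$, and I would feed this identity the \emph{linear} functions $f=r_x$ and $f=p_x$. The only genuine analytic input required is that these (unbounded) observables together with $V'(r_x)$ are integrable against $\mu_{ss}$ and that the stationarity identity genuinely holds for them; this is exactly what the Lyapunov-function moment bounds underlying the previous proposition provide. Granted that, the rest of the proof is a one-line telescoping computation.

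First I would establish that $\langle p_x\rangle_{ss}$ is independent of $x$. Since the flip noise $\mathcal{S}_N$ in (\ref{eq:ss41}) and the Langevin generators $\mathcal{B}_{1,T_\ell},\mathcal{B}_{N,T_r}$ in (\ref{eq:ss49}) act on the momenta only, one has $\mathcal{L}_N r_x=\mathcal{A}^{\tau_\ell,\tau_r}_N r_x=p_x-p_{x-1}$ for $x=2,\dots,N$ by (\ref{eq:ss51}); taking $\langle\cdot\rangle_{ss}$ gives $\langle p_x\rangle_{ss}=\langle p_{x-1}\rangle_{ss}$ for all such $x$, and this common value is $P_s$.

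Next I would compute $\mathcal{L}_N p_x$. From (\ref{eq:ss51}), (\ref{eq:ss41}) and (\ref{eq:ss49}), the flip noise contributes $-p_x$ for $2\le x\le N-1$ and nothing at the two endpoints, where instead the thermostats contribute $-\gamma_\ell p_1$ and $-\gamma_r p_N$; hence $\mathcal{L}_N p_1=V'(r_2)-\tau_\ell-\gamma_\ell p_1$, $\mathcal{L}_N p_x=V'(r_{x+1})-V'(r_x)-\gamma p_x$ for $x=2,\dots,N-1$, and $\mathcal{L}_N p_N=\tau_r-V'(r_N)-\gamma_r p_N$. Taking $\langle\cdot\rangle_{ss}$ and writing $\tau_x:=\langle V'(r_x)\rangle_{ss}$ yields precisely the discrete identities recorded just before the lemma:
\begin{equation*}
\tau_2-\tau_\ell=\gamma_\ell P_s,\qquad \tau_{x+1}-\tau_x=\gamma P_s\ \ (2\le x\le N-1),\qquad \tau_r-\tau_N=\gamma_r P_s.
\end{equation*}

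Finally I would sum these $N$ relations: the left-hand sides telescope to $\tau_r-\tau_\ell$, while the right-hand sides add up to $\bigl(\gamma(N-2)+\gamma_\ell+\gamma_r\bigr)P_s$, which is (\ref{eq:V}). Solving the recursion with this value of $P_s$ gives $\tau_x=\tau_\ell+\bigl(\gamma(x-2)+\gamma_\ell\bigr)P_s$, i.e. (\ref{eq:1}); and taking $x=[Nu]$ with $N\to\infty$ sends the prefactor $\bigl(\gamma([Nu]-2)+\gamma_\ell\bigr)/\bigl(\gamma(N-2)+\gamma_\ell+\gamma_r\bigr)$ to $u$, giving the stated boundary-layer-free linear profile. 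The only delicate point — the ``hard part'' in a fully rigorous write-up — is justifying $\langle\mathcal{L}_N f\rangle_{ss}=0$ for the unbounded $f=r_x,p_x$; everything else is the elementary algebra above.
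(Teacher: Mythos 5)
Your proof is correct and follows essentially the same route as the paper, which derives the identities $\tau_2-\tau_\ell=\gamma_\ell P_s$, $\tau_{x+1}-\tau_x=\gamma P_s$, $\tau_r-\tau_N=\gamma_r P_s$ from $\langle\mathcal{L}_N p_x\rangle_{ss}=0$ and then telescopes. Your additional verification that $\langle p_x\rangle_{ss}$ is $x$-independent (via $\mathcal{L}_N r_x=p_x-p_{x-1}$) and your remark on justifying stationarity for unbounded observables are sensible but do not change the argument.
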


For purely deterministic chain ($\gamma=0$), the
velocity $P_{s}$ is of order $1$, while the tension profile is flat
at the value $\left({\gamma_\ell + \gamma_r}\right)^{-1}\left[ {\gamma_\ell \tau_r + \gamma_r \tau_\ell }\right]$. The first effect of the
noise is to make $P_s$ of order $N^{-1}$ and to give a nontrivial macroscopic tension profile.


It is expected that there exists a positive constant $C$ independent of the size $N$ such that $\langle {\mc E}_x \rangle_{ss} \le C$ for any $x=1, \ldots,N$. Apart from the harmonic case we do not know how to prove such a bound. 

We shall denote by ${\tilde f_{ss}}$ the derivative of the stationary state $\mu_{ss}$ with respect to the
local Gibbs equilibrium state $\mu_{lg}$ defined by $\mu_{lg} (d\br, d\bp) = g(\br,\bp) d \br d \bp$ with
\begin{equation}
\label{eq:GLES-ness}
 g (\br,\bp) = \prod_{x=1}^N \frac{e^{-\beta_x (\mathcal E_x - \tau_x
    r_x)}}{Z(\tau_x\beta_x, \beta_x)}, 
\end{equation}
where $\beta_x=\beta_{\ell} + \frac{x}{N} (\beta_r -\beta_{\ell})$ and $\tau_x=\tau_{\ell} + \frac{x}{N} (\tau_r -\tau_{\ell})$. In the formula above we have introduced  $r_1=0$ to avoid annoying notations.

The function ${\tilde f}_{ss}$ is solution, in the sense of distributions, of the equation
\begin{equation}
 \label{eq:ss48}
{\tilde{\mc L}}_N^{*} \, \tilde f_{ss} = 0
\end{equation}
where ${\tilde{\mc L}}_N^*$ is the adjoint of ${\mc L}_N$ in ${\mathbb L}^2 (\mu_{lg})$. We assume that $T_r=T +\delta T, T_\ell= T$ and $\tau_r = \tau -\delta \tau, \tau_\ell= \tau$ with $\delta T, \delta \tau$ small. At first order in $\delta T$ and $\delta \tau$, we have
\begin{equation*}
  \begin{split}
{ \tilde {\mc L}}_N^{*} 
    &=  {\mc L}_{N, {\rm{eq.}}}^{*} +  \gamma_r \delta T \partial_{p_N}^2 -\delta \tau \partial_{p_N} -\frac {\delta T}{T^2 N} \sum_{x=1}^{N-1} \left(j^e_{x,x+1} + \tau p_x \right) -\frac{\delta \tau}{NT} \sum_{x=1}^{N-1} p_x+ o(\delta T, \delta \tau)
  \end{split}
\end{equation*}
where $ {\mc L}_{ N, {\rm{eq.}}}^{*}=-{ \mathcal A}^{\tau,\tau}_{N} + \gamma {\mc S}_N + \gamma_\ell {\mc B}_{1, T} + \gamma_r {\mc B}_{N, T} $ is the adjoint in ${\mathbb L}^{2} (\mu^N_{\tau,T})$ of
\begin{equation}
  \label{eq:ss43}
   {\mc L}_{N,{\rm{eq.}}} ={ \mathcal A}_N^{\tau,\tau} + \gamma {\mc S}_N + \gamma_\ell {\mc B}_{1, T} + \gamma_r {\mc B}_{N, T} 
\end{equation}
and $\mu_{\tau,T}^N$ is the finite volume Gibbs grand canonical ensemble with tension $\tau$ and temperature $T$. We now expand $\tilde f_{ss}$ at the linear order in $\delta T$ and $\delta \tau$:
\begin{equation}
  \label{eq:ss55tau}
  \tilde f_{ss} = 1 + \tilde u \, \delta T +\tilde v \,  \delta \tau +
  o(\delta T, \delta \tau)
\end{equation}
and we get that $\tilde u$ and $\tilde v$ are solution of
\begin{equation}
  \label{eq:ss51}
  \begin{split}
    \mc L_{N,{\rm{eq.}}}^{*} \tilde u &= \frac 1{T^2 N} \sum_{x=1}^{N-1}
    \left(j^e_{x,x+1} + \tau p_x \right),
    \\
     \mc L_{N, {\rm{eq.}}}^{*} \tilde v &= \frac{1}{NT}
     \sum_{x=1}^{N-1} p_x . 
  \end{split}
\end{equation}

It is clear that the function $h_x$ appearing in the microscopic fluctuation-dissipation equation (\ref{eq:2:fluc-diss}) is closely related (up to a time reversal) to the functions ${\tilde u}$, ${\tilde v}$, i.e. to the {\textit{first order correction to local equilibrium}}.

We can now compute the average energy current at the first order in $\delta T$ and $\delta \tau$ as $N \to \infty$ but we need to introduce some notation. We recall that the generator of the infinite dynamics is given by ${\mc L} ={\mc A} +\gamma {\mc S}$ where, for any $f \in C_0^1 ({\tilde \Omega})$,
\begin{equation*}
\begin{split}
&({\mc A} f)(\br, \bp) =  \sum_{x \in \ZZ} \left[ \left(p_x - p_{x-1}\right) \partial_{r_x} f + \left(V'(r_{x+1}) - V'(r_{x})\right) \partial_{p_{x}} f \right] (\br,\bp),\\
&({\mc S} f) (\br, \bp) = \frac{1}{2}\sum_{x \in \ZZ} \left[ f(\br, \bp^x) -f (\br, \bp) \right].
\end{split}
\end{equation*}
Let ${\bb H}:={\bb H}_{\tau,T}$ be the completion of the vector space of bounded local functions w.r.t. the semi-inner product $\ll \cdot, \cdot \gg$ defined for bounded local functions $f,g : {\tilde \Omega} \to \RR$, by 
\begin{equation}
\label{eq:scalarproduct>>}
\ll f, g \gg = \sum_{x \in \ZZ} \left\{ \mu_{\tau,T} (f \theta_x g) - \mu_{\tau,T} (f) \mu_{\tau,T} (g)\right\}.
\end{equation}
Observe that in ${\bb H}$ every constant $c\in \RR$ and discrete gradient $\psi=\theta_1 f - f$ is equal to zero since for any local bounded function $h$ we have $\ll c, h \gg =0$ and $\ll \psi, h \gg =0$. 

Assuming they exist let ${\tilde J}_s$ and ${\hat P_s}$ be the limiting average energy current and velocity:
\begin{equation}
\label{eq:PJJJ}
{\tilde J}_s = \lim_{N \to \infty} N \langle j^e_{0,1} \rangle_{ss}, \quad {\hat P}_{s} = \lim_{N \to \infty} N \langle p_0 \rangle_{ss},
\end{equation}
and define ${\hat J}_{s} = {\tilde J}_s + \tau {\hat P}_s$. We expect that as $N$ goes to infinity and, at first order in $\delta T$ and $\delta \tau$,
\begin{equation*}
\left(
\begin{array}{c}
{\hat J}_s\\
{\hat P}_s
\end{array}
\right)
= - \, 
\kappa (T,\tau) \, 
\left(
\begin{array}{c}
\delta T\\
\delta \tau
\end{array}
\right)
\end{equation*}
with 
\begin{equation}
\label{eq:1:cond-mat}
\kappa (T,\tau)=
\left(
\begin{array}{cc}
\kappa^e& \kappa^{e,r}\\
\kappa^{r,e}&\kappa^r
\end{array}
\right)
\end{equation}
the {\textit{ thermal conductivity}} matrix. Assume for simplicity that $N=2k$ is even. By (\ref{eq:ss55tau}) and (\ref{eq:ss51}), we get that
\begin{equation*}
\begin{split}
N \langle p_0 \rangle_{ss}=N \langle p_k \rangle_{ss}& = N \int p_k \,  {\tilde f}_{ss} \; d\mu_{lg}\\
&= N\,  \delta T \, \int  p_k \, {\tilde u} \; d\mu_{lg} \; + \; N \, \delta \tau \, \int  p_k \, {\tilde v} \; d\mu_{lg} \;+ \; o(\delta T, \delta \tau)\\
&= -\cfrac{\delta T}{T^2} \, \int  p_k \; (-{\mc L}_{N, {\rm{eq.}}}^*)^{-1} \Big( \sum_{x=1}^{N-1} (j_{x,x+1}^e + \tau p_x) \Big) \; d\mu_{lg} \\
&\;  - \; \cfrac{\delta \tau}{T}  \, \int  p_k \;   (-{\mc L}_{N, {\rm{eq.}}}^*)^{-1} \Big( \sum_{x=1}^{N-1} p_x \Big) \; d\mu_{lg} \;+ \; o(\delta T, \delta \tau).
\end{split}
\end{equation*}
Since $\tfrac{d\mu_{lg}}{d\mu_{\tau, T}^N}$ is equal to $1+ O(\delta T, \delta \tau)$, we can replace $\mu_{\lg}$ by $\mu_{\tau,T}^N$ in the last terms of the previous expression. Using that ${\mc L}_{N, {\rm{eq.}}}^*$ is the adjoint of ${\mc L}_{N, {\rm{eq.}}}$ in ${\bb L}^2 (\mu_{\tau,T}^N)$ and denoting by $\langle \cdot, \rangle_{\tau,T}$ the scalar product in ${\bb L}^2 (\mu_{\tau,T}^N)$, we obtain that
\begin{equation*}
\begin{split}
N \langle p_0 \rangle_{ss}&= -\cfrac{\delta T}{T^2} \, \left\langle (-{\mc L}_{N, {\rm{eq.}}})^{-1}  p_k \; , \; \sum_{x=1}^{N-1} (j_{x,x+1}^e + \tau p_x)  \; \right\rangle_{\tau,T} \\
&\;  - \; \cfrac{\delta \tau}{T}  \, \left\langle (-{\mc L}_{N, {\rm{eq.}}})^{-1}  p_k \; , \;  \sum_{x=1}^{N-1} p_x \; \right\rangle_{\tau,T} \;+ \; o(\delta T, \delta \tau)\\
&= -\cfrac{\delta T}{T^2} \, \left\langle  (-{\mc L}_{2k, {\rm{eq.}}})^{-1}  p_k \; , \; \sum_{y=-k+1}^{k-1} (j_{y+k,y+k+1}^e + \tau p_{y+k})  \; \right\rangle_{\tau,T} \\
&\;  - \; \cfrac{\delta \tau}{T}  \, \left\langle (-{\mc L}_{2k, {\rm{eq.}}})^{-1}  p_k  \; , \;   \sum_{y=-k+1}^{k-1} p_{y+k} \; \right\rangle_{\tau,T} \;+ \; o(\delta T, \delta \tau)
\end{split}
\end{equation*} 
In the first order terms of the previous expression we can recenter everything around $k$ by a translation of $-k$ and we get
\begin{equation*}
\begin{split}
N \langle p_0 \rangle_{ss}&= -\cfrac{\delta T}{T^2} \, \left\langle  (-{\mc L}_{\Lambda_k, {\rm{eq.}}})^{-1}  p_0  \; , \;  \sum_{y=-k+1}^{k-1} (j_{y,y+1}^e + \tau p_{y})  \; \right\rangle _{\tau,T} \\
&\;  - \; \cfrac{\delta \tau}{T}  \, \left\langle  (-{\mc L}_{\Lambda_k, {\rm{eq.}}})^{-1}  p_0  \; , \;    \sum_{y=-k+1}^{k-1} p_{y} \; \right\rangle_{\tau,T} \;+ \; o(\delta T, \delta \tau)
\end{split}
\end{equation*} 
where $\Lambda_k= \{-k+1, \ldots,k\}$ and 
\begin{equation*}
{\mc L}_{\Lambda_k, {\rm{eq.}}}={\mc A}_{\Lambda_k}^{\tau,\tau} +\gamma {\mc S}_{\Lambda_k} + \gamma_\ell {\mc B}_{-k,T} +{\gamma_r} {\mc B}_{k, T}
\end{equation*}
with
\begin{equation*}
\begin{split}
    {\mathcal A }^{\tau, \tau}_{\Lambda_k} = 
    \sum_{x=-k+2}^{k} \left(p_{x} - p_{x-1}\right) \partial_{r_x} +
        \sum_{x=-k+2}^{k-1}\left(V'(r_{x+1}) - V'(r_{x})\right)
      \partial_{p_{x}}\\
    - \left(\tau- V'(r_{-k+2})\right) \partial_{p_{-k+1}} 
    + \left(\tau - V'(r_{k})\right) \partial_{p_{k}}
  \end{split}
\end{equation*}
and
\begin{equation*}
({\mc S}_{\Lambda_k} f ) (\br,\bp) = \frac{1}{2} \sum_{x=-k+2}^{k-1} \left(f(\br,\bp^{x}) - f(\br,\bp)\right).
\end{equation*}

A similar formula can be obtained for $N\langle j_{0,1}^e \rangle_{ss}$. As $k \to \infty$, the finite volume Gibbs measure converges to the infinite volume Gibbs measure. Moreover, we expect that since $k \to \infty$ the effect of the boundary operators ${\mc B}_{\pm k,T}$ around the site $0$ disappears so that $(-{\mc L}_{\Lambda_k, {\rm{eq.}}})^{-1} p_0$ converges to $(-{\mc L})^{-1} p_0$. Therefore, in the thermodynamic limit $N \to \infty$ ({\textit{i.e.}} $k \to \infty$), the transport coefficients are given by the Green-Kubo formulas
\begin{equation}
\label{eq:1:ke}
\begin{split}
\kappa^e = T^{-2} \ll j^e_{0,1} +\tau p_0\,,\, (-{\mc L})^{-1}\, (j^e_{0,1} +\tau p_0) \gg,\\
\kappa^{e,r} =T^{-1} \ll p_0\,,\, (-{\mc L})^{-1} \,( j^e_{0,1}+\tau p_0) \gg, 
\end{split}
\end{equation} 
and
\begin{equation}
\label{eq:1:kr}
\begin{split}
\kappa^r =T^{-1} \ll p_{0} \,,\,(- {\mc L})^{-1}\, (p_0) \gg,\\
\kappa^{r,e} =T^{-2} \ll j^e_{0,1} + \tau p_0\,,\, (-{\mc L})^{-1} \,( p_0) \gg. 
\end{split}
\end{equation} 

The argument above is formal. In fact even proving the existence of the transport coefficients defined by (\ref{eq:1:ke}), (\ref{eq:1:kr}) is a non-trivial task. The existence of ${\hat P}_s$ defined by the second limit in (\ref{eq:PJJJ}) can be made rigorous since we have the exact expression of $P_s$. From Lemma \ref{lem:velo}, we have, even for $\delta \tau ,\delta T$ that are not small,
$${\hat P}_s = -\cfrac{\delta \tau}{\gamma}.$$
On the other hand we show in Theorem \ref{th:GK} that the quantities $\kappa^r, \kappa^{r,e}$, formally given by  (\ref{eq:1:kr}), can be defined in a slightly different but rigorous way, and are then equal to
\begin{equation}
\label{eq:krr}
{\kappa^r} = \gamma^{-1}, \quad {\kappa}^{r,e} =0.
\end{equation}  
Thus we can rigorously establish the validity of the linear response theory for the velocity ${\hat P}_s$.

\subsubsection{Existence of the Green-Kubo formula}
\label{subsec:GK0}

One of the main results of \cite{BO2} is the existence of the Green-Kubo formula for the conductivity matrix. Let ${\bb H}^a$ (resp. ${\bb H}^s$) be the set of functions $f:{\tilde \Omega} \to \RR$ antisymmetric (resp. symmetric) in $\bp$, i.e. $f(\br,\bp)=-f(\br,-\bp)$ (resp. $f(\br,\bp)=f(\br,-\bp)$)  for every configuration $(\br,\bp) \in {\tilde \Omega}$. For example, the functions $j^e_{0,1}$, $p_0$ and every linear combination of them are antisymmetric in $\bp$.

\begin{theorem}[\cite{BO2}, \cite{BHLLO}]
\label{th:GK}
Let $f,g \in {\bb H}^a$. The limit 
\begin{equation*}
\sigma(f, g) =\lim_{\substack{z \to 0\\ z>0}} \ll f\, ,\, (z - {\mc L})^{-1} \, g \gg
\end{equation*}
exists and $\sigma(f,g)=\sigma(g,f)$. Therefore, the conductivity matrix $\kappa (T,\tau)$ is well defined in the following sense: the limits 
\begin{equation}
\begin{split}
\kappa^e =\lim_{\substack{z \to 0\\ z>0}}  T^{-2} \ll j^e_{0,1} +\tau p_0\,,\, (z-{\mc L})^{-1}\, (j^e_{0,1} +\tau p_0) \gg,\\
\kappa^{e,r} =\lim_{\substack{z \to 0\\ z>0}} T^{-1} \ll p_0\,,\, (z-{\mc L})^{-1} \,( j^e_{0,1}+\tau p_0) \gg,\\
\kappa^r =\lim_{\substack{z \to 0\\ z>0}}  T^{-1} \ll p_{0} \,,\,(z- {\mc L})^{-1}\, (p_0) \gg=\gamma^{-1},\\
\kappa^{r,e} =\lim_{\substack{z \to 0\\ z>0}}  T^{-2} \ll j^e_{0,1} + \tau p_0\,,\, (z-{\mc L})^{-1} \,( p_0) \gg
\end{split}
\end{equation} 
exist and are finite. Moreover Onsager's relation $\kappa^{e,r}=\kappa^{r,e} (=0) $ holds.
\end{theorem}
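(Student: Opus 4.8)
\emph{Overall strategy.} I would build on two structural features of the infinite-volume generator $\mc L = \mc A + \gamma\mc S$ acting on $\bb H = \bb H_{\tau,T}$: the antisymmetry of the Liouville part $\mc A$, and the fact that, although the velocity-flip noise $\mc S$ is very degenerate, it is \emph{nondegenerate on the momentum-antisymmetric sector}. (Since $-\mc L$ is accretive for $\ll\cdot,\cdot\gg$, namely $\ll h,-\mc L h\gg = \gamma\ll h,(-\mc S)h\gg\ge0$, the resolvent $(z-\mc L)^{-1}$ is well defined on $\bb H$ for $z>0$.) Let $\Theta$ be the momentum reversal $(\Theta h)(\br,\bp) = h(\br,-\bp)$; because $\mu_{\tau,T}$ is even in $\bp$, $\Theta$ is an isometric involution of $\bb H$, and inspection of the operators gives $\Theta\mc A\Theta=-\mc A$, $\Theta\mc S\Theta=\mc S$. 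In the orthogonal splitting $\bb H=\bb H^s\oplus\bb H^a$ (the spaces of the statement) one has, by a parity count, $\mc A(\bb H^s)\subset\bb H^a$ and $\mc A(\bb H^a)\subset\bb H^s$, while $\mc S$ preserves each summand; grading functions by the set $A$ of sites at which they are odd in the corresponding momentum makes $\mc S$ diagonal with eigenvalue $-|A|$, so $\ker\mc S$ is the set of functions even in every $p_x$ and, crucially, $-\mc S\ge\mathrm{Id}$ on $\bb H^a$. Since $p_0$, $j^e_{0,1}$ and $j^e_{0,1}+\tau p_0$ all lie in $\bb H^a$, the plan is to reduce the computation of $\ll f,(z-\mc L)^{-1}g\gg$, $f,g\in\bb H^a$, to a resolvent problem on $\bb H^a$ driven by a symmetric, uniformly coercive operator.

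\emph{A priori estimates and existence of $\sigma$.} Fix $g\in\bb H^a$ and put $u_z=(z-\mc L)^{-1}g$. Testing the resolvent equation against $u_z$ and using the antisymmetry of $\mc A$ gives $z\ll u_z,u_z\gg + \gamma\ll u_z,(-\mc S)u_z\gg=\ll u_z,g\gg$; writing $\ll u_z,g\gg = \ll(-\mc S)^{1/2}u_z,(-\mc S)^{-1/2}g\gg$ (legitimate since $-\mc S\ge\mathrm{Id}$ on $\bb H^a\ni g$) and applying Cauchy--Schwarz yields, uniformly in $z>0$,
\[
\ll u_z,(-\mc S)u_z\gg\ \le\ \gamma^{-2}\ll g,g\gg,\qquad z\ll u_z,u_z\gg\ \le\ \gamma^{-1}\ll g,g\gg,
\]
hence the antisymmetric parts $u_z^a$ are bounded in $\bb H$. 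Eliminating the symmetric component — from the $\bb H^s$-projection one gets $u_z^s=(z-\gamma\mc S)^{-1}\mc A u_z^a$, with $z-\gamma\mc S$ invertible on $\bb H^s$ because $z>0$ — reduces the equation to
\[
\big(z+\gamma(-\mc S)+(-K_z)\big)\,u_z^a\ =\ g\ \text{ on }\bb H^a,\qquad -K_z\ :=\ -\mc A\,(z-\gamma\mc S)^{-1}\big|_{\bb H^s}\,\mc A\ \ge\ 0,
\]
and the bracketed operator is $\ge\gamma(-\mc S)\ge\gamma\,\mathrm{Id}$ on $\bb H^a$, uniformly in $z$. Therefore $\ll f,(z-\mc L)^{-1}g\gg=\ll f,u_z^a\gg$ stays bounded, $\{u_z^a\}$ is weakly compact with bounded $H_1$-seminorm, and existence of $\sigma(f,g)=\lim_{z\downarrow0}\ll f,u_z^a\gg$ follows once one checks that every weak limit point of $u_z^a$ solves one and the same limiting ($z=0$) equation, unique in the relevant space. \emph{This uniqueness is, I expect, the main obstacle}: the noise is too degenerate for a (graded) sector condition, so the general theory does not apply and one must control directly the contribution of $\ker\mc S$, which is amplified like $z^{-1}$ by $(z-\gamma\mc S)^{-1}|_{\bb H^s}$, using the explicit form of $\mc A$ and of the velocity-flip $\mc S$ — this is the analytic core of \cite{BO2} (and \cite{BHLLO}).

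\emph{Symmetry and Onsager's relation.} For $f,g\in\bb H^a$ and $z>0$, $\ll f,(z-\mc L)^{-1}g\gg=\ll(z-\mc L^*)^{-1}f,g\gg=\ll g,(z-\mc L^*)^{-1}f\gg$; and since $\mc L^*=-\mc A+\gamma\mc S=\Theta\mc L\Theta$ we get $(z-\mc L^*)^{-1}f=\Theta(z-\mc L)^{-1}\Theta f=-\Theta(z-\mc L)^{-1}f$, whence, $\Theta$ being isometric with $\Theta g=-g$, $\ll g,(z-\mc L^*)^{-1}f\gg=\ll g,(z-\mc L)^{-1}f\gg$. Thus the pre-limit form $(f,g)\mapsto\ll f,(z-\mc L)^{-1}g\gg$ is symmetric on $\bb H^a$, hence so is $\sigma$.

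\emph{The explicit entries.} Here $\mc A p_0=V'(r_1)-V'(r_0)=\nabla\!\big(V'(r_0)\big)$ is a discrete gradient, hence zero in $\bb H$, while $\mc S p_0=-p_0$; under the standing assumptions on $V$ this gives $\mc L p_0=\mc L^* p_0=-\gamma p_0$ in $\bb H$, so $(z-\mc L)^{-1}p_0=(z-\mc L^*)^{-1}p_0=(z+\gamma)^{-1}p_0$ and, for every $h\in\bb H$, $\ll h,(z-\mc L)^{-1}p_0\gg=\ll p_0,(z-\mc L)^{-1}h\gg=(z+\gamma)^{-1}\ll h,p_0\gg$. Hence $\kappa^r=T^{-1}\sigma(p_0,p_0)=T^{-1}\gamma^{-1}\ll p_0,p_0\gg=\gamma^{-1}$, since in the one-dimensional unpinned case $\mu_{\tau,T}$ is a product measure and $\ll p_0,p_0\gg$ is just the variance $T$ of $p_0$. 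Moreover $\ll j^e_{0,1}+\tau p_0,p_0\gg=-\sum_x\mu_{\tau,T}\!\big(p_0\,p_x\,(V'(r_1)-\tau)\big)=0$ (the terms $x\ne0$ vanish because $\mu_{\tau,T}(p_0)=0$, the term $x=0$ because $\mu_{\tau,T}(V'(r_1))=\tau$), so $\kappa^{r,e}=T^{-2}\sigma(j^e_{0,1}+\tau p_0,p_0)=0$ and $\kappa^{e,r}=T^{-1}\sigma(p_0,j^e_{0,1}+\tau p_0)=0$, which is Onsager's relation. Together with the existence argument above applied to $f=g=j^e_{0,1}+\tau p_0$ (which yields $\kappa^e$), this proves the theorem.
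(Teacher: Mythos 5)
Your overall architecture matches the paper's: the same orthogonal splitting $\bb H=\bb H^a\oplus\bb H^s$, the same spectral gap $-\mc S\ge \mathrm{Id}$ on $\bb H^a$ (this is Lemma \ref{lem:sg}, proved by decomposing in Hermite polynomials), the same a priori bound $z\ll u_z,u_z\gg+\gamma\ll u_z,(-\mc S)u_z\gg\le \|u_z\|_1\|g\|_{-1}$ giving boundedness of $(u_z)$ in $\bb H^1$ and of $(u_z^a)$ in $\bb H$, and your symmetry argument via $\mc L^*=\Theta\mc L\Theta$ and your explicit computations of $\kappa^r=\gamma^{-1}$ and $\ll j^e_{0,1}+\tau p_0,p_0\gg=0$ are correct and consistent with the paper.

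The gap is exactly where you flag it, and it cannot be deferred: weak compactness of $(u_z^a)_{z>0}$ alone does not give existence of $\lim_{z\to 0}\ll f,u_z^a\gg$; you must show all weak limit points coincide. Your guess about how this is done --- direct control of the $\ker\mc S$ contribution amplified like $z^{-1}$ by $(z-\gamma\mc S)^{-1}|_{\bb H^s}$ --- is not the paper's route, and attacking it that way would run straight into the degeneracy you describe. The paper's device is a \emph{two-parameter} resolvent identity: write the coupled parity equations for two resolvent parameters $\mu,\nu$, pair the $\nu$-equation against $w_\mu^+$ and the $\mu$-equation against $w_\nu^-$, and sum to get the symmetric relation (\ref{eq:baseone}), namely $\nu\ll w_\mu^+,w_\nu^+\gg+\mu\ll w_\nu^-,w_\mu^-\gg+\gamma\ll w_\mu,w_\nu\gg_1=\ll w_\nu,g\gg$. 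Taking $\nu\to 0$ and then $\mu\to 0$ along two possibly different weakly convergent subsequences with limits $w_*$ and $w^*$ gives $\gamma\ll w_*,w_*\gg_1=\ll w_*,g\gg$ and $\gamma\ll w_*,w^*\gg_1=\ll w^*,g\gg$; exchanging the roles of the two sequences yields $2\gamma\ll w_*,w^*\gg_1=\gamma\ll w_*,w_*\gg_1+\gamma\ll w^*,w^*\gg_1$, i.e. $w_*=w^*$ in $\bb H^1$. This forces uniqueness of the limit point without ever identifying a limiting equation, without any sector condition, and without analyzing $\ker\mc S$. Inserting this step in place of your appeal to the references completes the proof.
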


We have a nice thermodynamical consequence of the previous result. If $\delta T$ and $\delta \tau$ are small and of the same order, the system cannot be used as a refrigerator or a
boiler: at the first order, a gradient of tension does
not contribute to the heat current ${\hat J}_s$. 
The argument above says nothing about the possibility to
realize a heater or a refrigerator if $\delta \tau $ is not of the
same order as $\delta T$. For the harmonic chain, we will see that it is possible to get a heater if $\delta
\tau $ is of order $\sqrt {\delta T}$.     

\begin{remark}

\begin{enumerate}
\item The existence of the Green-Kubo formula is also valid for a pinned or unpinned chain in any dimension. 
\item Observe that with respect to the establishment of a microscopic fluctuation-dissipation equation (\ref{eq:2:fluc-diss}) the computation of the Green-Kubo formula is less demanding since only the knowledge of $\sum_x h_x$ is necessary. 
\end{enumerate}
\end{remark}

The proof of Theorem \ref{th:GK} is based on functional analysis arguments. The first main observation is that there exists a spectral gap for the operator ${\mc S}$ restricted to the space ${\bb H}^a$. 

\begin{lemma}
 \label{lem:sg}
The noise operator ${\mc S}$ lets ${\bb H}^a$ and ${\bb H}^s$ invariant. For any local function $f \in {\bb H}^a$ we have that
\begin{equation}
\label{eq:1:sg1}
 \ll f, f \gg \; \le \; \ll f, -{\mc S} f \gg.    
\end{equation}
Moreover, for any local function $f \in {\bb H}^a$, there exists a local function $h \in {\bb H}^a$ such that
$${\mc S}h =f.$$ 
\end{lemma}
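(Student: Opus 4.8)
The plan is to reduce all three assertions to one spectral fact about $\mc S$ that is transparent in $\bb L^{2}(\mu_{\tau,T})$ and then to transport it to $\bb H$. The only structural inputs used are that $\mu_{\tau,T}$ is translation invariant, invariant under each flip $\bp\mapsto\bp^{x}$, and that conditionally on the positions the momenta are independent (in the one‑dimensional unpinned case $\mu_{\tau,T}$ is even a product measure). Given a bounded local $f$ with momentum support $\Lambda$, I would split it into its parities in each momentum variable: writing $\bp^{S}$ for the configuration obtained from $\bp$ by changing the sign of $p_{y}$, $y\in S$, set
\begin{equation*}
f_{B}=\frac{1}{2^{|\Lambda|}}\sum_{S\subseteq\Lambda}(-1)^{|S\cap B|}\,f(\br,\bp^{S}),\qquad B\subseteq\Lambda .
\end{equation*}
Then $f=\sum_{B}f_{B}$ is a finite sum, each $f_{B}$ is bounded and local, $f_{B}$ is odd in $p_{y}$ for $y\in B$ and even for $y\notin B$, the pieces indexed by distinct $B$ are orthogonal in $\bb L^{2}(\mu_{\tau,T})$, and a one‑line check gives $\mc S f_{B}=-|B|\,f_{B}$. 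Consequently $f$ is antisymmetric (resp. symmetric) in $\bp$ exactly when only the $f_{B}$ with $|B|$ odd (resp. even) are nonzero, and since $\mc S$ respects the parity of $|B|$ it maps local antisymmetric (resp. symmetric) functions to local antisymmetric (resp. symmetric) functions, hence leaves $\bb H^{a}$ and $\bb H^{s}$ invariant; moreover $-\mc S\ge 1$ on the antisymmetric class in $\bb L^{2}$.

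Next I would transport this to $\bb H$. Since $\mu_{\tau,T}(\mc S f)=0$ and the noise commutes with translations, $\ll f,-\mc S f\gg=\sum_{x}\mu_{\tau,T}\big(f\,\theta_{x}(-\mc S f)\big)$. Expanding $f=\sum_{B}f_{B}$, using $-\mc S f=\sum_{B}|B|\,f_{B}$, and noting that $\mu_{\tau,T}(f_{B}\,\theta_{x}f_{B'})$ vanishes unless $B=B'+x$ (parity together with conditional independence of the momenta), one groups the finite sets $B$ into their translation orbits $\mc O$, writes $m_{\mc O}$ for the common cardinality of the sets in $\mc O$ and $\Psi_{\mc O}=\sum_{B\in\mc O}f_{B}$, and arrives at
\begin{equation*}
\ll f,f\gg=\sum_{\mc O}\ll\Psi_{\mc O},\Psi_{\mc O}\gg,\qquad
\ll f,-\mc S f\gg-\ll f,f\gg=\sum_{\mc O}(m_{\mc O}-1)\,\ll\Psi_{\mc O},\Psi_{\mc O}\gg .
\end{equation*}
If $f\in\bb H^{a}$ every contributing orbit has $m_{\mc O}$ odd, so $m_{\mc O}-1\ge 0$; since $\ll\cdot,\cdot\gg$ is positive semidefinite the last sum is nonnegative, which is exactly (\ref{eq:1:sg1}). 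For the surjectivity statement, given a local $f\in\bb H^{a}$ I would simply set $h=-\sum_{B}|B|^{-1}f_{B}$, the sum running over the finitely many $B$ with $f_{B}\ne 0$, all of odd cardinality $\ge 1$; then $h$ is a finite sum of bounded local functions, is antisymmetric in $\bp$, hence a local element of $\bb H^{a}$, and $\mc S h=\sum_{B}f_{B}=f$ (the same bookkeeping gives $\ll h,h\gg\le\ll f,f\gg$, i.e. $\mc S^{-1}$ is a contraction on $\bb H^{a}$).

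The delicate point is the transport step, not the algebra. The space $\bb H$ is the completion of a \emph{quotient} of the local functions (constants and discrete gradients are identified with $0$), so the natural map sending a mean‑zero local function to its class in $\bb H$ is neither injective nor bounded, and the inequality $-\mc S\ge 1$ available in $\bb L^{2}$ cannot be invoked directly. The orbit decomposition above is precisely the device that converts the $\bb L^{2}$ eigenvalue bound into the $\bb H$ bound while using only positivity of $\ll\cdot,\cdot\gg$ on local functions. A routine point to check along the way is that the parity projections $f\mapsto f_{B}$ keep functions local and bounded, which holds because each is a finite linear combination of the $\bb L^{\infty}$‑isometries $f\mapsto f(\br,\bp^{S})$.
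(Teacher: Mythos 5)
Your proof is correct. It rests on the same underlying fact as the paper's one-line argument --- that $\mc S$, acting only on the momenta, is diagonalized by momentum parity with eigenvalue $-|B|$ on functions odd in exactly the variables $\{p_y\,;\,y\in B\}$, and that antisymmetry in $\bp$ forces $|B|$ odd, hence $|B|\ge 1$ --- but you reach it by a different decomposition: the paper expands in the orthogonal basis of Hermite polynomials (available because the Gibbs marginals in $\bp$ are Gaussian), whereas your parity projections $f\mapsto f_B$ group together all Hermite components sharing the same set of odd indices. Your version is coarser but sufficient, and slightly more general, since it uses only invariance of $\mu_{\tau,T}$ under each sign flip $p_y\mapsto -p_y$ rather than Gaussianity. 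The genuine added value of your write-up is the orbit bookkeeping that transports the $\bb L^2$ bound $-\mc S\ge 1$ to the semi-inner product $\ll\cdot,\cdot\gg$: since $\bb H$ is a completion of a quotient (constants and gradients are killed) the $\bb L^2$ spectral inequality does not transfer automatically, and the paper's ``the lemma follows easily'' hides exactly the identity $\ll f,-\mc S f\gg-\ll f,f\gg=\sum_{\mc O}(m_{\mc O}-1)\ll\Psi_{\mc O},\Psi_{\mc O}\gg$ that you make explicit. The explicit inverse $h=-\sum_B|B|^{-1}f_B$ for the surjectivity claim is likewise the natural analogue of inverting $\mc S$ on the odd Hermite modes. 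One cosmetic caveat: the invariance of $\bb H^a$ and $\bb H^s$ is established, as in the paper, only on the dense set of local functions, since $-\mc S$ is not uniformly bounded over all local functions; this matches how the lemma is actually used.
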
 

\begin{proof}
Since the Gibbs states are Gaussian states in the $p_x$'s it is convenient to decompose the operator ${\mc S}$ (which acts only on the $p_x$'s)  in the orthogonal basis of Hermite polynomials. The the lemma follows easily.
\cqfd\end{proof}

\begin{proof}[Theorem \ref{th:GK}] 

We observe first that ${\bb H}^a$ and ${\bb H}^s$ are orthogonal Hilbert spaces such that ${\bb H} ={\bb H}^a \oplus {\bb H}^s$. It is also convenient to define the following semi-inner product
\[
\ll u,w\gg_1=\ll u, (-\mc S) w\gg.
\]
Let $\bb H^1$ be the associated Hilbert space. We also define the Hilbert space $\bb H^{-1}$ via the duality
given by the ${\bb H}$ norm, that is  
\[
\|u\|_{-1}^2=\sup_{w} \{ \, 2 \ll u,w\gg -\ll w,w\gg_1 \, \} 
\]
where the supremum is taken over local bounded functions $w$. By Lemma \ref{lem:sg} we have that ${\bb H}^a\subset {\bb H}^{-1}$. Thus $g \in {\bb H}^{-1}$. 

Let $w_z$ be the solution of the resolvent equation $(z- {\mc L}) w_z = g$. We have to show that $\ll f, w_z \gg$ converges as $z$ goes to $0$. We decompose $w_z$ into $w_z = w^-_z +w^+_z$, $w^-_z \in {\bb H}^a$ and $w^+_z \in {\bb H}^s$. Since ${\bb H}^a$ is orthogonal to ${\bb H}^s$ and $f \in {\bb H}^a$ we have $\ll f, w_z \gg = \ll f, w_z^- \gg$. It is thus sufficient to prove that $(w_z^-)_{z>0}$ converges weakly in ${\bb H}$ as $z \to 0$.

Since ${\mc A}$ inverts the parity and $\mc S$ preserves it and ${\bb H}^a \oplus {\bb H}^s =\bb H$ and $g \in {\bb H}^a$, we have, for any $\mu,\nu>0$, 
\begin{equation}
\label{eq:cfgst}
\begin{split}
&\nu w_{\nu}^+ - {\mc A} w_{\nu}^- -\gamma {\mc S} w_{\nu}^+ =0,\\
&\mu w_{\mu}^{-} - {\mc A} w_\mu^+ - \gamma {\mc S} w_{\mu}^- =g.
\end{split}
\end{equation}
Taking the scalar product with $w_\mu^+$ (resp. $w_{\nu}^-$) on both sides of the first (resp. second) equation of (\ref{eq:cfgst}), we get
\begin{equation}
\label{eq:oldsplit}
\begin{split}
&\nu\ll w_{\mu}^+,w_{\nu}^+\gg -\ll w_{\mu}^+, {\mc A} w_{\nu}^-\gg+\gamma\ll w_{\mu}^+,w_{\nu}^+\gg_1=0,\\
&\mu\ll w_{\nu}^-,w_{\mu}^-\gg -\ll w_{\nu}^-, {\mc A} w_{\mu}^+\gg+\gamma \ll w_{\mu}^-,w_{\nu}^-\gg_1=\ll w_{\nu},g \gg.
\end{split}
\end{equation}
Summing the above equations we have
\begin{equation}
\label{eq:baseone}
\nu\ll w_{\mu}^+,w_{\nu}^+\gg+\mu\ll w_{\nu}^-,w_{\mu}^-\gg+\gamma\ll w_{\mu},w_{\nu}\gg_1=\ll w_{\nu},g\gg
\end{equation}
Putting $\mu=\nu$ we get
\[
\nu\ll w_{\nu} , w_\nu \gg +\gamma \ll w_{\nu},w_{\nu}\gg_1\leq \| w_{\nu}\|_1\| g\|_{-1}.
\]
Hence $(w_\nu)_{\nu >0}$ is uniformly bounded in $\bb H^1$ and by the spectral gap property so is $( w_\nu^- )_{\nu >0}$ in $\bb H$. Moreover, $( \nu w_{\nu} )_{\nu>0}$ converges strongly to $0$ in ${\bb H}$ as $\nu \to 0$. We can then extract weakly convergent subsequences. Taking first the limit, in \eqref{eq:baseone}, $\nu\to 0$ and then $\mu\to 0$ along one such subsequence (converging to $w_*$) we have 
\[
\gamma \ll w_{*},w_{*}\gg_1=\ll w_{*},g\gg.
\]
Next, taking the limit along different weakly convergent subsequences (let $w^*$ be the other limit) we have
\[
\gamma \ll w_{*},w^{*}\gg_1=\ll w^{*},g\gg
\]
and, exchanging the role of the two sequences
\[
2\gamma \ll w_{*},w^{*}\gg_1=\ll w_{*},g\gg +\ll w^{*},g\gg =\gamma \ll w_{*},w_{*}\gg_1+\gamma\ll w^{*},w^{*}\gg_1
\]
which implies $w_*=w^*$, that is all the subsequences have the same limit. Thus $(w_\nu)_{\nu>0}$ converges weakly in ${\bb H}^1$ as well as $(w_{\nu}^-)_{\nu>0}$ in $\bb H$ by Lemma \ref{lem:sg}. 
\cqfd\end{proof}

In the harmonic case, $V ( r )=r^2/2$, much more is known. Indeed one easily checks that the exact microscopic fluctuation-dissipation equation (\ref{eq:2:fluc-diss}) holds with
\begin{equation}
\label{eq:fluct-diss-eq-harm}
h_x =\frac{1}{2\gamma} r_{x+1} (p_x +p_{x+1}) - \frac{r_{x+1}^2}{4}, \quad \varphi_x =- \frac{1}{2\gamma} (r_x r_{x+1} + p_x^2).
\end{equation}

It follows that we can compute explicitly $(z-{\mc L})^{-1}j^{e}_{0,1}$ and obtain that the value of the conductivity matrix:
\begin{equation*}
\kappa (\tau, T)= 
\left(
\begin{array}{cc}
\frac{1}{2 \gamma} & 0\\
0& \frac{1}{\gamma}
\end{array}
\right).
\end{equation*} 
This value will be recovered by considering the hydrodynamic limits of the system (Theorem \ref{thm:Sim1}) and also by establishing the validity of Fourier's law (see Theorem \ref{thm:BO1}).

\subsubsection{Expansion of the Green-Kubo formula in the weak coupling limit}

In the previous subsection we proved the existence of the Green-Kubo formula showing that the transport coefficient is well defined if some noise is added to the deterministic dynamics. We are now interested in the behavior of the Green-Kubo formula as the noise vanishes. We investigate this question in the weak coupling limit, i.e. assuming that the interaction potential is of the form $\ve V$ where $\ve \ll 1$ is the (small) coupling parameter. For notational simplicity we consider the one dimensional infinite pinned system but the arguments given below are easily generalized to the (pinned or unpinned)  $d \ge 1$-dimensional case {{\footnote{If $W=0$ the variables $q_x$ have to take values in a compact manifold.}}}. The expansion presented in this section is formal but we will precise at the end of the section what has been rigorously proved. In order to emphasize the dependence of $\kappa^e$ (denoted in the sequel by $\kappa$) in the coupling parameter $\ve$ and the noise intensity $\gamma$, we denote $\kappa$ by $\kappa(\ve, \gamma)$. Here we propose a formal expansion of the conductivity $\kappa$ in the form 
\begin{equation}
\label{eq:expK}
\kappa (\ve,\gamma)= \sum_{n \ge 2} \kappa_n (\gamma) \ve^n.
\end{equation}
Then we study rigorously the first term of this expansion $\kappa_2 (\gamma)$. It is intuitively clear that the expansion starts from $\ve^2$ since the Green-Kubo formula is a quadratic function of the energy current and that the latter is of order $\ve$ (see (\ref{current:  general formula})). 

When the system is uncoupled ($\ve=0$), the dynamics is given by the generator ${\mc L}_0={\mc A}_0 +\gamma S$ with ${\mc S}$ the flip noise defined by (\ref{eq:cons-noise-flip}) and 
\begin{equation*}
{\mc A}_0 =\sum_{x \in \ZZ} p_x \partial_{q_x} - W' (q_x) \partial_{p_x}. 
\end{equation*}
When $\ve >0$, the generator of the coupled dynamics is denoted by 
\begin{equation}
\label{eq:geneve}
  {\mc L}_{\ve} = {\mc L}_0 + \ve {\mc G}
\end{equation}
where 
$$ {\mc G} =\sum_{x \in \ZZ} \,  V'(q_x -q_{x-1})(\partial_{p_{x-1}} - \partial_{p_x}).$$

The energy of each cell, which is the sum of the internal energy and
of the interaction energy, is defined by 
\begin{equation}
  \label{eq:11}
  {\mc E}_x^\ve = {\mc E}_x + \frac \ve 2 \left(V(q_{x+1} - q_{x}) + V(q_{x} -
    q_{x-1})\right), \quad {\mc E}_x = \frac{p_x^2}{2} + W(q_x).
\end{equation}
Observe that ${\mc E}_x ={\mc E}_x^0$ is the energy of the
isolated system $x$. The dynamics generated by ${\mc L}_0$ preserves all the
  individual energies ${\mc E}_x$. The dynamics generated by ${\mc L}_\ve$ conserves the total energy. The
corresponding energy currents $\ve j_{x,x+1}$, 
defined by the local conservation law
$$
{\mc  L}_{\ve} {\mc E}_x^\ve = \ve \left(j_{x-1,x}-j_{x,x+1}\right)
$$
are given by
\begin{equation}
\label{current:  general formula}
 \ve\, j_{x,x+1} =  -\frac {\ve}{2}\,  (p_x + p_{x+1}) \cdot V'(q_{x+1} -q_{x}).
\end{equation}

Let us denote by $\mu_{\beta,\ve}=\langle \cdot \rangle_{\beta,\ve}$
the canonical Gibbs measure at temperature $\beta^{-1}>0$ defined by the Dobrushin-Lanford-Ruelle equations, which of course depends on the interaction $\ve V$. We shall assume in all the cases considered
that $\mu_{\beta,\ve}$ 
is analytical in $\ve$ for sufficiently small $\ve$ (when applied to
local functions). In particular we assume that the potentials $V$ and
$W$ are such that the Gibbs state is unique and has spatial exponential decay of correlations (this holds under great general conditions on $V$ and $W$, see \cite{GEO}).

In order to emphasize the dependence in $\ve$ we reintroduce some notation. For any given local functions $f,g$, define the semi-inner product
\begin{equation}
 \label{eq:sipt}
  \ll f,g \gg_{\beta, \ve} \ =\ \sum_{x\in \ZZ} [\langle\theta_x f, g\rangle_{\beta,\ve} -
  \langle f\rangle_{\beta, \ve} \langle g\rangle_{\beta,\ve}]. 
\end{equation}
We recall that  $\theta_x$ is the shift operator by $x$. The sum is finite in the case $\ve =0$, and converges for $\varepsilon>0$ thanks to the exponential decay of the spatial correlations.  Denote by $\bb H_{\ve} = {\bb L}^2( \ll\cdot ,\cdot \gg_{\beta,\varepsilon} )$ the corresponding closure. We define the subspace of antisymmetric functions in the velocities
\begin{equation}
 \label{eq:antisymm}
 \bb H_\ve^a = \left\{ f \in \bb H_\ve: f({\bf q},-{\bf p}) = -f({\bf q},{\bf p})\right\}.
\end{equation}
Similarly we define the subspace of symmetric functions in  $\bf p$ as
$\bb H_\ve^s$.  On local functions this decomposition of a function into symmetric and antisymmetric parts is
independent of $\ve$. Let us denote by $\mc P_{\ve}^a$ and $\mc P_{\ve}^s$
the corresponding orthogonal projections, whose definition in fact
does not depend on $\ve$. Therefore we sometimes omit the index $\ve$ in the notation. Finally, for any function $f\in {\bb L}^2(\mu_{\beta,\ve})$,  define
$$
(\Pi_\ve f) ({\mathbb {\mc E}}) =  \mu_{\beta,\ve} (f|{\mathbb{\mc E}}),\quad Q_\ve =\rm{Id}-\Pi_\ve
$$
where ${\mathbb{\mc E}}:=\{ {\mc E}_x \, ; \, x \in \ZZ \}$.
According to Theorem \ref{th:GK} the conductivity is defined by 
\begin{equation}
\label{eq:ghj1}
\kappa (\ve, \gamma)= \ve^2 \lim_{\nu \to 0} \ll j_{0,1} \, , \, (\nu -{\mc L}_{\ve})^{-1} j_{0,1} \gg_{\beta,\ve}. 
\end{equation}
It turns out that, for calculating the terms in the expansion (\ref{eq:expK}), it is convenient to choose $\nu = \ve^2\lambda$ in (\ref{eq:ghj1}), for a $\lambda >0$, and solve the resolvent equation 
\begin{equation}\label{eq:poisson}
(\lambda\ve^2-\mc L_\ve) u_{\lambda,\ve} = \ve j_{0,1}
\end{equation}
for the unknown function $u_{\lambda,\ve}$. The factor $\ve^2$ is the natural scaling in view of
the subsequent computations. 
We assume that a solution of \eqref{eq:poisson} is in the form 
\begin{equation}
  \label{eq:6}
 u_{\lambda,\ve} =\sum_{n \ge 0} U_{\lambda,n} \ve^n= \sum_{n \ge 0} (v_{\lambda,n} + w_{\lambda,n}) \ve^n,
\end{equation}
where $\Pi v_{\lambda,n} =Q w_{\lambda,n}=0$, i.e. $w_{\lambda,n} = \Pi U_{\lambda,n}$ and $v_{\lambda,n} =Q U_{\lambda,n}$. Here $\Pi=\Pi_0$ and $Q=
Q_0$ refer to the uncoupled measure $\mu_{\beta,0}${\footnote{The reason to use the orthogonal  decomposition of $U_{\lambda,n}= v_{\lambda,n} + w_{\lambda,n}$ is that at some point we will have to consider, for a given function $f$, the solution $h$ to the Poisson equation  ${\mc L}_0 h= f $ . The minimal requirement for the existence of $h$ is that $\Pi f =0$.}}. Given such an expression we can, in principle, use it in \eqref{eq:ghj1} to write 
\begin{equation}
\label{eq:kcoef}
\begin{split}
\kappa(\ve, \gamma)&=\lim_{\lambda\to 0}\sum_{n\geq0}\ve^{n+1}\ll j_{0,1},v_{\lambda,n}+w_{\lambda,n}\gg_{\beta,\ve}\\
&=\sum_{n\geq 1}\lim_{\lambda\to 0}\ve^{n}\ll j_{0,1},v_{\lambda,n-1}\gg_{\beta,\ve}
\end{split}
\end{equation}
where we have used the fact that that $\ll j_{0,1},  w_{\lambda,\ve} \gg_{\beta,\ve}=0$ and we have, 
arbitrarily, exchanged the limit with the sum. Note that this is not yet
of the type \eqref{eq:expK} since the terms in the expansion depend
themselves on $\ve$. To identify the coefficients $\kappa_n$ we
would need to expand in $\ve$ also the expectations. This is not
obvious since the functions $v_{\lambda,n}$ are non local.  

Let us consider the operator $\mf L =   \Pi \mc G \mc P^a (-\mc L_0)^{-1} \mc G\Pi.$ We show below that the operator $\mf L$ is a generator of a Markov process so that
$(\lambda- {\mf L})^{-1}$ is well defined for $\lambda>0$. Pluging (\ref{eq:6}) in (\ref{eq:poisson}) we obtain the following hierarchy
\begin{equation} 
\label{eq:w0}
\begin{split}
&v_{\lambda,0}=0,\\
&w_{\lambda,0}=(\lambda-\mf L)^{-1}\Pi \mc G \mc P^a (-\mc L_0)^{-1}j_{0,1},\\
&v_{\lambda,1}= (-\mc L_0)^{-1}\left[j_{0,1}+\mc G w_{\lambda,0}\right],\\
&w_{\lambda,n}=(\lambda-\mf L)^{-1}\Pi \mc G {\mc P}^a (-\mc L_0)^{-1}\left[-\lambda
  v_{\lambda, n-1} + Q \mc G v_{\lambda,n}\right], \qquad n\ge 1\\
&v_{\lambda,n+1}= (-\mc L_0)^{-1}\left[-\lambda v_{\lambda, n-1} + \mc G
  w_{\lambda,n} + Q\mc G v_{\lambda,n}\right], \qquad n\ge 1.
\end{split}
\end{equation}
Observe that in the previous equations the (formal) operator $(-{\mc L}_0)^{-1}$ is always applied to functions $f$ such that $\Pi f =0$ (this is the minimal requirement to have consistent equations). This is however not sufficient to make sense of the functions $v_{\lambda, n}$ and $w_{\lambda,n}$. Nevertheless, by using an argument similar to the one given in Theorem \ref{th:GK}, we have that the local operator ${\mc T}_0$ on ${\bb H}_0^a$ defined by 
$${\mc T}_0 f = \lim_{\nu \to 0} {\mc P}^a (\nu -{\mc L}_0)^{-1} f, \quad f \in \bb H_{0}^a,$$ is well defined. Therefore, it is possible to make sense, as a distribution, of 
\begin{equation}
\label{eq:alpha01}
\alpha_{01}= \Pi \mc G \mc P^a (-\mc L_0)^{-1}j_{0,1}:=\Pi \mc G {\mc T}_0 j_{0,1}.
\end{equation}
Nevertheless, the function $w_{\lambda,0}$ is still not well defined since we are not sure that $ {\mc T}_0 j_{0,1}$ is in the domain of ${\mc G}$. 

Even if the previous computations are formal a remarkable fact is that the operator ${\mf L}$, when applied to functions of the internal energies, coincides with the Markov generator ${\mf L}_{\rm{GL}}$ of a reversible Ginzburg-Landau dynamics on the internal energies. Let us denote by $\rho_{\beta}$ the distribution of the internal energies ${\mathbf{\mc E}}=\{ {\mc E}_x \, ; \, x \in \ZZ \}$ under the Gibbs measure $\mu_{\beta,0}$. It can be written in the form  
$$
d\rho_{\beta} ({\mathbf{\mc E}})= \prod_{x \in \ZZ} Z_\beta^{-1} \exp( - \beta {\mc E}_x -U({\mc E}_x)) d{\mc E}_x
$$
for a suitable function $U$. We denote the formal sum $\sum_x U({\mc E}_x)$
by ${\mc U}:={\mc U} ({\mathbf{\mc E}})$. We denote also, for a given value of the
internal energy ${\tilde {\mc E}}_x$ in the cell $x$,  by $\nu_{{\tilde {\mc E}}_x}^{x}$ the microcanonical probability measure in the cell $x$. i.e. the uniform probability measure on the manifold
$$
\Sigma_{{\tilde {\mc E}}_x}:=\{ (q_x,p_x) \in \Omega \, ; \, {\mc E}_x (q_x,p_x)
\, =\, {\tilde {\mc E}}_x \}.
$$ 
Then, the generator ${\mf L}_{\rm{GL}}$ is given by
 \begin{equation}
 \label{eq:12}
 {\mf L}_{\rm{GL}} = \sum_x e^{\mc U} (\partial_{{\mc E}_{x+1}} - \partial_{{\mc E}_x} )
 \left[e^{-\mc U} \gamma^2 ({\mc E}_x, {\mc E}_{x+1}) 
(\partial_{{\mc E}_{x+1}}  - \partial_{{\mc E}_x} ) \right],
  \end{equation}
where
\begin{equation}
\label{gamma square}
 \gamma^2 ({\tilde {\mc E}}_0, {\tilde {\mc E} }_{1})= \int_{\Sigma_{{\tilde {\mc E}}_0} \times
    \Sigma_{{\tilde {\mc E}}_{1}}} \big(  j_{0,1}\;  {\mc T}_0\,   j_{0,1} \big) \; d\nu_{{\tilde {\mc E}}_0}^0 d\nu_{{\tilde {\mc E}}_{1}}^1. 
\end{equation}
The operator ${\mf L}_{\rm{GL}} $ is well defined only if the function $\gamma^2$ has some regularity properties, that are actually proven in specific examples \cite{LO-0, DL}. We can show that the Dirichlet forms {\footnote{They are well defined even if $\gamma^2$ is not regular.}} associated to $\mf L$ and ${\mf L}_{{\rm{GL}}}$ coincide. Then in the cases where $\gamma^2$ is proven to be smooth \eqref{eq:12} is well defined and $\mf L={\mf L}_{{\rm{GL}}}$. 

\begin{proposition}[\cite{BHLLO}]
\label{prop:Lweak}
For each local smooth functions $f,g$ of the internal energies only we have
\begin{equation}\label{dir-form}
\ll g ,(-\mf L) f \gg_{\beta,0} = \ll g, (-{\mf L}_{\rm{GL}}) f \gg_{\beta,0}. 
  \end{equation}
\end{proposition}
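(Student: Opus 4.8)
The plan is to identify both Dirichlet forms with the same quadratic functional of $f$ and $g$, computed against the measure $\rho_\beta$ on the internal energies. First I would write down explicitly the Dirichlet form of $\mf L = \Pi \mc G \mc P^a (-\mc L_0)^{-1} \mc G \Pi$: for $f,g$ functions of $\mathbf{\mc E}$ only, $\Pi f = f$ and $\Pi g = g$, so
\[
\ll g, (-\mf L) f \gg_{\beta,0} = \ll g, -\Pi \mc G \mc P^a (-\mc L_0)^{-1} \mc G f \gg_{\beta,0} = \ll \mc G g, \mc P^a (-\mc L_0)^{-1} \mc G f \gg_{\beta,0},
\]
using that $\mc G$ is antisymmetric in ${\bb H}_0$ (it comes from the Liouville part, which is anti-adjoint with respect to every Gibbs measure) and that $\Pi$ is self-adjoint. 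Since $\mc G f$ and $\mc G g$ are antisymmetric in $\bp$ when $f,g$ are functions of the energies (each term $V'(q_x-q_{x-1})(\partial_{p_{x-1}}-\partial_{p_x})$ applied to an even function of $\bp$ produces an odd function — note $\partial_{p_x}{\mc E}_x = p_x$), the projection $\mc P^a$ acts as the identity on them, and the form becomes $\ll \mc G g, {\mc T}_0 \mc G f \gg_{\beta,0}$ with ${\mc T}_0$ the limiting resolvent operator of Lemma/Theorem~\ref{th:GK}.

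Next I would compute $\mc G f$ when $f = f(\mathbf{\mc E})$. Because $\partial_{p_x} f = (\partial_{{\mc E}_x} f)\, p_x$ and $\partial_{q_x}$ acting on $f$ through ${\mc E}_y$ only touches the internal energies, a short calculation gives
\[
\mc G f = \sum_x V'(q_x - q_{x-1})\, p_{x-1}\,\partial_{{\mc E}_{x-1}} f - \sum_x V'(q_x-q_{x-1})\, p_x\, \partial_{{\mc E}_x} f = \sum_x \big(-2 j_{x-1,x}\big)\,\big(\partial_{{\mc E}_{x-1}} - \partial_{{\mc E}_x}\big) (\cdot)\,,
\]
up to reindexing, where I used $j_{x,x+1} = -\tfrac12 (p_x+p_{x+1}) V'(q_{x+1}-q_x)$ together with the fact that inside the bilinear form only the part of the current that is odd under $\bp\mapsto -\bp$ survives; more precisely the relevant combination that appears is $\sum_x j_{x,x+1}\,(\partial_{{\mc E}_{x+1}}-\partial_{{\mc E}_x})f$ after symmetrizing the two sums. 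The key structural point is that $\mc G$, restricted to functions of the energies, produces a discrete-gradient combination $\sum_x \big(\nabla(\partial_{\mc E}f)\big)_x\, j_{x,x+1}$ of the \emph{currents} $j_{x,x+1}$, each localized on the bond $(x,x+1)$.

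Then I would insert this into $\ll \mc G g, {\mc T}_0 \mc G f \gg_{\beta,0}$ and use three facts: (i) under $\mu_{\beta,0}$ the cells are independent, so the currents $j_{x,x+1}$ on disjoint bonds are decorrelated and ${\mc T}_0$, being built from the product dynamics $\mc L_0$, does not couple distinct bonds; (ii) conditioning on $\mathbf{\mc E}$ and using the definition of the microcanonical measures $\nu^x_{\tilde{\mc E}_x}$ reduces $\ll j_{x,x+1}, {\mc T}_0 j_{x,x+1} \gg$ restricted to fixed energies exactly to the factor $\gamma^2(\tilde{\mc E}_x, \tilde{\mc E}_{x+1})$ defined in \eqref{gamma square}; (iii) the outer expectation over $\mathbf{\mc E}$ against the discrete gradients $\nabla(\partial_{\mc E}g)$ and $\nabla(\partial_{\mc E}f)$ reproduces precisely the Ginzburg–Landau Dirichlet form, namely $\ll g, (-\mf L_{\rm GL}) f \gg_{\beta,0} = \sum_x \int \gamma^2({\mc E}_x,{\mc E}_{x+1}) \big(\nabla \partial_{\mc E} g\big)_x \big(\nabla \partial_{\mc E} f\big)_x \, d\rho_\beta$, which one reads off from \eqref{eq:12} after an integration by parts against $\rho_\beta$. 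Matching the two expressions term by term on each bond finishes the proof.

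The main obstacle I anticipate is item (ii): justifying that the infinite-volume limiting operator ${\mc T}_0 = \lim_{\nu\to 0}\mc P^a(\nu-\mc L_0)^{-1}$ applied to the single-bond current $j_{0,1}$ really decomposes cell by cell and that its quadratic form conditioned on the energies equals the microcanonical average in \eqref{gamma square} — one must commute the $\nu\to 0$ limit with the conditional expectation on $\mathbf{\mc E}$, and handle the fact that ${\mc T}_0 j_{0,1}$ need not be a bona fide function (only a distribution, as flagged after \eqref{eq:alpha01}). This is why the statement is phrased at the level of Dirichlet forms, which are well defined even when $\gamma^2$ is not regular: one works with the closed quadratic form $\ll j_{0,1}, {\mc T}_0 j_{0,1}\gg$ directly rather than with ${\mc T}_0 j_{0,1}$ pointwise, and invokes the spectral-gap/variational characterization from the proof of Theorem~\ref{th:GK} to control the $\nu\to 0$ limit uniformly in the conditioning energies.
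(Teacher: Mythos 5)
The paper itself never proves Proposition~\ref{prop:Lweak}: it is stated with a pointer to \cite{BHLLO}, so your proposal can only be measured against what such a proof must contain. Your architecture is the right one --- reduce $\ll g,(-\mf L)f\gg_{\beta,0}$ to $\ll \mc G g,\, {\mc T}_0\, \mc G f\gg_{\beta,0}$, exploit that $\mc L_0$ commutes with multiplication by functions of $\mathbf{\mc E}$ and that the cells are conditionally independent given $\mathbf{\mc E}$ with microcanonical marginals, and read off the Ginzburg--Landau Dirichlet form from (\ref{eq:12}) --- and your last paragraph honestly isolates the analytic subtlety (the distributional nature of ${\mc T}_0 j_{0,1}$ and the exchange of the $\nu\to 0$ limit with the conditioning).

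There is, however, a genuine gap in the two algebraic reductions, and in both places the justification you offer (parity) is the wrong one. First, $\mc G$ is \emph{not} anti-self-adjoint with respect to $\mu_{\beta,0}$: only $\mc A_0+\ve\mc G$ is anti-adjoint, and only with respect to $\mu_{\beta,\ve}$. Integration by parts against $\mu_{\beta,0}$ gives $\mc G^{*}=-\mc G+\beta\sum_x V'(q_x-q_{x-1})(p_{x-1}-p_x)=-\mc G-\beta\,\mc A_0\bigl(\sum_x V(q_x-q_{x-1})\bigr)$, and the zeroth-order correction is tested against $\mc P^a(-\mc L_0)^{-1}\mc G f$, which is \emph{odd} in $\bp$, so the correction does not vanish by parity. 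Second, the exact identity for $f=f(\mathbf{\mc E})$, writing $a_y=\partial_{{\mc E}_y}f$, is
\begin{equation*}
\mc G f=\sum_x j_{x-1,x}\,(a_x-a_{x-1})\;-\;\tfrac12\,\mc A_0\Bigl[\sum_x (a_{x-1}+a_x)\,V(q_x-q_{x-1})\Bigr],
\end{equation*}
and the second term is \emph{also} odd in $\bp$, so ``only the odd part of the current survives'' cannot be the reason it disappears. What actually kills both remainders is that they are of the form $\mc A_0\Psi$ with $\Psi$ even in $\bp$ and $\mc S\Psi=0$; then $(\nu-\mc L_0)^{-1}\mc A_0\Psi=-\Psi+\nu(\nu-\mc L_0)^{-1}\Psi$, $\mc P^a\Psi=0$, and hence ${\mc T}_0\mc A_0\Psi=0$ (and dually $\ll\mc A_0\Psi,{\mc T}_0 w\gg_{\beta,0}=0$ for $w$ odd). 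This observation must be stated and used explicitly; without it the reduction to the bond currents is unjustified. A smaller omission of the same kind: conditional independence of the cells only disposes of the cross terms $\Pi\bigl(j_{x,x+1}\,{\mc T}_0 j_{y,y+1}\bigr)$ for $|x-y|\ge 2$; adjacent bonds share a cell (and $V'(q_{x+1}-q_x)$ couples two cells), so the case $|x-y|=1$ needs its own argument before the diagonal terms can be identified with $\gamma^2({\mc E}_x,{\mc E}_{x+1})$ via (\ref{gamma square}).
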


The operator ${\mf L}_{\rm{GL}}$ is the generator of a Ginzburg-Landau dynamics
which is reversible with respect to $\rho_{\beta}$, for any $\beta>0$. It is conservative
in the energy $\sum_x  {\mc E}_x$ and the corresponding currents are given by $\theta_x \alpha_{0,1}$ where $\alpha_{0,1}$ has been defined in (\ref{eq:alpha01}). The corresponding finite size
dynamics appears in \cite{LO-0,DolgoL} as the weak coupling limit of a
finite number $N$ (fixed) of cells weakly coupled by  a potential
$\varepsilon V$ in the limit $\varepsilon \to 0$ when time $t$ is rescaled
as $t \ve^{-2}$. Moreover, the hydrodynamic limit of the Ginzburg-Landau dynamics is then given (in the diffusive time scale $tN^2$, $N \to + \infty$), by a heat equation with diffusion coefficient which coincides with $\kappa_2$ as given by \eqref{eq:gke} below (\cite{Var0}). This is summarized in Figure \ref{fig:wclimit000}. 

\begin{center}
\begin{figure}
\begin{tikzpicture}[node distance = 4cm, auto, scale=0.75]
 \node[draw, rectangle,fill=blue!25] (MS) at (-5,5) {$N$ cells coupled by $\ve V$}; 
\node[draw,rectangle,fill=blue!25] (HE) at (5,5) {$\partial_t T = \nabla ({\kappa} (\ve,\gamma) \nabla T)$};
\draw[->,>=latex,dashed] (MS) -- (HE) node[sloped,midway]{$tN^2$, $N \to \infty$, $\varepsilon \sim 1$};
\node[draw,rectangle,fill=blue!25] (GL) at (-5,0) {Ginzburg-Landau dynamics for $N$ particles};
 \draw[->,>=latex] (MS)  -- (GL) node[midway]{${\varepsilon}^{-2} t$, $\varepsilon \to 0$}; 
\node[draw,rectangle,fill=blue!25] (HE2) at (5,4) {$\partial_t T = \nabla ({\kappa}_2 (\gamma) \nabla T)$};
 \draw[->,>=latex] (GL) -- (HE2) node[sloped, pos=0.7]{$tN^2$, $N \to \infty$}; 
\end{tikzpicture}
\caption{The relation between the hydrodynamic limit, the weak coupling limit and the Green-Kubo expansion. The dotted arrow (hydrodynamic limits in the diffusive time scale) has not been proved. The weak coupling limit (vertical arrow) has been proved  in \cite{LO-0} (see also \cite{DolgoL}) and the diagonal arrow (hydrodynamic limits for a Ginzburg-Landau dynamics) has been obtained in \cite{Var0}  in some cases which however do not cover our cases. In \cite{BHLLO} it is argued that $\kappa (\ve, \gamma) \sim \ve^2 \kappa_2 (\gamma)$ as $\ve \to 0$.}
\label{fig:wclimit000}
\end{figure}
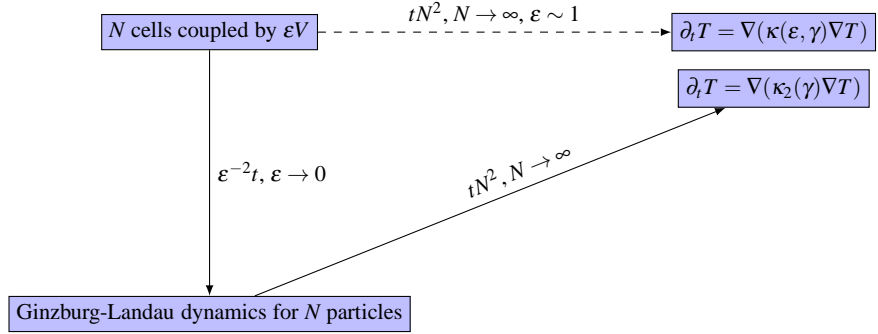
\end{center}
According to the previous expansion it makes sense to define $\kappa_2 (\gamma)$ by
\begin{equation}
\label{eq:k2345}
\kappa_2 (\gamma)= \lim_{\ve \to 0} \lim_{\lambda \to 0} \left\{  \ll j_{0,1}, {\mc T}_0 j_{0,1} \gg_{\beta,\ve} + \ll j_{0,1}, \mc T_0 {\mc G} w_{\lambda,0} \gg_{\beta,\ve} \right\}
\end{equation}
if the limits exist. In fact, a priori, it is not even clear that the term ${\mc T}_0 {\mc G} w_{\lambda,0}$ makes sense since $w_{\lambda,0}$ is not well defined. In \cite{BHLLO} we argue that 
\begin{equation} 
\label{eq:gke}
\begin{split}
\kappa_2 (\gamma)  = & \left\langle  \gamma^2_{0,1}
\right\rangle_{\beta} -  \ll  \alpha_{0,1}  \, , \, (-{\mf L}_{GL})^{-1} \alpha_{0,1} \gg_{\beta}.  
  \end{split}
\end{equation}
Here $\langle \cdot \rangle_{\beta}$ and $\ll \cdot \gg_{\beta}$ refer to the scalar products w.r.t. $\rho_{\beta}$. In the special case $W=0$ {\footnote{If $W=0$ the variables $q_x$ have to take values in a compact manifold.}}, we prove rigorously in \cite{BHLLO} that we can make sense for any $\lambda,\ve$ of the term in the righthandside of (\ref{eq:k2345}) and that (\ref{eq:gke}) is valid, supporting the conjecture that (\ref{eq:gke}) is valid in more general situations. Observe that (\ref{eq:gke}) is the Green-Kubo formula for the diffusion coefficient of the Ginzburg-Landau dynamics. 

In specific examples, it is possible to study the behavior of $\kappa_2 (\gamma)$ defined by (\ref{eq:gke}) in the vanishing noise limit $\gamma \to 0$:
\begin{enumerate}
\item Harmonic chain: it is known that the conductivity of the (deterministic) harmonic chain is $\kappa(\ve,0)=\infty$. If $\gamma>0$, $\kappa (\ve,\gamma)= c \gamma^{-1} \ve^{-2}$, $c>0$ a constant, and we get thus that $\lim_{\gamma \to 0} \kappa_2 (\gamma)=\infty$.
\item Disordered pinned harmonic chain: $V$ is quadratic and the one-site potential $W$ is site-dependent given by $W_x (q)=\nu_{x} q^2$ where $\{\nu_x \, ; \, x \in \ZZ\}$ is a sequence of independent identically distributed positive bounded random variables {\footnote{Even if this model does not belong {\textit{stricto sensu}} to the class of models discussed above it is easy to generalize to this case, at least formally, the previous results.}}. It is known (\cite{BH}) that $\kappa (\ve, 0)=0$ so that $\kappa_2 (\ve, 0) =0$. It can be proved that $\kappa_2 (\gamma)$ vanishes as $\gamma$ goes to $0$.
\item Harmonic chain with quartic pinning potential: $V$ is quadratic and $W(q)=q^4$. Then it can be shown that $\limsup_{\gamma \to 0} \kappa_2 (\gamma) < \infty$. This upper bound does not prevent the possibility that $\lim_{\gamma \to 0} \kappa_2 (\gamma)=0$.  
\end{enumerate} 

To prove these results we use the upper bound $\kappa_2 (\gamma)  \le  \left\langle  \gamma^2_{0,1}
\right\rangle_{\beta}$. Recalling (\ref{gamma square}) we see that if we are able to compute ${\mc T}_0 j_{0,1}$ then we can estimate $\left\langle  \gamma^2 ({\mc E}_{0}, {\mc E}_1)
\right\rangle_{\beta}$. It is exactly what is done in \cite{BHLLO} for the specific cases above.

It would be highly interesting to have a rigorous derivation of the formal expansion above. Bypassing this problem, another relevant issue is to decide if genuinely $\lim_{\gamma \to 0} \kappa_2 (\gamma) $ is zero or not. Some authors (see \cite{dRH0} and references therein) conjecture that, in some cases, the conductivity of the deterministic chain $\kappa(\ve,0)$ has a trivial weak coupling expansion ($\kappa(\ve,0) ={\mc O} (\ve^n)$ for any $n \ge 2$). Showing that  $\kappa_2 (\gamma) \to 0$ as $\gamma \to 0$ would support this conjecture.

\subsection{Harmonic chain with velocity-flip noise}

In this section we assume that $V ( r )=r^2/2$.

\subsubsection{Hydrodynamic limits}
\label{subsec:hl-vf}
As explained in the beginning of this chapter an interesting problem consists to derive a diffusion equation for a chain of oscillators perturbed by an energy conserving noise. Consider a one dimensional unpinned chain of $N$ harmonic oscillators with periodic boundary conditions perturbed by the velocity flip noise in the diffusive scale. In other words let $\omega (t) =(\br (t), \bp(t))_{t \ge 0}$ be the process  with generator $N^2 {\mc L}_N=N^2 \left[ {\mc A}_N + \gamma {\mc S}_N \right]$ where ${\mc S}_N$ is given by (\ref{eq:cons-noise-flip}), $\ZZ^d$ being replaced by $\TT_N$, the discrete torus of length $N$, and ${\mc A}_N$ is the Liouville operator of a chain of unpinned harmonic oscillators with periodic boundary conditions. The system conserves two quantities: the total energy $\sum_{x \in \TT_N} {\mc E}_x$, ${\mc E}_x =\frac{p_x^2}{2} +\frac{r_x^2}{2}$, and the total deformation of the lattice $\sum_{x \in \TT_N} r_x$. Consequently, the Gibbs equilibrium measures $\nu_{\beta, \tau}$ are indexed by two parameters $\beta>0$, the inverse temperature, and $\tau \in \RR$, the pressure. They take the form
\begin{equation*}
d\nu_{\beta, \tau} (d\br, d \bp) = \prod_{x \in \TT_N}{\mc Z}^{-1} (\beta,\tau) \,  \exp \{ -\beta ({\mc E}_x -\tau r_x)\} dr_x dp_x
\end{equation*}
where
$${\mc Z} (\beta, \tau) =  \cfrac{2\pi}{\beta} \exp(\beta \tau^2 /2).$$
Observe the following thermodynamic relations 
$$\int {\mc E}_x \, d\nu_{\beta, \tau} =\beta^{-1} + \tfrac{\tau^2}{2}, \quad \int {r}_x \, d\nu_{\beta, \tau} = \tau$$
or equivalently
\begin{equation*}
\tau =\int {r}_x \, d\nu_{\beta, \tau}, \quad \beta = \left\{ \int {\mc E}_x \, d\nu_{\beta, \tau}  - \tfrac{ \Big( \int {r}_x \, d\nu_{\beta, \tau} \Big)^2}{2}\right\}^{-1}.
\end{equation*}

\begin{definition}
Let $\TT=[0,1)$ be the continuous torus. Let ${\mf e}_0: \TT \to \RR$ and ${\mf r}_0:\TT \to \RR$ be two continuous macroscopic profiles such that ${\mf e}_0 > \tfrac{{\mf r}_0^2}{2}$. A sequence of probability measures $(\mu^N)_{N \ge 1}$ on $(\RR \times \RR)^{\TT_N}$ is said to be a sequence of Gibbs local equilibrium states associated to the energy profile ${\mf e}_0$ and the deformation profile ${\mf r}_0$ if 
\begin{equation*}
d\mu^N (d\br, d\bp) = \prod_{x \in \TT_N} {\mc Z}^{-1} (\beta_0 (\tfrac{x}{N}) ,\tau_0 (\tfrac{x}{N}))\, \exp \{ -\beta_0 (x/N)  ({\mc E}_x -\tau_0 (x/N) r_x)\} dr_x dp_x
\end{equation*}
where the functions $\beta_0$ and $\tau_0$ are defined by
\begin{equation*}
\tau_0 = {\mf r}_0, \quad \beta_0 = \{ {\mf e}_0 - \tfrac{{\mf r}_0^2}{2}\}^{-1}.
\end{equation*}
\end{definition}

Once we have the microscopic fluctuation-dissipation equation (see (\ref{eq:fluct-diss-eq-harm})) and assuming the propagation of local equilibrium in the diffusive time scale it is easy to guess the hydrodynamic equations followed by the system. In \cite{Sim} the following theorem is proved.

\begin{theorem}[\cite{Sim}]
\label{thm:Sim1}
Consider the unpinned velocity-flip model with periodic boundary conditions. Let $(\mu^N)_N$ be a sequence of Gibbs local equilibrium states {\footnote{One can consider more general initial states, see \cite{Sim}.}} associated to a bounded energy profile ${\mf e}_0$ and a deformation profile ${\mf r}_0$. For every $t \ge 0$, and any test continuous functions $G,H: \TT \to \RR$, the random variables 
\begin{equation}
\label{eq:vf-empiricaldensities}
\Big(\frac1N\sum_{x \in\TT_N} {\displaystyle{ G(\tfrac{x}{N}) r_x (tN^2), \frac1N \sum_{x \in\TT_N} H(\tfrac{x}{N}) {\mc E}_x (tN^2)}} \Big)
\end{equation}
converge in probability as $N \to \infty$ to $$\Big(\int_{\TT} G(y) {\mf r} (t,y) dy, \int_{\TT} H(y) {\mf e} (t,y)dy \Big)$$ where $\mf r$ and $\mf e$ are the (smooth) solutions to the hydrodynamical equations
\begin{equation}
\label{eq:2:hl-re1}
\begin{cases}
\partial_t {\mf r} =\frac 1{\gamma}\,  \partial_y^2 \, {\mf r},\\
\partial_t {\mf e}= \frac1{2 \gamma} \,  \partial_y^2 \, \left[ {\mf e} \, + \, \frac{ {\mf r}^2}{2} \right]
\end{cases}
,\quad y \in \TT,
\end{equation} 
with initial conditions ${\mf r} (0,y) ={\mf r}_0 (y)$, ${\mf e}(0,y)={\mf e}_0 (y)$.
\end{theorem}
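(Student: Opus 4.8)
The plan is to prove Theorem~\ref{thm:Sim1} by the relative entropy method of \cite{OVY}. The crucial structural input is that in the harmonic case the model is a \emph{gradient} system possessing an exact microscopic fluctuation--dissipation equation for both conserved currents. For the deformation one checks directly that $j^r_{x,x+1}=-p_x=\nabla(-r_x/\gamma)+\mc L_N(p_x/\gamma)$, and for the energy the identity $j^e_{x,x+1}=\nabla\varphi_x+\mc L_N h_x$ holds with $\varphi_x,h_x$ exactly as in (\ref{eq:fluct-diss-eq-harm}). Because the relevant Gibbs states are products of Gaussians, $\langle -r_x/\gamma\rangle_{\beta,\tau}=-\mf r/\gamma$ and $\langle\varphi_x\rangle_{\beta,\tau}=-\tfrac1{2\gamma}(\beta^{-1}+\tau^2)=-\tfrac1{2\gamma}(\mf e+\tfrac{\mf r^2}{2})$, where $\mf r=\tau$ and $\mf e=\beta^{-1}+\tfrac{\tau^2}{2}$; since the macroscopic current of a conserved quantity is the spatial gradient of the local-equilibrium average of the corresponding $\varphi$, the two conservation laws reproduce exactly the right-hand sides of (\ref{eq:2:hl-re1}). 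Note that (\ref{eq:2:hl-re1}) is triangular: $\mf r$ solves the heat equation $\partial_t\mf r=\gamma^{-1}\partial_y^2\mf r$ autonomously, and then $\mf e$ solves the linear heat equation $\partial_t\mf e=\tfrac1{2\gamma}\partial_y^2\mf e+\tfrac1{4\gamma}\partial_y^2(\mf r^2)$ with a smooth source; moreover $\mf e-\mf r^2/2$ satisfies $\partial_t(\mf e-\tfrac{\mf r^2}2)=\tfrac1{2\gamma}\partial_y^2(\mf e-\tfrac{\mf r^2}2)+\tfrac1\gamma(\partial_y\mf r)^2$, so the temperature stays positive by the maximum principle. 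Hence for continuous initial data $(\mf e_0,\mf r_0)$ with $\mf e_0>\mf r_0^2/2$ the system has unique solutions, smooth for $t>0$ and continuous up to $t=0$, with parameters in a compact range on compact time intervals — all that the method requires (a vanishing initial layer being treated separately if needed).

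Let $\mu^N_t$ be the Gibbs local equilibrium state with parameters $\beta_0(t,x/N)$, $\tau_0(t,x/N)$ built from $(\mf r(t,\cdot),\mf e(t,\cdot))$, let $f^N_t$ be the density with respect to $\mu^N_t$ of the law at time $t$ of the process with generator $N^2\mc L_N$, and set $H_N(t)=\int f^N_t\log f^N_t\,d\mu^N_t$. A now-standard computation — using that $\mc S_N$ is self-adjoint in $L^2(\mu^N_t)$ (the local Gibbs states are symmetric in each $p_x$), that $\partial_t\log(d\mu^N_t/d\mu^N_0)$ is matched to leading order by the Liouville part because $(\mf r,\mf e)$ solves (\ref{eq:2:hl-re1}), and moving the $\nabla\varphi_x$ and $\mc L_N h_x$ contributions of the currents by discrete summation by parts and by Dynkin's formula — yields a differential inequality of the form $\tfrac{d}{dt}H_N(t)\le -\gamma N^2 D_N(f^N_t)+\int R^t_N\,f^N_t\,d\mu^N_t$, where $D_N$ is the Dirichlet form of $\mc S_N$ and $R^t_N=\tfrac1N\sum_x\Psi^N_{t,x}$ is an average of local functions built from $\varphi_x$, $h_x$, their translates and the first two spatial derivatives of the profiles, whose conditional $\mu^N_t$-means are negligible by the choice of profiles. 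Bounding $\int R^t_N f^N_t\,d\mu^N_t$ by the entropy inequality $\int\Psi f\,d\mu\le\delta^{-1}H(f)+\delta^{-1}\log\int e^{\delta\Psi}\,d\mu$ together with a \emph{uniform} exponential moment estimate for the at-most-quadratic functions $\Psi^N_{t,x}$ under the local Gibbs states (elementary, since those are Gaussian with parameters in a compact range) gives $\tfrac{d}{dt}H_N(t)\le C\,H_N(t)+o(N)$. Since the initial law is precisely $\mu^N_0$ one has $H_N(0)=0$ (and $o(N)$ for the more general initial data of \cite{Sim}), so Gronwall gives $H_N(t)=o(N)$ for every $t$; the convergence in probability of the functionals in (\ref{eq:vf-empiricaldensities}) to $\int G\,\mf r(t,\cdot)$ and $\int H\,\mf e(t,\cdot)$ then follows from $H_N(t)=o(N)$ by applying the entropy inequality to bounded-block truncations of these functionals and using equivalence of ensembles to replace block averages of $r_x$, $p_x^2$ and $r_xr_{x+1}$ by the corresponding functions of the empirical fields. (One could instead run the GPV scheme: tightness, identification of limit points via Dynkin's formula and the fluctuation--dissipation equation, one- and two-block estimates, and uniqueness for (\ref{eq:2:hl-re1}).)

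The two genuinely delicate points are exactly the difficulties emphasized at the start of this chapter. First, non-compactness: the exponential moment bounds above, and the control needed to make the Dynkin/summation-by-parts manipulations legitimate for the unbounded functions $\varphi_x,h_x$, presuppose an a priori bound on the energy that rules out concentration on a site; for the harmonic chain — the favourable case — this can be extracted from the entropy bound itself combined with the Gaussian structure (quadratic functions of independent Gaussians have finite exponential moments for small parameter, with constants uniform in $N$ and $t$), but making this quantitative along the whole evolution and compatible with the entropy estimate is the technical heart of the proof. Second, the equation for $\mf e$ is nonlinear in $\mf r$, so the Taylor expansion underlying the relative entropy scheme must be carried to second order: the term $r_xr_{x+1}$ in $\varphi_x$ forces one to control the fluctuation of the empirical deformation around its profile, which enters quadratically; this is handled using that the local Gibbs states are product measures (so $r$-correlations vanish trivially) and that $\tau_0$ is smooth, whence the second-order contribution is $O(N)\times(\text{variation of }\tau_0\text{ across a block})^2=o(N)$. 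Verifying these two estimates, together with the equivalence-of-ensembles replacement for the quadratic observable $r_xr_{x+1}$, is where essentially all the work lies; the rest is the by-now classical implementation of the relative entropy method.
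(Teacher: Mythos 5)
Your overall strategy (Yau's relative entropy method, the exact fluctuation--dissipation identities, a Gronwall bound giving $H_N(t)=o(N)$, then the entropy inequality plus large deviations under the explicit product local Gibbs states) is the same as the paper's. But there is a genuine gap, and it sits exactly where you gloss over the computation. The model is \emph{not} a gradient system in the paper's sense: the energy current is $j^e_{x,x+1}=\nabla\varphi_x+{\mc L}h_x$ with $h_x\neq 0$ (see (\ref{eq:fluct-diss-eq-harm})), and the term ${\mc L}h_x$ cannot be disposed of ``by Dynkin's formula'' inside the entropy production bound. The inequality of Lemma \ref{entropy} is pointwise in time: one must bound $\int (\phi_t^N)^{-1}\left(N^2{\mc L}_N^*\phi_t^N-\partial_t\phi_t^N\right)d\mu_t^N$ by $C H_N(t)+o(N)$ for each $t$, and with the plain local Gibbs state as reference the summation by parts leaves a term of the form $\sum_x(\nabla v)(t,x/N)\,{\mc L}_N h_x$, of order $N$, which is neither a martingale increment (there is no time integral at this stage) nor a function of the block-averaged conserved fields, so neither the one-block estimate nor the entropy inequality can absorb it. This is precisely why the paper replaces the local Gibbs state by the corrected state (\ref{eq:firstoderGibbs}) carrying the extra factor $\exp\{\tfrac1N F(t,x/N)(\theta_x g)\}$ chosen from the fluctuation--dissipation relation: that correction generates in $N^2{\mc L}_N^*\phi_t^N/\phi_t^N$ exactly the terms cancelling the ${\mc L}h_x$ contribution and produces the microscopic Taylor expansion of Proposition \ref{prop:M1}, to which the one-block estimate (this is where macro-ergodicity enters) is then applied. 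Without this modification the Gronwall inequality (\ref{eq:entprodbound}) cannot be closed, so your proof as written does not go through.

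Two secondary points. First, your handling of the nonlinearity (the $r_x r_{x+1}$ term) by product structure and smoothness of $\tau_0$ is not a substitute for the replacement of the empirical block current $\tilde J^k_\ell$ by $H_k(\tilde\xi_\ell)$, which is a statement about the \emph{actual} law $\mu_t^N$, not about the reference measure. Second, on non-compactness the paper does not extract energy bounds ``from the entropy bound itself''; it uses the structural fact that the harmonic velocity-flip dynamics preserves the class of mixtures of Gaussian measures, so all moments are controlled through the covariance matrices appearing in the mixture. Your suggestion, which reads as circular, would need to be replaced by this (or an equivalent) a priori argument.
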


The proof of this theorem is based on Yau's relative entropy method (\cite{yau1}, \cite{OVY}). The general strategy is simple. Let $\mu_t^N$ be the law of the process at time $tN^2$ starting from $\mu^N$ and let ${\tilde \mu}^N_t$ be a sequence of Gibbs local equilibrium state corresponding to the deformation profile ${\mf r}_t (\cdot):={\mf r} (t,\cdot)$ and energy profile ${\mf e}_t (\cdot):={\mf e}(t,\cdot)$ solution of (\ref{eq:2:hl-re1}). We expect that since $\mf e$ and ${\mf r}$ are the hydrodynamic profiles, the probability measure of the process $\mu_t^N$ is close, in some sense, to the local Gibss state ${\tilde \mu}_t^N$. Yau's relative entropy method consists to show that the entropic distance {\footnote{There is some abuse of language here since the relative entropy is not a distance between probability measures.}}  
\begin{equation}
\label{eq:Hyauest}
H_N (t):=H(\mu_t^N | {\tilde \mu}_t^N) =o(N)
\end{equation}
between the two states is relatively small. Assuming (\ref{eq:Hyauest}), in order to prove for example the convergence of the empirical energy,  we use the entropy inequality {\footnote{It is a trivial consequence of the definition (\ref{eq:ent009}).}} which states that for any $\alpha>0$ and test function $\phi$
\begin{equation}
\label{eq:ent-inequ}
\int \phi d\mu_t^N \le \tfrac{H(\mu_t^N | {\tilde \mu}_t^N) }{\alpha} +\cfrac{1}{\alpha} \log \left( \int e^{\alpha \phi} d{\tilde \mu}_t^N \right).
\end{equation}
We take then $\alpha=\delta N$, $\delta>0$, and 
$$\phi=\left| \frac1N \sum_{x \in\TT_N} H({\tfrac{x}{N}}) {\mc E}_x  - \int_{\TT} H(y) {\mf e} (t,y)dy \right|.$$
Since ${\tilde \mu}_t^N$ is fully explicit and even product, by using large deviations estimates, it is possible to show that
\begin{equation}
\limsup_{N \to \infty} \cfrac{1}{\delta N} \log \left( \int e^{\delta N \phi} d{\tilde \mu}_t^N \right) =I(\delta)
\end{equation}
where $I(\delta ) \to 0$ as $\delta \to 0$. By using (\ref{eq:Hyauest}), we are done. It remains then to prove (\ref{eq:Hyauest}) and for this we rely on a Gronwall inequality for the entropy production ($C>0$ is a constant)
\begin{equation}
\label{eq:entprodbound}
\partial_t H_N \le C H_N (t) + o(N).
\end{equation}
The proof of (\ref{eq:entprodbound}) is quite evolved and we refer the interested reader to \cite{Sim}, \cite{BO-livre1} (see also \cite{KL} for some overview on the subject). It is in this step that the macro-ergodicity of the dynamics is used in order to derive the so-called one-block estimate.  

For non-gradient systems, i.e. systems such that the microscopic currents of the conserved quantities are not given by discrete gradients {\footnote{Observe that if a system is gradient then a microscopic fluctuation-dissipation equation (\ref{eq:2:fluc-diss}) holds with a zero fluctuating term.}}, the previous strategy has to be modified. Indeed, in order to have (\ref{eq:Hyauest}) it is necessary to replace the local equilibrium Gibbs state ${\tilde \mu}_t^N$ by a local equilibrium state with a first order correction term of the form
\begin{equation}
\label{eq:firstoderGibbs}
\begin{split}
&d{\hat \mu}_t^{N} (d\br, d\bp) \\
&= Z_{t,N}^{-1} \, \prod_{x \in \TT_N} \exp \left\{ -\beta_t (x/N)  ({\mc E}_x -\tau_t (x/N) r_x) +\tfrac{1}{N} F(t, x/N) (\theta_x g) (\br, \bp) \right \} dr_x dp_x
\end{split}
\end{equation}
where $Z_{t,N}$ is a normalization constant, 
\begin{equation*}
\tau_t = {\mf r}_t, \quad \beta_t = \{ {\mf e}_t - \tfrac{{\mf r}_t^2}{2}\}^{-1}
\end{equation*}
and the functions $F$ and $g$ are judiciously chosen. The choice is guided by the fluctuation-dissipation relation (\ref{eq:fluct-diss-eq-harm}) and done in order to obtain the first order "Taylor expansion" (\ref{tay}) below. 

Let $\Omega^N =(\RR \times \RR)^{\TT_N}$ be the configurations space and denote
 \begin{equation} 
 {\hat H}_N(t):=H\left(\mu_t^N \vert {\hat \mu}_t^N \right)=\int_{\Omega^N} f_t^N(\omega) \log \frac{f_t^N(\omega)}{\phi_t^N(\omega)}  d\nu_* (\omega)\ , 
 \label{eq:relentr2} 
 \end{equation}
 where  $f_t^N$ is the density of $\mu_t^N$ with respect to the Gibbs reference measure $\nu_*:=\nu_{1,0}$.  In the same way, $\phi_t^N$ is the density of ${\hat \mu}_t^N$ with respect to $\nu_*$ (which is fully explicit). The goal is to get (\ref{eq:entprodbound}) with $H_N$ replaced by ${\hat H}_N$ .
 
 We begin with the following entropy production bound. Let us denote by ${\mc L}_N^*=-\mc A_N+\gamma {\mc S}_N$ the adjoint of $\mc L_N$ in $\mathbb{L}^2(\nu_*)$. 
 
\begin{lemma}
\label{entropy}
\begin{equation*}
\partial_t {\hat H} _N(t) \leq  \int \frac{1}{\phi_t^N}\left(N^2\mc L_N^*\phi_t^N-\partial_t\phi_t^N\right) f_t^N d\nu_{*}= \int \left[  \frac{1}{\phi_t^N}\left(N^2\mc L_N^*\phi_t^N-\partial_t\phi_t^N\right) \right] d\mu_t^N \ .
\end{equation*}
\end{lemma}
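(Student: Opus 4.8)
The plan is to differentiate ${\hat H}_N(t)=\int f_t^N \log(f_t^N/\phi_t^N)\, d\nu_*$ in time, insert the forward Kolmogorov equation, and then bound the resulting entropy production term by an elementary convexity inequality. Since $\mu_t^N$ is the law at (macroscopic) time $t$ of the process with generator $N^2\mc L_N$, its density $f_t^N$ with respect to $\nu_*$ solves $\partial_t f_t^N=N^2 \mc L_N^* f_t^N$. Using the identity $\partial_t\big(f\log(f/\phi)\big)=(\partial_t f)\log(f/\phi)+\partial_t f-(f/\phi)\,\partial_t\phi$ together with $\int f_t^N\, d\nu_*\equiv 1$, so that the term $\int \partial_t f_t^N\, d\nu_*$ vanishes, I would obtain
\begin{equation*}
\partial_t {\hat H}_N(t)=N^2\int \big(\mc L_N^* f_t^N\big)\,\log\frac{f_t^N}{\phi_t^N}\, d\nu_* \;-\; \int \frac{f_t^N}{\phi_t^N}\,\partial_t\phi_t^N\, d\nu_* .
\end{equation*}
By definition of the adjoint in ${\bb L}^2(\nu_*)$ the first term equals $N^2\int f_t^N\, \mc L_N\big(\log(f_t^N/\phi_t^N)\big)\, d\nu_*$.

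The core estimate is the pointwise inequality $g\, \mc L_N(\log g)\le \mc L_N g$ for $g:=f_t^N/\phi_t^N>0$, which I would check on the two pieces of $\mc L_N=\mc A_N+\gamma {\mc S}_N$ separately. The Liouville operator $\mc A_N$ is a first order derivation, so $\mc A_N(\log g)=\mc A_N(g)/g$ and hence $g\, \mc A_N(\log g)=\mc A_N g$, with equality. For the flip noise, ${\mc S}_N$ is a superposition over $x$ of elementary jump operators $h\mapsto\tfrac12(h^{x}-h)$, where $h^{x}$ denotes $h$ evaluated at the configuration obtained by reversing the sign of $p_x$; the elementary bound $a\log(b/a)\le b-a$, valid for all $a,b>0$, applied at each $x$ with $a=g$ and $b=g^{x}$, gives $g\,{\mc S}_N(\log g)\le {\mc S}_N g$ pointwise. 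Multiplying by $\phi_t^N>0$, integrating against $\nu_*$, and using the adjoint relation once more, the first term above is at most $N^2\int \phi_t^N\, \mc L_N(f_t^N/\phi_t^N)\, d\nu_*=N^2\int \frac{f_t^N}{\phi_t^N}\,\mc L_N^*\phi_t^N\, d\nu_*$. Combining this with the $\partial_t\phi_t^N$ contribution and rewriting $\frac{f_t^N}{\phi_t^N}\, d\nu_*=\frac{1}{\phi_t^N}\, d\mu_t^N$ gives precisely the claimed inequality and its reformulation on the right.

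The main obstacle is not this algebra but its rigorous justification on the non compact phase space $\Omega^N=(\RR\times\RR)^{\TT_N}$: one needs $\phi_t^N$ smooth and strictly positive (true here, since it is the fully explicit density of (\ref{eq:firstoderGibbs}), a Gaussian-type measure times the bounded exponential correction built from $F$), one needs $f_t^N$ to be a genuine, positive, sufficiently regular and fast decaying solution of the forward equation (a standard but nontrivial regularization input, the more so because ${\mc S}_N$ alone is degenerate), and one needs these facts to license differentiation under the integral sign, the integration by parts defining $\mc L_N^*$, and the absence of boundary terms at infinity. In the harmonic periodic setting these a priori bounds can be obtained from explicit Gaussian computations together with the boundedness of the temperature and pressure profiles and of $F$; granting them, the argument is as above.
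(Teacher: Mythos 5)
Your proposal is correct and follows essentially the same route as the paper: differentiate the relative entropy, insert the Fokker--Planck equation $\partial_t f_t^N=N^2\mc L_N^*f_t^N$, pass to the adjoint, apply the pointwise bound $g\,\mc L_N(\log g)\le \mc L_N g$ for $g=f_t^N/\phi_t^N$, and pass back to the adjoint. The only cosmetic difference is that you verify this key inequality separately on the derivation part $\mc A_N$ (equality) and on the jump part ${\mc S}_N$ (via $a\log(b/a)\le b-a$), whereas the paper simply invokes Jensen's inequality, and you make explicit the regularity caveats that the paper dismisses with ``assuming it is smooth to simplify.''
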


\begin{proof}
We have that $f_t^N$ solves the Fokker-Plack equation $\partial_t f_t^N = N^2 {\mc L}_N^* f_t^N$. Assuming it is smooth to simplify, we have
\begin{equation*}
\begin{split}
\partial_t {\hat H} _N(t)& = \int \partial_t f^N_t [ 1+ \log f_t^N] d\nu_* - \int \partial_t f_t^N \log \phi_t^N d\nu_* - \int \partial_t \phi_t^N  \frac{f_t^N}{\phi_t^N} d\nu_*\\
&=N^2  \int {\mc L}_N^* f_t^N  [\log f_t^N -\log \phi_t^N] d\nu_* - \int \partial_t \phi_t^N  \frac{f_t^N}{\phi_t^N} d\nu_*\\
&= N^2 \int f_t^N {\mc L}_N [ \log \tfrac{f_t^N}{\phi_t^N}] d\nu_* - \int \partial_t \phi_t^N  \frac{f_t^N}{\phi_t^N} d\nu_*\\
&= N^2 \int \tfrac{f_t^N}{\phi_t^N} {\mc L}_N [ \log \tfrac{f_t^N}{\phi_t^N}] \phi_t^N d\nu_* - \int \partial_t \phi_t^N  \frac{f_t^N}{\phi_t^N} d\nu_*\\
& \le N^2 \int {\mc L}_N [ \tfrac{f_t^N}{\phi_t^N}] \phi_t^N d\nu_* - \int \partial_t \phi_t^N  \frac{f_t^N}{\phi_t^N} d\nu_*\\
&=N^2 \int  \cfrac{f_t^N}{\phi_t^N} \, {\mc L}_N^*\phi_t^N d\nu_* - \int \partial_t \phi_t^N  \frac{f_t^N}{\phi_t^N} d\nu_*
\end{split}
\end{equation*}
where we used that for any positive function $h$, $h {\mc L}_N (\log h) \le {\mc L}_N h$ (this is a consequence of Jensen's inequality). 
\cqfd\end{proof}

We define $\xi_x:=({\mc E}_x,r_x)$ and $\pi(t,q):=({\mf e}(t,q),{\mf r}(t,q)).$ If $f$ is a vectorial function, we denote its differential by $Df$. 

\begin{proposition}[\cite{Sim}]
\label{prop:M1} 
Let $(\lambda, \beta)$ be defined by $\beta= ({\mf e}- \tfrac{{\mf r}^2}{2})^{-1}$ and $\lambda= -\beta {\mf r}$. The term $ (\phi_t^N)^{-1}\left(N^2\mc L_N^*\phi_t^N-\partial_t\phi_t^N\right)$ can be expanded as
\begin{multline} 
(\phi_t^N)^{-1}\left(N^2\mc L_N^*\phi_t^N-\partial_t\phi_t^N\right)  \\
=\sum_{k=1}^5 \sum_{x \in \T_N} v_k\left(t,\frac{x}{N}\right) \left[J_x^k-H_k\left({\pi}\left(t,\frac{x}{N}\right)\right)-(DH_k)\left({\pi}\left(t,\frac{x}{N}\right)\right)\cdot \left({\xi}_x-{\pi}\left(t,\frac{x}{N}\right)\right)\right] \\ +o(N)
\label{tay} 
\end{multline} 
where 
\begin{equation*} 
\begin{array}{| c | c | c | c |}
\hline k & J_x^k & H_k(\mf{e},\mf{r}) & v_k(t,q) \\
\hline \hline  1 &  p_x^2+r_xr_{x-1}+2\gamma p_x r_{x-1} &  \mf{e}+{\mf r}^2/2 &  -(2\gamma)^{-1} \partial^2_q \beta(t,q) \\ 
2 & r_x+\gamma p_x & {\mf r} &  -{\gamma}^{-1} \partial^2_q\lambda(t,q) \\
3 & p_x^2\ (r_x+r_{x-1})^2 &  (2{\mf e}-{\mf r}^2)  \left({\mf e}+3{\mf r}^2/2\right) & (4\gamma)^{-1} [\partial_q \beta(t,q)]^2 \\
4 & p_x^2 \ (r_x+r_{x-1}) & {\mf r} \ (2{\mf e}-{\mf r}^2)&  {\gamma}^{-1} \partial_q\beta(t,q) \ \partial_q\lambda(t,q) \\
5 & p_x^2 &  {\mf e}-{\mf r}^2/2 &  {\gamma}^{-1} [\partial_q \lambda(t,q)]^2 \\ 
\hline
\end{array} 
\end{equation*}
\end{proposition}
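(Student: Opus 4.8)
\medskip
\noindent\emph{Sketch of the argument.}
The content of (\ref{tay}) is that a completely explicit computation of $(\phi_t^N)^{-1}(N^2\mc L_N^*\phi_t^N-\partial_t\phi_t^N)$, carried out to order $o(N)$, reorganizes into the displayed sum; the plan is simply to perform that computation. Write $\phi_t^N=\exp\big(\sum_{x\in\TT_N}\ell_x(t)+\tfrac1N\sum_{x\in\TT_N}F(t,\tfrac xN)\,(\theta_x g)(\br,\bp)-\log\bar Z_{t,N}\big)$, where, relative to the reference measure $\nu_*=\nu_{1,0}$, the local--Gibbs exponent is $\ell_x(t)=-\beta(t,\tfrac xN)\big(\mc E_x-\tau_t(\tfrac xN)r_x\big)+\mc E_x$, the quantity $\bar Z_{t,N}$ is a deterministic normalization, and $F,g$ are the functions entering (\ref{eq:firstoderGibbs}). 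Two elementary observations reduce everything to bookkeeping. First, $\mc A_N$ is a derivation, so $\mc A_N\phi_t^N/\phi_t^N=\mc A_N\log\phi_t^N$ and one only ever differentiates the exponent. Second, the velocity flip $\bp\mapsto\bp^x$ fixes every $\mc E_y$ and every $r_y$, hence acts only through the $O(1/N)$ correction term; therefore $\mc S_N\phi_t^N/\phi_t^N=\tfrac12\sum_x(e^{\Delta_x}-1)$ with $\Delta_x=\tfrac1N\sum_j F(t,\tfrac jN)\big[(\theta_j g)(\bp^x)-(\theta_j g)(\bp)\big]=O(1/N)$, which I would expand as $\Delta_x+\tfrac12\Delta_x^2+O(N^{-3})$. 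After subtracting $\partial_t\log\phi_t^N$ one is left with finitely many explicit families of terms, organized by their order in $N$.

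The core of the argument is showing that all contributions of order larger than $N$ cancel. Applying $\mc A_N$ to $\sum_x\ell_x(t)$ and using the microscopic conservation laws $\mc A_N\mc E_x=-\nabla j^e_{x-1,x}$ and $\mc A_N r_x=p_x-p_{x-1}$, a discrete summation by parts transfers the discrete gradient onto the slowly varying coefficients $\beta(t,\cdot/N)$, $\lambda(t,\cdot/N)$, each difference producing $\tfrac1N\partial_q+\tfrac1{2N^2}\partial_q^2+O(N^{-3})$ by Taylor expansion. The term of naive size $N^2$ is $\sim N\sum_x\partial_q\beta(t,\tfrac xN)\,j^e_{x-1,x}$; here I would substitute the exact fluctuation--dissipation identity (\ref{eq:fluct-diss-eq-harm}), $j^e_{x-1,x}=\varphi_x-\varphi_{x-1}+\mc L h_{x-1}$, with $\varphi_x=-\tfrac1{2\gamma}(r_xr_{x+1}+p_x^2)$ and $h_x=\tfrac1{2\gamma}r_{x+1}(p_x+p_{x+1})-\tfrac14 r_{x+1}^2$. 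A further summation by parts turns the gradient part into a discrete Laplacian $\tfrac1N\partial_q^2\beta(t,\tfrac xN)\,\varphi_x$, now of the correct order $N$, while the remaining $\sum_x(N\partial_q\beta)\,\mc L h_{x-1}$ is cancelled, up to $o(N)$, against $\mc L_N^*$ acting on the correction term $\tfrac1N\sum_x F(t,\tfrac xN)(\theta_x g)$ (one matches the $\mc A_N$ and $\mc S_N$ contributions separately, keeping track of the parity swap $R\mc L R=\mc L_N^*$ relating the two generators); it is exactly this cancellation that forces $g$ to be built from $h$ and $F$ from $\partial_q\beta,\partial_q\lambda$. The parallel but simpler computation with $\lambda$ and the deformation current $p_x-p_{x-1}$, which is already a gradient and needs no $h$--correction, produces the $k=2$ family.

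Once the spurious terms are gone, what survives after multiplying by $N^2$ is a sum over $x\in\TT_N$ of $O(1)$ quantities: the discrete--Laplacian contributions (supplying $J_x^1$ and $J_x^2$ with coefficients proportional to $\partial_q^2\beta$ and $\partial_q^2\lambda$), and the quadratic--in--gradient contributions arising both from the second-order term in the expansion of $\exp(\sum_x\ell_x)$ and from the $\tfrac12\Delta_x^2$ term in the flip part (supplying $J_x^3,J_x^4,J_x^5$ with coefficients proportional to $(\partial_q\beta)^2$, $\partial_q\beta\,\partial_q\lambda$, $(\partial_q\lambda)^2$); rewriting the resulting monomials in $(\br,\bp)$ in terms of the five currents of the table is routine. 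Finally $\partial_t\log\phi_t^N$ contributes $\sum_x\big[\partial_t\beta(t,\tfrac xN)(\cdots)+\partial_t\lambda(t,\tfrac xN)(\cdots)\big]$; since $(\mf e,\mf r)$ solves (\ref{eq:2:hl-re1}) and $\beta=(\mf e-\mf r^2/2)^{-1}$, $\lambda=-\beta\mf r$ are smooth functions of $\pi=(\mf e,\mf r)$, I would replace $\partial_t\beta=(D\beta)(\pi)\cdot\partial_t\pi$ by second-order spatial derivatives of the profile and observe that these combine with the spatial terms into exactly the centered combination $J_x^k-H_k(\pi(t,\tfrac xN))-(DH_k)(\pi(t,\tfrac xN))\cdot(\xi_x-\pi(t,\tfrac xN))$: here $H_k(\pi)$ is the average of $J_x^k$ under the product Gibbs state whose parameters match $\pi$ (a one-line Gaussian computation in the harmonic case, using the explicit $\mc Z(\beta,\tau)$), and the linear term $(DH_k)\cdot(\xi_x-\pi)$ is forced by the requirement that no uncompensated term linear in $\xi_x-\pi$ remain.

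I expect the main obstacle to be organizational rather than conceptual: many monomials appear at orders $N^2$, $N$ and $1$, and the delicate part is to check scrupulously that everything of order above $N$ cancels — which is precisely where (\ref{eq:fluct-diss-eq-harm}) and the hydrodynamic equations (\ref{eq:2:hl-re1}) get used — and that the $O(N)$-many surviving $O(1)$ terms reassemble, via the current identities, into exactly the five families of the table with the stated coefficients $v_k$. One must in addition control the $o(N)$ remainders uniformly in $N$, which uses the smoothness of $(\mf e,\mf r)$ together with Gaussian moment bounds for $\nu_*$. No individual step is deep, but the precise shape of the correction term (\ref{eq:firstoderGibbs}), and in particular the choice of $g$, is essentially reverse-engineered from the cancellation demanded in the second step.
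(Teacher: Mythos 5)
Your reconstruction matches the strategy the paper indicates for this proposition (which it states without proof, citing \cite{Sim}): expand $\log\phi_t^N$, use the microscopic conservation laws, summation by parts and the exact fluctuation--dissipation identity (\ref{eq:fluct-diss-eq-harm}) to cancel the order-$N^2$ terms against the $O(1/N)$ correction in (\ref{eq:firstoderGibbs}) (modulo the parity swap relating $\mc L_N$ and $\mc L_N^*$, which you correctly flag), and let the hydrodynamic equation (\ref{eq:2:hl-re1}) supply the centering $H_k(\pi)+(DH_k)(\pi)\cdot(\xi_x-\pi)$ with $H_k$ the Gibbs average of $J_x^k$, exactly as the paper's surrounding discussion describes. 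One immaterial slip: since $\mc A_N$ is a derivation and the velocity flip fixes every $\ell_x$, there is no second-order contribution from the expansion of $\exp\big(\sum_x \ell_x\big)$ --- the quadratic-in-gradient families $k=3,4,5$ come entirely from the $\tfrac{1}{2}\Delta_x^2$ term you also identify.
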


Observe that $H_k (e,r)$ is equal to $\int J_x^k d\nu_{\beta,\tau}$ where $\beta,\tau$ are related to $e,r$ by the thermodynamic relations. Thus, the terms appearing in the righthand side of (\ref{tay}) can be seen as  first order ``Taylor expansion". The form of the first order correction in (\ref{eq:firstoderGibbs}) plays a crucial role in order to get such expansions. 

A priori  the first term on the right-hand side of (\ref{tay}) is of order $N$, but we want to take advantage of these microscopic Taylor expansions to show it is in fact of order $o(N)$. 

First, we need to cut-off large energies in order to work  with bounded variables only. To simplify, we assume they are bounded ab initio. 

Let $\ell$ be some integer (dividing $N$). We introduce some averaging over microscopic blocks of size $\ell$ and we will let $\ell \to \infty$ after $N\to \infty$. We decompose $\TT_N$ in a disjoint union of $p=N/\ell$ boxes $\Lambda_\ell (x_j)$ of length $\ell$ centered at $x_j$, $j \in \{ 1, \ldots,p\}$. The microscopic averaged profiles in a box of size $\ell$ around $y \in \TT_N$ are defined by 
\begin{equation*}
{\tilde \xi}_{\ell} (y) = \cfrac{1}{\ell} \sum_{x \in \Lambda_\ell (y)} \xi_x.
\end{equation*}
Similarly we define
$${\tilde J}^k_\ell (y) =\cfrac{1}{\ell} \sum_{x \in \Lambda_\ell (y)} J_x^k.$$
In (\ref{tay}) we rewrite the sum $\sum_{x \in {\TT}_N}$ as $\sum_{j=1}^p \sum_{x \in \Lambda_{\ell} (x_j)}$ and, by using the smoothness of the function $v_k$, $H_k$, it is easy to replace the term 
$$\cfrac{1}{N}\sum_{x \in \T_N} v_k\left(t,\frac{x}{N}\right) \left[J_x^k-H_k\left({\pi}\left(t,\frac{x}{N}\right)\right)-(DH_k)\left({\pi}\left(t,\frac{x}{N}\right)\right)\cdot \left({\xi}_x-{\pi}\left(t,\frac{x}{N}\right)\right)\right] $$
by 
$$\cfrac{1}{p} \sum_{j=1}^p v_k\left(t,\frac{x_j}{N}\right) \left[{\tilde J}^k_\ell (x_j)  \, - \, H_k\left({\pi}\left(t,\frac{x_j }{N}\right)\right)-(DH_k)\left({\pi}\left(t,\frac{x_j}{N}\right)\right)\cdot \left({\tilde \xi}_\ell (x_j) -{\pi}\left(t,\frac{x_j}{N}\right)\right)\right]$$ 
in the limit $N, \ell \to \infty$ with some error term of order $o(1)$.

Then, the strategy consists in proving the following crucial estimate, often called the {\textit{one-block estimate}}: we replace the empirical average current ${\tilde J}^k_\ell (x_j)$ which is averaged over a box centered at $x_j$ by its mean with respect to a Gibbs measure with the parameters corresponding to the microscopic averaged profiles ${\tilde \xi}_\ell (x_j)$, i.e. $H_k ({\tilde \xi}_\ell (x_j) )$. This non-trivial step is achieved thanks to some compactness argument and the macro-ergodicity of the dynamics.  

Consequently we have to deal with terms in the form
\begin{equation}
\label{eq:phild}
\begin{split}
\cfrac{1}{p} \sum_{j=1}^p v_k\left(t,\frac{x_j}{N}\right) & \left[ H_k \left( {\tilde \xi}_\ell (x_j) \right)  \, - \, H_k\left({\pi}\left(t,\frac{x_j }{N}\right)\right) \right.\\
& \left. -(DH_k)\left({\pi}\left(t,\frac{x_j}{N}\right)\right)\cdot \left({\tilde \xi}_\ell (x_j) -{\pi}\left(t,\frac{x_j}{N}\right)\right)\right]
\end{split}
\end{equation}

The final step consists then in applying the entropy inequality (\ref{eq:ent-inequ}) with respect to ${\hat \mu}_t^N$ with $\phi:=\phi_{\ell, N}$ given by (\ref{eq:phild}) and $\alpha=\delta N$, $\delta>0$ fixed but small. This will produce some term of order ${\hat H}_N (t) /N$ plus the term
\begin{equation*}
\limsup_{\ell \to \infty} \limsup_{N \to \infty}  \cfrac{1}{\delta N} \log \left( \int e^{\delta N \phi} d{\hat \mu}_t^N \right) =I(\delta).
\end{equation*} 
By using some large deviations estimates (observe that ${\hat \mu}_t^N$ is explicit and product at first order in $N$) one can show that $I(\delta)$ is nonpositive for $\delta$ sufficiently small. Thus we get the desired Gronwall inequality. 

There is some additional difficulty that we hid under the carpet in the sketch of the proof. Since the state space is non compact, a control of high energies is required for the initial cut-off. This is a highly non trivial problem {\footnote{A similar problem appears in \cite{OVY} where the authors derived Euler equations for a gas perturbed by some ergodic noise. There, to overcome this difficulty, the authors replace ab initio the kinetic energy by the relativistic kinetic energy.}}. In the harmonic case considered here this control is obtained thanks to the following remark: the set of mixtures of Gaussian probability measures {\footnote{A Gibbs local equilibrium state is a Gaussian state in the harmonic case.}} is preserved by the (harmonic) velocity-flip model. Since for Gaussian measures all the moments are expressed in terms of the covariance matrix, required bounds can be obtained by a suitable control of the covariance matrices appearing in the mixture. 

The extension of this result in the anharmonic case is a challenging open problem (see however \cite{Olla-Sasada} where equilibrium fluctuations are considered for an anharmonic chain perturbed by a conservative noise acting on the momenta and positions).

\subsubsection{Fourier's law}
Since in the harmonic case an exact fluctuation-dissipation equation is available Fourier's law can be obtained without too much work {\footnote{The {\textit{a posteriori}} simple but fundamental remark that an exact fluctuation-dissipation equation exists for the harmonic model (see (\ref{eq:fluct-diss-eq-harm})) is the real contribution of \cite{BO1}. }}. 

 \begin{theorem}[\cite{BO1,BO2}]
 \label{thm:BO1}
Consider the one-dimensional harmonic chain in contact with two heat baths and with forced boundary conditions as in Section \ref{subsec:lrGK}. Then Fourier's law holds:
\begin{equation}
\label{eq:2:flwertyui}
{\tilde J}_s:=\lim_{N \to \infty} N \langle j_{0,1}^e \rangle_{ss} = \cfrac{1}{2\gamma} \left\{ (T_\ell -T_r) +(\tau_\ell^2 -\tau_r^2)\right\}
\end{equation}
and we have
\begin{equation}
\label{eq:jthermh}
\begin{split}
{\hat J}_\ell=\lim_{N \to \infty} N(\langle p_1^2 \rangle_{ss} -T_\ell) =\cfrac{1}{2\gamma \gamma_{\ell}} \left[ (T_r -T_\ell) +(\tau_{\ell} -\tau_r)^2 \right],\\
{\hat J}_r=\lim_{N \to \infty}  N(T_r -\langle p_N^2 \rangle_{ss} )= \cfrac{1}{2\gamma \gamma_r} \left[ (T_r -T_\ell) - (\tau_{\ell} -\tau_{r})^2 \right].
\end{split}
\end{equation}
\end{theorem}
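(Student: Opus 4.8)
The plan is to reduce the statement to the computation of one number, the limiting stationary energy current, and to extract that number from the exact fluctuation--dissipation equation (\ref{eq:fluct-diss-eq-harm}). First, by stationarity and the local conservation law $\mathcal{L}_N \mathcal{E}_x = -\nabla j^e_{x-1,x}$, the expectation $\langle j^e_{x,x+1}\rangle_{ss}$ is independent of $x$ (including $x=0$ and $x=N$); denote it by $J_s$. It then suffices to compute $\tilde J_s=\lim_N N J_s$: from the identities (\ref{eq:JV}), $J_s=-\tau_\ell P_s+\gamma_\ell(T_\ell-\langle p_1^2\rangle_{ss})$ and $J_s=-\tau_r P_s-\gamma_r(T_r-\langle p_N^2\rangle_{ss})$, together with the exact value of $P_s$ from Lemma \ref{lem:velo} (so that $N P_s\to(\tau_r-\tau_\ell)/\gamma$), one gets $N(\langle p_1^2\rangle_{ss}-T_\ell)=-\gamma_\ell^{-1}(N J_s+\tau_\ell N P_s)$ and similarly at the right end, so that the formulas (\ref{eq:jthermh}) for $\hat J_\ell$ and $\hat J_r$ follow at once once $\tilde J_s$ is known.

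Second, in the harmonic case the exact microscopic fluctuation--dissipation identity $j^e_{x,x+1}=\nabla\varphi_x+\mathcal{L}_N h_x$ of (\ref{eq:fluct-diss-eq-harm}), with $\varphi_x=-\tfrac{1}{2\gamma}(r_x r_{x+1}+p_x^2)$, holds on every bulk bond, say for $2\le x\le N-2$, where $h_x$ and the generator terms acting on it coincide with their infinite-volume expressions. Taking the stationary expectation and using $\langle\mathcal{L}_N h_x\rangle_{ss}=0$ gives $J_s=\langle\varphi_{x+1}\rangle_{ss}-\langle\varphi_x\rangle_{ss}$ for each such $x$; hence $x\mapsto\langle r_x r_{x+1}+p_x^2\rangle_{ss}$ is affine along the bulk with increment $-2\gamma J_s$, and summing the telescoping relation yields
\begin{equation*}
(N-3)\,J_s \;=\; \langle\varphi_{N-1}\rangle_{ss}-\langle\varphi_2\rangle_{ss} \;=\; -\tfrac{1}{2\gamma}\Big(\langle r_{N-1}r_N+p_{N-1}^2\rangle_{ss}-\langle r_2 r_3+p_2^2\rangle_{ss}\Big).
\end{equation*}
Consequently $\tilde J_s=\lim_N\big(\langle\varphi_{N-1}\rangle_{ss}-\langle\varphi_2\rangle_{ss}\big)$, and the whole problem is reduced to identifying the limiting boundary values of $\langle\varphi_x\rangle_{ss}$.

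Third, I would argue that, as $N\to\infty$, the restriction of $\mu_{ss}$ to a fixed neighbourhood of the left (resp. right) boundary converges to the Gibbs measure $\nu_{\beta_\ell,\tau_\ell}$ (resp. $\nu_{\beta_r,\tau_r}$): since $J_s\to 0$, far from the other reservoir the chain only feels the Langevin bath at temperature $T_\ell$ and the constant force $\tau_\ell$, which together leave $\nu_{\beta_\ell,\tau_\ell}$ invariant. As $\nu_{\beta,\tau}$ is a product measure with $\langle p_x^2\rangle=T$ and $\langle r_x r_{x+1}\rangle=\tau^2$, this gives $\langle\varphi_2\rangle_{ss}\to-\tfrac{1}{2\gamma}(\tau_\ell^2+T_\ell)$ and $\langle\varphi_{N-1}\rangle_{ss}\to-\tfrac{1}{2\gamma}(\tau_r^2+T_r)$, whence $\tilde J_s=\tfrac{1}{2\gamma}\{(T_\ell-T_r)+(\tau_\ell^2-\tau_r^2)\}$, and then (\ref{eq:jthermh}) via the reduction of the first paragraph. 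Concretely, this convergence can be made quantitative without any soft input: because the harmonic Liouville operator, the velocity-flip noise and the Langevin baths all map quadratic functions to quadratic functions, the stationary second moments solve a \emph{closed} linear system; writing $\langle\mathcal{L}_N q\rangle_{ss}=0$ for the finitely many quadratics $q$ supported near the boundaries and combining these with the affine bulk profile, the exact tension profile $\langle r_x\rangle_{ss}=\tau_x$ and the exact $P_s$, one is left with a linear system of size independent of $N$, whose resolution yields the stated boundary values.

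The main obstacle is precisely this control of the boundary layer, i.e.\ pinning down $\langle r_2 r_3\rangle_{ss}$, $\langle p_2^2\rangle_{ss}$ and their right-end analogues in the limit $N\to\infty$: one must show that the near-boundary correlations relax to their local-equilibrium values, equivalently that no long-range correlation between the two ends survives in the thermodynamic limit. Here the dissipativity of the velocity-flip noise (the spectral-gap property of Lemma \ref{lem:sg}) together with the nearest-neighbour structure of the interaction is what makes the two boundary problems decouple, and one also needs a priori finiteness of the stationary moments, which follows from the Lyapunov function already used for existence of $\mu_{ss}$. Everything else --- the reduction via stationarity, the exploitation of (\ref{eq:fluct-diss-eq-harm}), and the passage to $\hat J_\ell$, $\hat J_r$ --- is bookkeeping once the exact fluctuation--dissipation equation is in hand, which is exactly why Fourier's law is cheap to obtain in the harmonic case.
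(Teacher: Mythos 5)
Your proposal follows essentially the same route as the paper: you use that $\langle j^e_{x,x+1}\rangle_{ss}$ is independent of $x$, telescope the exact fluctuation--dissipation relation (\ref{eq:fluct-diss-eq-harm}) to reduce $N\langle j^e_{0,1}\rangle_{ss}$ to the boundary values of $\langle p_x^2+r_xr_{x+1}\rangle_{ss}$, identify those via local equilibrium at the two ends, and recover $\hat J_\ell,\hat J_r$ from (\ref{eq:JV}) and Lemma \ref{lem:velo} --- exactly the paper's two-step argument. The only (minor) divergence is in the one step both treatments leave sketchy, namely propagating local equilibrium to the near-boundary correlations: the paper invokes an ``entropy production bound'', while you propose exploiting the closed evolution of second moments for the harmonic dynamics; both are legitimate, and your quantitative route is consistent with what is actually done in \cite{BO1,BO2}.
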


\begin{proof}
We divide the proof in two steps:
\begin{itemize}
\item We first prove that there exists a constant $C$ independent of $N$ such that $|\langle j_{0,1}^e \rangle_{ss} | \le C/N$. This is obtained by using the fluctuation-dissipation equation and the fact that $\langle j^e_{x,x+1}\rangle_{ss}$ is independent of $x$:
\begin{eqnarray}
\langle j^e_{0,1} \rangle_{ss} &=& \cfrac{1}{N-3} \sum_{x=2}^{N-2} \langle j^e_{x,x+1} \rangle_{ss}\\ \nonumber
&=& -\cfrac{1}{2\gamma} \cfrac{1}{N-3} \sum_{x=2}^{N-2} \left\langle\nabla \left[ p_{x}^2 + r_{x} r_{x+1}\right]\right\rangle_{ss}\\ \nonumber
&=& \cfrac{1}{2\gamma}{\frac{1}{N-3}} \left\{ (\langle p_2^2 \rangle_{ss} +\langle r_2 r_3\rangle_{ss} )-(\langle p_{N-1}^2 \rangle_{ss} +\langle r_{N-1} r_{N}\rangle_{ss} )\right\}.
\end{eqnarray}
By using simple computations, one can show that $(\langle p_2^2 \rangle_{ss} +\langle r_2 r_3\rangle_{ss} )-(\langle p_{N-1}^2 \rangle_{ss} +\langle r_{N-1} r_{N}\rangle_{ss} )$ is uniformly bounded in $N$ by a positive constant.
\item Now we have only to evaluate the limit of each term appearing in $(\langle p_2^2 \rangle_{ss} +\langle r_2 r_3\rangle_{ss} )-(\langle p_{N-1}^2 \rangle_{ss} +\langle r_{N-1} r_{N}\rangle_{ss} )$. Notice that assuming local equilibrium we easily get  the result. The first step implies that $\langle j_{0,1}^e \rangle_{ss}$ and $ \langle j_{N,N+1}^{e} \rangle_{ss}$ vanish as $N \to +\infty$. Since $V_s$ goes to $0$ by Lemma \ref{lem:velo}, one has that $\langle p_1^2 \rangle_{ss}$ and $\langle p_{N}^2 \rangle_{ss}$ converge respectively to $T_\ell$ and $T_r$. By using some ``entropy production bound'' one can propagate this local equilibrium information to the particles close to the boundaries and show (\ref{eq:2:flwertyui}).
\end{itemize}
\cqfd\end{proof}

It follows from this Theorem that the system can be used as a heater but not as a refrigerator. Assume for example that $T_r > T_{\ell}$.  The term ${\hat J}_\ell$ (resp. ${\hat J}_r$) is the macroscopic heat current from the left reservoir to the system (resp. from the system to the right reservoir). Whatever the values of $\tau_\ell, \tau_r$ are, ${\hat J}_{\ell} >0$ and we can not realize a refrigerator.  But if $(T_r -T_{\ell}) < (\tau_r -\tau_{\ell})^2$ then ${\hat J}_r <0$ and we realized a heater.\\

The proof of the validity of Fourier's law for anharmonic chains perturbed by an energy conserving noise is still open.

\subsubsection[Macroscopic Fluctuation Theory] {Macroscopic Fluctuation Theory for the energy conserving harmonic chain}

The macroscopic fluctuation theory (\cite{BdSGJLL1}) is a general approach developed by Bertini, De Sole, Gabrielli, Jona-Lasinio and Landim to calculate the large deviation functional of the empirical profiles of the conserved quantities of Markov processes in a NESS.  Its main interest is that it can be applied to a large class of boundary driven diffusive systems and does not require the explicit form of the NESS but only the knowledge of two thermodynamic macroscopic parameters of the system, the diffusion coefficient $D(\rho)$ and the mobility $\chi(\rho)$. This theory can be seen as an infinite dimensional generalization of the Freidlin-Wentzel theory \cite{Freidlin-Wentzel} and is based on the large deviation principle for the hydrodynamics of the system.

In  order to explain (roughly) the theory we consider for simplicity a Markovian system $\{ \eta (t) :=\{ \eta_x (t) \in \RR \, ; \, x \in \{1, \ldots N\} \}_{t \ge 0}$ with only one conserved quantity, say the density $\rho$, in contact with two reservoirs at each extremity. Here $N$ is the size of the system which will be sent to infinity. We denote by $\mu^N_{ss}$ the nonequilibrium stationary state of $\{\eta(t)\}_{t \ge 0}$. For any microscopic configuration $\eta:=\{\eta_x \, ;\, x \in \{1, \ldots,N\} \}$ let
\begin{equation*}
\pi^N (\eta, \cdot)=\sum_{x=1}^{N-1} \eta_x {\bf 1}_{\big[\tfrac{x}{N}, \tfrac{x+1}{N}\big)} (\cdot)
\end{equation*}
be the empirical density profile. In the diffusive time scale, we assume that $\pi^N (\eta (tN^2), \cdot)$ converges as $N$ goes to infinity to $\rho_t (\cdot):=\rho(t,\cdot)$ solution of
\begin{equation*}
\begin{cases}
\partial_t \rho = \partial_y (D(\rho) \partial_y \rho), \quad y \in [0,1], \quad t\ge 0,\\
\rho(t,0) =\rho_\ell, \quad \rho(t,1)=\rho_r,\quad  t \ge 0, \\
\rho(0, \cdot) =\rho_0 (\cdot) 
\end{cases}
\end{equation*}
where $\rho_0 (\cdot)$ is the initial density profile, $D(\rho)>0$ is the diffusion coefficient and $\rho_\ell$, $\rho_r$ the densities fixed by the reservoirs. As $t \to \infty$ the solution $\rho_t$ of the hydrodynamic equation converges to a stationary profile $\bar \rho :[0,1] \to \RR$ solution of $D(\bar \rho) \partial_y {\bar \rho} =J=const.$ with ${\bar \rho}(0)=\rho_\ell$, ${\bar \rho} (1)=\rho_r$. We assume that under $\mu_{ss}^N$, the empirical density profile $\pi^N (\eta, \cdot)$ converges to ${\bar \rho}$. This assumption is nothing but a law of large numbers for the random variables $\pi^N$. 

We are here interested in the corresponding large deviation principle. Thus, we want to estimate the probability that in the NESS $\mu_{ss}^N$ the empirical density profile $\pi^N$ is close to an atypical macroscopic profile $\rho(\cdot) \neq {\bar \rho}$. This probability typically is of order $e^{-N {\bb V} (\rho)}$ where $\bb V$ is the rate function:
\begin{equation*}
\mu_{ss}^N ( \pi^N (\eta, \cdot) \approx \rho(\cdot) ) \approx e^{-N {\bb V} (\rho)}.
\end{equation*}
The goal of the macroscopic fluctuation theory is to obtain information about this functional.

The condition to be fulfilled by the system to apply the theory of Bertini et al. is that it satisfies a {\textit{dynamical large deviation principle}} with a rate function which takes a quadratic form {\footnote{Such property has been proved to be valid for a large class of stochastic dynamics (\cite{KOV89}, \cite{KL}).}} like (\ref{eq:formI}). 

Let us first explain what we mean by dynamical large deviation principle. Imagine we start the system from a Gibbs local equilibrium state corresponding to the macroscopic profile $\rho_0$. We want to estimate the probability that the empirical density  $\pi^N (\eta (tN^2), \cdot)$ is close during the macroscopic time interval $[0,T]$, $T$ fixed, to a smooth macroscopic profile $\gamma (t, y)$ supposed to satisfy \footnote{This assumption avoids taking into account the cost to produce the initial profile, cost which is irrelevant for us.} $\gamma(0,\cdot)=\rho_0$. This probability is exponentially small in $N$ with a rate $I_{[0,T]} ( \gamma\, | \, {\rho_0})$
\begin{equation}
\label{eq:IIII}
{\PP} \left[ \pi^N ( \eta (tN^2), y) \approx \gamma (t,y), \; (t,y) \in [0,T] \times [0,1] \right] \sim e^{- N I_{[0,T]} (\gamma | \rho_0)}. 
\end{equation}
The rate function is assumed to be of the form 
\begin{equation}
\label{eq:formI}
\begin{split}
I_{[0,T]} (\gamma \, | \, \rho_0) & = \frac{1}{2} \int_0^T dt \int_0^1 dy \;  \chi (\rho (t,y))  \left[ (\partial_y H) (t,y) \right]^2 
\end{split}
\end{equation}
where $ \partial_y H$ is the extra gradient external field needed to produce the fluctuation $\gamma$, namely such that
\begin{equation}
\label{eq:Hgamma}
\partial_t \gamma = \partial_y \left [ D(\gamma) \partial_y \gamma \, - \, \chi(\gamma) \partial_y H \right].
\end{equation} 
Thus, $I_{[0,T]} (\gamma | \rho_0)$ is the work done by the external field $\partial_y H$ to produce the fluctuation $\gamma$ in the time interval $[0,T]$. The function $\chi$ appearing in (\ref{eq:Hgamma}) is the second thermodynamic parameter (with the diffusion coefficient $D$) mentioned in the beginning of this section. The two parameters $D$ and $\chi$ are in fact related together by the Einstein relation so that knowing one of them and the Gibbs states of the microscopic model is sufficient to obtain the second. 

To show this result the strategy is the following. We perturb the Markov process $\{\eta (t)\}_{t \ge 0}$ thanks to the function $H:=H(\gamma)$, which is solution of the Poisson equation (\ref{eq:Hgamma}), by adding locally a small space inhomogeneous drift provided by $\partial_y H$. In doing so we obtain a new Markov process $\{ \eta^H (t)\}_{t \ge 0}$ such that in the diffusive time scale $\pi^N (\eta^H (tN^2), \cdot)$ converges to $\gamma (\cdot)$. Let ${\bb P}^H$ (resp. $\PP^0$) be the probability measure on the empirical density paths space induced by $\{\eta^H (tN^2)\}_{t \in[0,T]}$ (resp. $\{\eta(tN^2)\}_{t \in [0,T]}$) . Then, by using hydrodynamic limits techniques similar to the ones explained in Section \ref{subsec:hl-vf} we show that in the large $N$ limit, under ${{\mathbb P}^H}$, the Radon-Nikodym derivative is well approximated by {\footnote{We use Girsanov transform to express the Radon-Nikodym derivative. A priori it is not a functional of the empirical density and we need to establish some {\textit{replacement lemma}} (see \cite{KL}). }}
\begin{equation*}
\cfrac{{\rm d}\PP^0}{{\rm d}{{\PP^H}}} (\pi ) \approx \exp\left\{ -N I_{[0,T]}( \pi | \rho_0)\right\}. 
\end{equation*} 
Here $\pi:=\{\pi (t,y)\, ; \, t \in[0,T], y \in [0,1]\}$ is any space-time density profile. Thus, since
\begin{equation*}
\PP^0 \left[\pi^N (\eta (tN^2), \cdot) \sim \gamma(t, \cdot),\; t\in [0,T]\right] 
={\EE^H} \left[ \cfrac{{\rm d}\PP^0}{{\rm d}{{\PP}^H}} (\pi) \,  {\bf 1}_{\left\{ \pi (t, \cdot) \sim \gamma (t, \cdot), t \in [0,T] \right\} }\right]
\end{equation*}
we obtain (\ref{eq:IIII}).

The macroscopic fluctuation theory claims that the large deviations functional ${\bb V} (\rho)$ of the empirical density in the NESS coincides with the quasi-potential ${\bb W} (\rho)$ defined by
\begin{equation*}
{\bb W} (\rho) = \inf_{\substack{\gamma: \gamma (-\infty) = {\bar \rho}\\ \gamma (0) =\rho}} I_{[-\infty, 0]} (\gamma | {\bar \rho}).
\end{equation*} 
Here $I_{[-\infty, 0]}$ is obtained from $I_{[0,T]}$ by a shift in time by $-T$, $T$ being sent to $+\infty$ afterwards. In words, the quasi potential determines the cost to produce a fluctuation equal to $\gamma$ at $t=0$ when the system is macroscopically in the stationary profile ${\bar \rho}$ at $t=-\infty$. 

Thus, the problem is reduced to computing $W$. It can be shown that $W$ solves (at least formally) the infinite-dimensional Hamilton-Jacobi equation
\begin{equation}
\frac{1}{2} \left \langle \partial_y \left[  \frac{\delta {\bb W}}{ \delta \rho} \right] \, , \chi( \rho) \,  \partial_y \left[  \frac{\delta {\bb W}}{ \delta \rho} \right] \right\rangle + \left\langle \frac{\delta {\bb W}}{ \delta \rho}\, , \, \partial_y \left[  D(\rho) \partial_y \rho \right] \right\rangle =0
\end{equation}
where $\langle \cdot, \cdot \rangle$ denotes the usual scalar product in ${\mathbb L}^2 ([0,1])$. Note that there is no uniqueness of solutions (${\bb W}=0$ is a solution) and up to now a general theory of infinite dimensional Hamilton-Jacobi equations is still missing. This implies that we have in fact to solve by hand the variational problem and the solution is only known for few systems. This is an important limitation of the macroscopic fluctuation theory. Even getting interesting qualitative properties on ${\bb W}$ is difficult. 

The rigorous implementation of this long program has only been carried for the boundary driven Symmetric Simple Exclusion Process and extended with less rigor to a few other systems (see \cite{BDSGLL-SSEP}, \cite{Bod-Giac}, \cite{Farfan} for rigorous results).   

Let us now try to apply this theory for the harmonic chain with velocity-flip noise. Since we have a fully explicit microscopic fluctuation-dissipation equation (even when some harmonic pinning is added) we can easily guess what is the form of the hydrodynamic equations under various boundary conditions by assuming that the propagation of local equilibrium in the diffusive time scale holds. Nevertheless, let us observe that a rigorous derivation is missing, the obstacle being a sufficiently good control of the high energies {\footnote{This control is only available in the case of periodic boundary conditions (\cite{Sim}).}}. The boundary conditions we impose to the system are the following. At the left (resp. right) end we put the chain in contact with a Langevin bath at temperature $T_\ell$ (resp. $T_r$) and consider the system with fixed boundary conditions or with forced boundary conditions with the same force $\tau$ at the two boundaries.   
Then, for the unpinned chain, the equations (\ref{eq:2:hl-re1}) are still valid but they are supplemented with the boundary conditions (\cite{BKLL1})
\begin{equation}
\left[{\mf e} -\cfrac{{\mf r}^2}{2} \right] (t,0)= T_\ell, \quad \left[{\mf e} -\cfrac{{\mf r}^2}{2} \right] (t,1)= T_r, 
\end{equation}
since the Langevin baths fix the temperatures at the boundaries and
\begin{equation}
\partial_y {\mf r} (t, 0) = \partial_y {\mf r} (t, 1)=0 
\end{equation} 
for fixed boundary conditions (the total length of the chain is constant {\footnote{Indeed, by (\ref{eq:2:hl-re1}), we have $\partial_t (\int_{0}^1 {\mf r} (t,y) dy) = \gamma^{-1} \int_{0}^1 {\partial}_y^2 {\mf r} (t,y) dy= \gamma^{-1} [  \partial_y {\mf r} (t, 1) -\partial_y {\mf r} (t,0) ] =0$.}}) and 
\begin{equation}
{\mf r} (t, 0) = {\mf r} (t, 1)= \tau
\end{equation} 
for forced boundary conditions. 

If the chain is pinned by the harmonic potential $W(q)=\nu q^{2} /2$ then only the energy is conserved and the macroscopic diffusion equation takes the form
\begin{equation}
\begin{cases}
\partial_t {\mf e} = \partial_{y} (\kappa \partial_y {\mf e}),\\
{\mf e}(0,y) = {\mf e}_0 (y),\\
{\mf e} (t,0)= T_\ell, \; {\mf e} (t,1)=T_r,
\end{cases}
 \quad y \in (0,1)
\end{equation}
where the conductivity $\kappa$ is equal to  (\cite{BKLL1})
\begin{equation}
\label{eq:conductivityuiop}
\kappa= \cfrac{1/\gamma}{2 + \nu^2 + \sqrt{\nu (\nu +4)}}.
\end{equation}

Assuming a good control of high energies, it is possible to derive the dynamical large deviations function of the empirical conserved quantities. The goal would be to compute the large deviation functional of the NESS which according to the macroscopic fluctuation theory coincides with the quasi potential. We recall that the quasi potential is defined by a variational problem and that it depends only on two thermodynamic quantities, the diffusion coefficient and the mobility (the latter are matrices if several conserved quantities are involved). 

Let us first consider the pinned velocity flip model where the energy is the only conserved quantity. It turns out that the mobility is a quadratic function. Consequently, the methods exposed in Theorem 6.5 of \cite{BdSGJLL2} apply and the variational formula can be computed. The quasi potential ${\bb V}(\cdot)$ is given by (\cite{BKLL1})
 \begin{equation}
 \label{eq:2:VKMP}
{\bb V} (e)=\int_0^1 dq \left[ \cfrac{e(q)}{F(q)} -1 -\log\left( \cfrac{e(q)}{F(q)}\right) - \log \left( \cfrac{F' (q)}{T_r -T_\ell }\right) \right]\, ,
\end{equation}
where $F$ is the unique non decreasing solution of
\begin{equation}
\begin{cases}
{F''} = \cfrac{F-e}{F^2} (F')^2\, ,\\
F(0)=T_\ell , \; F(1) =T_r\, .
\end{cases}
\end{equation}
Surprisingly, the function ${\bb V}$ is independent of the pinning value $\nu$ and of the intensity of the noise $\gamma$. It is thus natural to conjecture that in the NESS of the unpinned velocity flip model the large deviation function of the empirical energy profile coincides with ${\bb V}$ but we did not succeed to prove it. Observe that at equilibrium ($T_\ell=T_r$), $F(q)=T_\ell=T_r$ and the last term in (\ref{eq:2:VKMP}) disappears so that the quasi potential is local. On the other hand, if $T_\ell \ne T_r$, this is no longer the case and this reflects the presence of long-range correlations in the NESS. In particular, an approximation of the NESS by a Gibbs local equilibrium state in the form (\ref{eq:GLES-ness}) would not give the correct value of the quasi potential.   

For the unpinned chain we have two conserved quantities. Solving the variational problem of the quasi potential for these two conserved quantities is a very difficult open problem{\footnote{Here we do not have any exactly solvable model like the Symmetric Simple Exclusion Process which could give us some hints for the form of the quasi-potential.}} (see \cite{B2} for a partial result for some other stochastic perturbation of the harmonic chain).

%
%

\section{Anomalous diffusion}
\label{ch:anomalous}

An anomalous large conductivity is observed experimentally in carbon nanotubes and numerically in chains of oscillators without pinning, where numerical evidence shows a conductivity diverging with the size $N$ of the system like $N^{\alpha}$, with $\alpha<1$ in dimension $d=1$, and like $\log N$ in dimension $d=2$. If some nonlinearity is present in the interaction, finite conductivity is observed numerically in all pinned case or in dimension $d \ge 3$ (\cite{Dhar},\cite{LLP1}). Consequently it has been suggested that conservation of momentum is an important ingredient for the anomalous conductivity in low dimensions (see however \cite{ZZWZ}). 

In Chapter \ref{ch:normal} we considered chains of oscillators perturbed by a noise conserving only energy and destroying the possible momentum conservation. In the harmonic case we obtained Fourier's law and in the anharmonic case we proved existence of the Green-Kubo formula for the thermal conductivity. 

In this chapter the added perturbation conserves both energy and momentum (energy and volume for the Hamiltonian systems considered in Section \ref{sec:intro-phd}). These systems qualitatively have the same behavior as Hamiltonian chains of oscillators (without any noise), i.e. anomalous transport for unpinned chain in dimension $d=1,2$ and normal transport otherwise. We could even be more optimistic and hope that they share with the deterministic systems common limits for the energy fluctuation fields, two point correlation functions ${\ldots}$ This is because one expects that the microscopic details of the dynamics are irrelevant. Therefore some {\textit{universality}} should hold. Recently H. Spohn (\cite{Sp-NFH}), by following ideas of \cite{VB}, used the nonlinear fluctuating hydrodynamics theory  to classify very precisely the different expected universality classes. The nonlinear fluctuating hydrodynamics theory is based on the assumption that the microscopic dynamics evolve in the Euler time scale according to a system of conservation laws. The theory is macroscopic in the sense that all the predictions are done starting from this system of conservation laws without further references to the microscopic dynamics. Since we have seen that the presence of the energy-momentum conserving noise does not change the form of the hydrodynamic equations, the theory claims in fact that the limit of the fluctuations fields of the conserved quantities for purely deterministic chains of oscillators and for noisy energy-momentum conserving chains are exactly the same.

\subsection{Harmonic chains with momentum exchange noise}

Getting some information on the behavior of the energy fluctuation field in the large scale limit remains challenging. So far, satisfactory but not complete results have only been obtained in the harmonic case. The anharmonic case is much more difficult. 

In \cite{BBO1}, \cite{BBO2} we explicitly compute  the time correlation current for a system of harmonic oscillators perturbed by an energy-momentum conserving noise  {\footnote{It is straightforward to adapt the proofs given in \cite{BBO1}, \cite{BBO2} to the case of the momenta exchange noise.}} and we find that it behaves, for large times, like $t^{-d/2}$ in the unpinned cases, and like $t^{-d/2-1}$ when on-site harmonic potential is present.   

These results are given in the Green-Kubo formalism. Their counterpart in the NESS formalism have been considered in \cite{LMMP} but a rigorous proof is still missing. Several variations of the  Green-Kubo formula can be found in the literature: one can start with the infinite system in the canonical ensemble, as we did in Subsection~\ref{subsec:GK0}, or with a finite system, in the canonical or micro-canonical ensembles, sending the size of the system to infinity. It is widely believed that all these definitions coincide (also in the case of infinite conductivity).  As shown in \cite{BBO2}, this is essentially true for the energy-momentum conserving harmonic chain. Here we consider the simplest possible definition avoiding to discuss the rigorous definition of the canonical ensemble in infinite volume and the problem of equivalence of ensembles. 

The set-up is the following. We consider a chain perturbed by the energy-momentum conserving noise (see (\ref{eq:cons-noise-exch-mom})) with periodic boundary conditions. Its Hamiltonian is given by ${\mc H}_N=\sum_{x \in \TT_N^d}  {\mc E}_x$ where the energy ${\mc E}_x$ of atom $x$ is
\begin{equation}
\label{eq:4-energy-site-x}
{\mc E}_x= \frac{|p_x|^2}2 + W(q_x) + \frac{1}{2} \sum_{\substack{|y-x|=1}} V(q_x - q_y). 
\end{equation}

The system is considered at equilibrium under the Gibbs grand-canonical measure
\begin{equation*}
d\mu_{N,T} =\frac{e^{-{\mc H}_N /T}}{Z_{N,T}} d\bq d \bp
\end{equation*}
where $Z_{N,T}$ is the renormalization constant.

The Green-Kubo formula for the thermal conductivity in the direction $e_k$, $1 \le k \le d$, is {\footnote{By symmetry arguments this is independent of $k$.}} the limiting variance of the energy current $J^{e,\gamma}_{x,x+e_k} ([0,t])$ up to time $t$ in the direction $e_k$ in a space-time box of size $N \times t$:
\begin{equation}
\label{eq:5;GKtyu}
\kappa (T) =  \frac{1}{2T^2}  \lim_{t \to +\infty} \lim_{N \to \infty}{\mathbb E}_{\mu_{N,T}} \left[ \left(\frac{1}{\sqrt{N^{d} t}}\sum_{x \in \TT_N^d} J^{e,\gamma}_{x,x+e_1} ([0,t]) \right)^2 \right].
\end{equation}
The energy currents $\{ J^{e,\gamma}_{x,x+e_k} ([0,t])\, ; \, k=1, \ldots, d\}$ are defined by the energy conservation law
\begin{equation*}
{\mc E}_x (t) -{\mc E}_x (0) = \sum_{k=1}^d \left( J^{e,\gamma}_{x-e_k,x} ([0,t]) -J^{e, \gamma}_{x,x+e_k} ([0,t]) \right).
\end{equation*}
The energy current up to time $t$ can be written as
\begin{equation}
J^{e, \gamma}_{x,x+e_k} ([0,t]) = \int_0^t {j}_{x,x+e_k}^{e,\gamma} (s) ds + M_{x,x+e_k} (t)
\end{equation}
where $M_{x,x+e_k} (t)$ is a martingale and ${j}^{e,\gamma}_{x,x+e_k}$ is the instantaneous current which has the form
\begin{equation}
\label{eq:curebtjean}
{j}_{x,x+e_k}^{e,\gamma} ={\tilde j}^e_{x,x+e_k}+ \gamma \left[ p_{x+e_k}^2 -p_x^2 \right], \quad {\tilde j}^e_{x,x+e_k}= -\frac{1}{2} V' (q_{x+e_k} -q_x) (p_{x+e_k} + p_x).  
\end{equation}
The term ${\tilde j}_{x,x+e_k}^e$ is the Hamiltonian contribution while the gradient term is due to the noise. 

We now expand the square in (\ref{eq:5;GKtyu}). Notice first that since we have periodic boundary conditions the gradient term appearing in (\ref{eq:curebtjean}) does not contribute. By a time reversal argument one can show that the cross term between the martingale and the time integral of the instantaneous current vanishes. Moreover a simple computation shows that the square of the martingale term gives a contribution equal to $\gamma$ (see \cite{BBO2} for details). Thus we obtain
\begin{equation}
\label{eq:5;GKtyu6}
\begin{split}
\kappa (T) &= T^{-2}  \lim_{t \to +\infty} \lim_{N \to \infty} \frac{1}{2N^{d} t}{\mathbb E}_{\mu_{N,T}} \left[ \left( \sum_{x \in \TT_N^d} \int_{0}^t  {\tilde j}_{x,x+e_k}^e (s) ds \right)^2 \right] \; + \;  \gamma \\
&=T^{-2}  \lim_{t \to +\infty} \lim_{N \to \infty} \sum_{x \in \TT_N^d} \int_0^{+\infty} ds \left(1- \frac{s}{t} \right)^{+}  {\mathbb E}_{\mu_{N,T}} \left[ {\tilde j}^e_{0,e_k} (0) \,{\tilde  j}_{x,x+e_k}^e (s) \right] \, ds \; + \: \gamma
\end{split}
\end{equation}
where the last line is obtained by time and space stationarity of the Gibbs measure and $u^+$ denotes $\max (u,0)${\footnote{Observe that replacing $(1-\tfrac{s}{t})^+$ by $e^{-s/t}$ and $\lim_{N \to \infty} \sum_{x \in \TT_N^d}$ by $\sum_{x \in \ZZ^d}$ we formally get an expression similar to the Green-Kubo formula of Theorem \ref{th:GK}.}}. It is then clear that the divergence of the Green-Kubo formula, {\textit{i.e.}} anomalous transport, is due to a slow decay of the time correlation function $C(t)$ defined by 
\begin{equation}
\label{eq:4:CTTT}
C(t)= \lim_{N \to \infty} \sum_{x \in \TT_N^d} {\mathbb E}_{\mu_{N,T}} \left[ {\tilde j}^e_{0,e_k} (0) \, {\tilde j}_{x,x+e_k}^e (t) \right].
\end{equation} 

\begin{theorem}[\cite{BBO2}]
\label{th:GK-harm-exch}
Consider the harmonic case: $V (r )=\alpha r^2$, $W(q)= \nu q^2$ where $\alpha>0$ and $\nu \ge 0$. \\
Then the limit defining $C(t)$ in (\ref{eq:4:CTTT}) exists and can be computed explicitly. In particular, we have that $C(t) \sim t^{-d/2}$ if $\nu=0$ and $C(t) \sim t^{-d/2 -1}$ if $\nu>0$. \\
Consequently, the limit (\ref{eq:5;GKtyu6}) exists in $(0,+\infty]$ and is finite if and only if $d \ge 3$ or $\nu>0$. When finite, $\kappa (T)$ is independent of $T$ and can be computed explicitly.
\end{theorem}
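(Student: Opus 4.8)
\emph{Strategy.} The plan is to exploit that, in the harmonic case, the generator $\mc L={\mc A}+\gamma{\mc S}$ acts \emph{linearly} on the fields, so that $C(t)$ collapses to an explicit, Fourier--diagonalised, finite--dimensional computation, from which the power laws follow by a Laplace analysis near the zero mode. Since $V(r)=\alpha r^2$ and $W(q)=\nu q^2$ the generator preserves the degree of polynomial observables; in particular ${\mc L}\psi=M\psi$ on the field $\psi=(\bq,\bp)$, where $M$ is the translation--invariant operator whose only nonzero blocks are $\dot\bq=\bp$ and $\dot\bp=-\Phi\bq+\gamma\,\Delta\bp$, with $\Phi$ the harmonic force matrix and $\Delta$ the discrete Laplacian produced by the momentum--exchange noise (acting only on the $\bp$--sector). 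Hence the matrix of equilibrium correlations $\Gamma_{\alpha\beta}(x,t)={\mathbb E}_{\mu_{N,T}}[\psi_\alpha(0,0)\psi_\beta(x,t)]$ solves the closed \emph{linear} equation $\partial_t\Gamma=M\Gamma$, with initial datum the explicit static Gaussian two--point function. Because $\mu_{N,T}$ is centred Gaussian and ${\tilde j}^e_{x,x+e_k}$ is quadratic, a Wick expansion turns $C(t)$ into a \emph{bilinear} functional of $\Gamma(\cdot,t)$ --- all higher cumulants drop out. Letting $N\to\infty$ and Fourier transforming in $x$, each mode $k\in\T^d$ evolves under a $2\times2$ ODE $\partial_t\widehat\Gamma(k,t)=\widehat M(k)\widehat\Gamma(k,t)$, where $\widehat M(k)$ has first row $(0,1)$ and second row $(-\omega(k)^2,\,-c\gamma\,\ell(k))$, with $\ell(k)=\sum_{i=1}^d(1-\cos k_i)\asymp|k|^2$, $\omega(k)^2=2\nu+4\alpha\,\ell(k)$, and $c>0$ an explicit constant.

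Next I would solve this ODE explicitly. The eigenvalues of $\widehat M(k)$ are $\mu_\pm(k)=-\tfrac{c\gamma}{2}\ell(k)\pm i\varpi(k)$ with $\varpi(k)^2=\omega(k)^2-\tfrac{c^2\gamma^2}{4}\ell(k)^2$, so $e^{t\widehat M(k)}$ --- hence $\widehat\Gamma(k,t)$ and the integrand of $C(t)$ --- is a combination of the three time factors $e^{(\mu_++\mu_-)t}=e^{-c\gamma\ell(k)t}$ (non--oscillatory) and $e^{2\mu_\pm(k)t}=e^{(-c\gamma\ell(k)\pm 2i\varpi(k))t}$ (oscillatory). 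Substituting into the Wick expansion,
\[
C(t)=\int_{\T^d}\Big[\,a_0(k)\,e^{-c\gamma\ell(k)t}+\mathrm{Re}\big(a_1(k)\,e^{(2i\varpi(k)-c\gamma\ell(k))t}\big)\,\Big]\,dk ,
\]
with $a_0,a_1$ fully explicit (they involve the static correlators, the coefficients of ${\tilde j}^e$, and the eigenvectors of $\widehat M(k)$). Two structural facts govern the small--$k$ behaviour of $a_0$: the static position correlator contributes the factor $\widehat G_{qq}(k,0)=T/\omega(k)^2$, and the discrete--gradient structure of ${\tilde j}^e_{x,x+e_k}=-\alpha(q_{x+e_k}-q_x)(p_{x+e_k}+p_x)$ contributes a factor $\asymp\ell(k)$. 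Hence $a_0(k)\asymp\ell(k)/\omega(k)^2$ near $k=0$, up to bounded continuous factors: when $\nu=0$ this tends to a \emph{strictly positive} constant (the infrared enhancement of $\widehat G_{qq}$ exactly cancels the gradient factor, since $\omega(k)^2\asymp\ell(k)$), whereas when $\nu>0$ one has $\omega(k)^2\to 2\nu>0$ and $a_0(k)=O(|k|^2)$.

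Then the asymptotics. The oscillatory term is controlled by non--stationary phase together with the damping $e^{-c\gamma\ell(k)t}$: at the finitely many van Hove critical points of $\varpi$ (where $\ell(k)$ is bounded away from $0$) the damping is exponential, while near $k=0$ one uses that the cone/optical phase beats $t^{-d/2}$; the van Hove singularities of the density of states are integrable. In all regimes this term decays strictly faster than the non--oscillatory one. For the latter a Laplace estimate on $\{\ell(k)\lesssim 1/t\}$ gives $\int a_0(k)e^{-c\gamma\ell(k)t}\,dk\sim C_\star\,t^{-d/2}$ if $\nu=0$ and $\sim C_\star\,t^{-d/2-1}$ if $\nu>0$, with $C_\star$ explicit --- this yields both the existence and explicit form of $C(t)$ and the two decay rates. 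Finally, from (\ref{eq:5;GKtyu6}), $\kappa(T)=\gamma+T^{-2}\lim_{t\to\infty}\int_0^t(1-\tfrac{s}{t})\,C(s)\,ds=\gamma+T^{-2}\int_0^\infty C(s)\,ds$ whenever this regularised integral converges, i.e.\ iff $d/2>1$ --- equivalently $d\ge3$ --- when $\nu=0$, and for every $d\ge1$ when $\nu>0$; in the remaining cases ($\nu=0$, $d\in\{1,2\}$) the integral diverges and $\kappa(T)=+\infty$. When finite, $C(t)$ is homogeneous of degree $2$ in $T$ by the Gaussian scaling of $\mu_{N,T}$ (static correlators $\propto T$, ${\tilde j}^e$ quadratic), so $T^{-2}C(t)$ --- hence $\kappa(T)$ --- is $T$--independent, and it is explicit because every step above is.

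The algebraic part (the linear closure and its Fourier diagonalisation) is routine. The genuine difficulty is the last step: rigorously interchanging $\lim_{N\to\infty}$ with the $k$--integral and with the time integral, producing uniform--in--$t$ integrable majorants near the van Hove points and near $k=0$ (where $\widehat G_{qq}(k,0)$ is singular when $\nu=0$), and extracting the sharp constants out of the oscillatory cancellation --- this delicate Fourier/stationary--phase analysis is where the bulk of the work of \cite{BBO2} lies.
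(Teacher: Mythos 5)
Your overall architecture (linear closure in the harmonic case, Fourier diagonalisation, small-$k$ Laplace asymptotics distinguishing $\nu=0$ from $\nu>0$ through the factor $\ell(k)/\omega(k)^2$, and the $T$-scaling argument at the end) points at the right mechanism and reproduces the correct exponents. But there is a genuine gap at the central step: the Wick expansion of $C(t)$ as a bilinear functional of the field two-point function $\Gamma(\cdot,t)$. With the momentum-exchange noise the map $\psi(0)\mapsto\psi(t)$ is linear only \emph{conditionally} on the realization of the Poisson clocks; after averaging over the noise, the joint law of $(\psi(0),\psi(t))$ is a mixture of Gaussians, not a Gaussian, so Isserlis' theorem does not express the two-time fourth moments through $\Gamma$ alone. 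Concretely, writing $\psi(t)=R_\omega\psi(0)$ with $R_\omega$ the random linear flow, the object you need is ${\mathbb E}_\omega\big[R_\omega^T Q R_\omega\big]$ --- the action of the \emph{two-particle} semigroup on the quadratic form $Q$ of the current --- whereas your $2\times2$ mode-by-mode ODE for $\widehat\Gamma$ only produces ${\mathbb E}_\omega[R_\omega]^T Q\,{\mathbb E}_\omega[R_\omega]$. These differ: on quadratic observables the exchange noise acts as a two-particle stirring generator, which is not the tensor square of the one-particle discrete Laplacian (the two ``particles'' swap jointly when adjacent). Your argument would be exact for $\gamma=0$, but then the diffusive damping $e^{-c\gamma\ell(k)t}$ --- the very source of the $t^{-d/2}$ tail --- is absent, so the step cannot be dismissed as a harmless approximation.

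The paper closes the equation at the correct level and is in fact simpler than your time-domain/stationary-phase program: since ${\mc L}_N$ preserves translation-invariant polynomials of degree two and $\sum_x\tilde{j}^e_{x,x+e_k}$ is one, it solves the resolvent equation $(z-{\mc L}_N)h_N=\sum_x\tilde{j}^e_{x,x+e_k}$ with the ansatz $h_N=\sum a(y-x)p_xp_y+\sum b(y-x)p_xq_y+\sum c(y-x)q_xq_y$, finds $a=c=0$ with $b$ solving the scalar lattice equation $(z+2\nu-\gamma\Delta)b=-\alpha(\delta_{e_k}-\delta_{-e_k})$, reads off the Laplace transform $L_N(z)$ of $C_N(t)$ from a static Gaussian pairing, and then inverts the Laplace transform; this bypasses the oscillatory/van Hove analysis you flag as the hard part. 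To repair your proof you would have to replace the evolution of $\Gamma$ by the evolution of the three kernels $(a,b,c)$ under the genuine quadratic-observable dynamics --- at which point you have rederived the paper's computation in the time domain.
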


\begin{proof}
We compute the Laplace transform $L_N (z)=\int_0^{+\infty} e^{-zt} C_N (t) dt$, $z>0$, of  $C_N (t)=\sum_{x \in \TT_N^d} {\mathbb E}_{\mu_{N,T}} \left[ {\tilde j}_{0,e_k} (0) \, {\tilde j}_{x,x+e_k}^e (t) \right]$. Since we have
\begin{equation*}
L_N (z) = N^{-1} \mu_{N,T} \left[ \left(\sum_{x \in \TT_N^d} {\tilde j}_{x,x+,e_k}^e \right) \, (z-{\mc L}_N)^{-1} \left( \sum_{x \in \TT_N^d} {\tilde j}_{x,x+,e_k}^e \right) \right]
\end{equation*} 
it is equivalent to solve the resolvent equation $(z-{\mc L}_N) h_N = \sum_{x \in \TT_N^d} {\tilde j}_{x,x+e_k}^e$. Notice that ${\mc L}_N$ maps polynomial functions of degree $2$ into polynomial functions of degree $2$ and that $\sum_{x} {\tilde j}^e_{x,x+e_k}$ is a polynomial function of degree $2$. Thus, the function $h_N$ is a polynomial function of degree $2$. Moreover it has to be space translation invariant since $\sum_{x} {\tilde j}^e_{x,x+e_k}$ is. Therefore we can look for a function $h_N$ of the form
\begin{equation*}
h_N = \sum_{x,y} a (y-x) p_x p_y + \sum_{x,y} b (y-x) p_x q_y + \sum_{x,y} c (y-x) q_x q_y
\end{equation*}
where $a,b$ and $c$ are functions from $\TT_N^d$ into $\RR$. We compute explicitly $a,b$ and $c$ and we get $a=c=0$ while $b$ is the solution to 
$$(z+2\nu -\gamma \Delta) b = - \alpha (\delta_{e_k} -\delta_{-e_k})$$
where $\Delta$ is the discrete Laplacian. Then we deduce $L_N (z)$, hence $C_N (t)$ by inverse Laplace transform. The limit $C(t) = \lim_{N \to + \infty} C_N (t)$ follows.
\cqfd\end{proof}

Consequently in the unpinned harmonic cases in dimension $d = 1$ and $2$, the conductivity
of our model diverges as $N$ goes to infinity. Otherwise it converges as $N \to \infty$. In the anharmonic case we obtained some upper bounds showing that the divergence cannot be worse than in the harmonic case. These upper bounds also show that the conductivity cannot be infinite if $d\ge 3$ (see \cite{BBO2} for details and precise statements).

\subsection{A class of perturbed Hamiltonian systems}

In \cite{BS} is proposed a class of models for which anomalous diffusion is observed. These models have been introduced in Section \ref{sec:intro-phd} of Chapter \ref{ch:models}. The goal of \cite{BS} was to show that these systems have a behavior very similar to that of the standard one-dimensional chains of oscillators conserving momentum {\footnote{They could be defined in any dimension.}}. 

\subsubsection{Definition of thermodynamic variables}
\label{sec:def_thermo}

Let us fix a potential $V$ and consider the stochastic energy-volume conserving model defined by the generator ${\mc L}={\mc A} + \gamma {\mc S}$, $\gamma \ge 0$, where ${\mc A}$ and ${\mc S}$ are given by (\ref{eq:1:ASdynum2}). Recall that the Gibbs grand-canonical probability measures $\mu_{\beta, \lambda}$, $\beta >0$, $\lambda \in \RR$, defined on $\Omega$ by
\begin{equation*}
d\mu_{\beta,\lambda} (\eta) =\prod_{x \in \ZZ} Z(\beta,\lambda)^{-1} \exp \left\{ -\beta V(\eta_x) -\lambda \eta_x  \right\} d\eta_x 
\end{equation*}
form a family of invariant probability measures for the infinite dynamics. We assume that 
the partition function $Z$ is well defined on $(0,+\infty) \times {\mathbb R}$. The following thermodynamic relations relate the chemical potentials $\beta, \lambda$  to the mean volume $v$ and the mean energy $e$ under $\mu_{\beta,\lambda}$:
\begin{equation}
\label{eq:tr}
\begin{split}
&v(\beta,\lambda) =\mu_{\beta,\lambda} (\eta_x)= -\partial_{\lambda} \Big(\log Z(\beta,\lambda)\Big), \\
&e(\beta,\lambda)= \mu_{\beta,\lambda} (V(\eta_x))= -\partial_{\beta}\Big(\log Z(\beta,\lambda)\Big).
\end{split}
\end{equation} 
These relations can be inverted by a Legendre transform to express $\beta$ and~$\lambda$ 
as a function of~$e$ and~$v$. 
Define the thermodynamic entropy $S \, : \, (0,+\infty) \times \RR \to [-\infty,+\infty)$ as
\begin{equation*}
S(e,v)= \inf_{ \lambda \in \RR, \beta >0} \Big\{ \beta e + \lambda v + 
\log Z (\beta,\lambda) \Big\}.
\end{equation*}
Let ${\mc U}$ be the convex domain of $(0,+\infty) \times \RR$ where 
$S(e,v) >- \infty$ and $\mathring{\mc U}$ its interior. 
Then, for any $(e,v):=(e(\beta, \lambda),v(\beta,\lambda)) 
\in {\mathring{\mc U}}$, the parameters $\beta,\lambda$ can be obtained as 
\begin{equation}
\label{eq:14}
\beta = (\partial_e S ) (e,v), 
\qquad 
\lambda= (\partial_v S) (e,v).
\end{equation} 
We also introduce the tension 
$\tau(\beta,\lambda)= \mu_{\beta,\lambda} (V' (\eta_0))=-\lambda / \beta$.
The microscopic energy current $j^{e,\gamma}_{x,x+1}$ and volume current $j^{v,\gamma}_{x,x+1}$ are given by
\begin{equation}
\label{eq:current_gamma}
\begin{split}
&j^{e,\gamma}_{x,x+1}= - V'(\eta_x) V' (\eta_{x+1}) - \gamma \nabla [ V(\eta_x)],\\
&j^{v,\gamma}_{x,x+1}= -[ V' (\eta_x) +V' (\eta_{x+1})] - \gamma \nabla [ \eta_x].
\end{split}
\end{equation} 
With these notations we have 
\begin{equation}
\label{eq:averages_of_currents}
\mu_{\beta,\lambda} (j_{x,x+1}^{e, \gamma})=-\tau^2, 
\qquad 
\mu_{\beta,\lambda} (j_{x,x+1}^{v,\gamma})=-2\tau.
\end{equation}
In the sequel, with a slight abuse of notation, 
we also write $\tau$ for $\tau(\beta(e,v), \lambda(e,v))$ 
where $\beta(e,v)$ and $\lambda(e,v)$ are defined by relations~(\ref{eq:14}).

\subsubsection{Hydrodynamic limits}
\label{sec:hydroLimEuler}
Consider the finite \emph{closed} stochastic energy-volume dynamics with periodic boundary conditions, 
that is the dynamics generated by ${\mc L}_{N,{\rm{per}}} ={\mc A}_{N,{\rm{per}}} + \gamma {\mc S}_{N,{\rm{per}}}$ where
\begin{equation}
\label{eq:A_per}
\Big({\mathcal A}_{N,{\rm per}} f \Big)(\eta) = 
\sum_{x \in \TT_N} \left[V'(\eta_{x+1})-V'(\eta_{x-1}) \right] \partial_{\eta_x}f(\eta),
\end{equation}
and
\begin{equation*}
\Big({\mc S}_{N,{\rm per}} f\Big)(\eta) =\sum_{x \in \TT_N} \left[ f(\eta^{x,x+1}) -f(\eta) \right].
\end{equation*}
We choose to consider the dynamics on $\TT_N$ rather than on $\ZZ$ to avoid (nontrivial) technicalities. We are interested in the macroscopic behavior of the two conserved quantities on a macroscopic time-scale $Nt$ as $N \to \infty$. 

\begin{remark} The results of this section shall be compared to the results of Section \ref{subsec:hl-vf}. For the velocity-flip model, the hydrodynamic limits where trivial in the Euler time scale. It was only in the diffusive time scale that some evolution of the profiles was observed and the hydrodynamic limits were given by parabolic equations (see (\ref{eq:2:hl-re1}). Here, the evolution is not trivial in the Euler time scale and the hydrodynamic limits are given by hyperbolic equations (see below (\ref{eq:syslim}). 
\end{remark}

We assume that the system is initially distributed according to a local Gibbs equilibrium  
state corresponding to a given energy-volume profile ${X_0} : \TT \to {\mathring{\mc U}}$: 
\begin{equation*}
X_0=\left(
\begin{array}{c}
{\mf e}_0\\
{\mf v}_0
\end{array} 
\right), 
\end{equation*}
in the sense that, for a given system size~$N$, 
the initial state of the system is described by the following product probability measure: 
\begin{equation}
\label{eq:Ges}
d\mu_{{\mf e}_0, {\mf v}_0}^N(\eta) = 
\prod_{x \in \TT_N} \frac{ \exp\left\{ -  {\mf \beta}_0 (x/N) V(\eta_x) 
-  {\mf \lambda}_0 (x/N) \eta_x \right\}}{Z ( {\mf \beta}_0 (x/N), {\mf \lambda}_0 (x/N))}\,
d\eta_x,
\end{equation} 
where $(\beta_0 (x/N), \lambda_0 (x/N))$ is actually a function of 
$({\mf e}_0 (x/N), {\mf v}_0 (x/N))$ through relations~(\ref{eq:14}).

Starting from such a state, we expect the state of the system at time $N t$ to be 
close, in a suitable sense, to a local Gibbs equilibrium measure 
corresponding to an energy-volume profile
\begin{equation*}
X(t,\cdot)=\left(
\begin{array}{c}
{\mf e}(t, \cdot)\\
{\mf v} (t,\cdot)
\end{array} 
\right),
\end{equation*}
satisfying a suitable partial differential equation with initial condition~$X_0$
at time $t = 0$. In view of~(\ref{eq:averages_of_currents}), and assuming propagation of local 
equilibrium, it is not difficult
to show that the expected partial differential equation 
is the following system of two conservation laws: 
\begin{equation}
\label{eq:syslim}
\begin{cases}
&\partial_t {\mf e} -\partial_q \tau^2 =0,\\
&\partial_t {\mf v} - 2 \partial_{q} \tau=0,
\end{cases}
\end{equation} 
with initial conditions ${\mf e}(0,\cdot) = {\mf e}_0(\cdot), {\mf v}(0,\cdot) = {\mf v}_0(\cdot)$.
We write~(\ref{eq:syslim}) more compactly as  
\begin{equation*}
\partial_t X + \partial_q {\mf J}(X) =0, \qquad X (0,\cdot)=X_0 (\cdot),
\end{equation*}
with 
\begin{equation}
\label{eq:JJJ}
{\mf J} (X)= \left( 
\begin{array}{c}
-\tau^2 ( {\mf e}, {\mf v})\\
-2\tau ({\mf e}, {\mf v})
\end{array}
\right).
\end{equation}

The system of conservation laws (\ref{eq:syslim}) has other 
nontrivial conservation laws. In particular, the thermodynamic entropy~$S$ 
is conserved along a smooth solution of~(\ref{eq:syslim}):
\begin{equation}
\label{eq:consentropy}
\partial_t S ({\mf e},{\mf v})= 0.
\end{equation}
Since the thermodynamic entropy is a strictly concave function 
on $\mathring{\mc U}$, the system~(\ref{eq:syslim}) is strictly 
hyperbolic on $\mathring{\mc U}$ (see~\cite{Serre}). The two real 
eigenvalues of $(D{\mf J})({\bar \xi})$ are $0$ 
and $-\left[ \partial_{e} (\tau^2) + 2 \partial_v (\tau)\right]$, 
corresponding respectively to the two eigenvectors
\begin{equation}
\left(
\begin{array}{c}
-\partial_v \tau\\
\partial_e \tau
\end{array}
\right), \qquad
\left(
\begin{array}{c}
\tau\\
1
\end{array}
\right).
\end{equation}

It is well known that classical solutions to systems of $n \ge 1$ conservation laws in general develop shocks in finite times, even when starting from smooth initial conditions. If we consider weak solutions rather than classical solutions, then a criterion is needed to select a unique, relevant solution among the weak ones. For scalar conservation laws ($n=1$), this criterion is furnished by the so-called entropy inequality and existence and uniqueness of solutions is fully understood. If $n\ge2$, only partial results exist (see~\cite{Serre}). This motivates the fact that we restrict our analysis to smooth solutions before the appearance of shocks.

We assume that the potential $V$ satisfies the following

\begin{assumption}
\label{ass:V}
The potential $V$ is a smooth, non-negative function such that the partition function $Z(\beta, \lambda) = \int_{-\infty}^{\infty} \exp\left( -\beta V(r) -\lambda r \right)\, dr$ is well defined for $\beta>0$ and $\lambda \in \RR$ and  there exists a positive constant $C$ such that 
\begin{equation}
\label{eq:ass-pot1}
0 < V'' (r) \le C, 
\end{equation}
and
\begin{equation}
\label{eq:ass-pot2}
\limsup_{| r| \to + \infty} \frac{r V' (r)}{V(r)} \in(0, +\infty),
\end{equation}
\begin{equation}
  \label{eq:ass-pot3}
  \limsup_{| r| \to + \infty} \frac{[V' (r)]^2}{V(r)} < +\infty.  
\end{equation}
\end{assumption}

\emph{Provided} we can prove that the infinite volume
dynamics is macro-ergodic, then we can rigorously prove (even if $\gamma=0$),
using the relative entropy method of Yau (\cite{yau1}), that~(\ref{eq:syslim}) is
indeed the hydrodynamic limit in the smooth regime, \textit{i.e.}
for times~$t$ up to the appearance of the first shock (see for
example~\cite{KL,TV}). Observe that the expected hydrodynamic limits do not depend on $\gamma$. We need to assume $\gamma>0$ to ensure the macro-ergodicity of the dynamics.

\begin{remark}
As argued in~\cite{TV}, it turns out that the conservation of thermodynamic entropy (\ref{eq:consentropy}) is fundamental for Yau's method where, in the expansion of the time derivative of relative entropy, the cancelation of the linear terms is a consequence of the preservation of the thermodynamic entropy.
\end{remark}

Averages with respect to the empirical energy-volume measure are defined, for continuous functions $G,H : \TT \to \RR$, as (similarly to (\ref{eq:vf-empiricaldensities}))
\[
\left(\begin{array}{c} 
{\mc E}_N (t,G) \cr 
{\mc V}_N (t,H) 
\end{array} \right) 
= \left( \begin{array}{c}
\displaystyle \frac1N \sum_{x \in \TT_N} G\left(\frac{x}{N}\right) \, V(\eta_x (t) ) \cr
\displaystyle \frac{1}{N} \sum_{x \in \TT_N} H\left(\frac{x}{N}\right) \, \eta_x (t)
\end{array} \right).
\]   
We can then state the following result.

\begin{theorem}[\cite{BS}]
\label{th:4:hl}
Fix some $\gamma > 0$ and consider 
the dynamics on the torus ${\bb T}_N$ generated by ${\mc L}_{N,{\rm per}}$ where the potential $V$ satisfies Assumption~\ref{ass:V}. Assume that the system is initially distributed according to a local Gibbs state~(\ref{eq:Ges}) with smooth energy profile ${\mf e}_0$ and volume profile ${\mf v}_0$. Consider a positive time $t$ such that the solution $({\mf e}, {\mf v})$ to (\ref{eq:syslim}) belongs to~${\mathring{\mc U}}$ and is smooth on the time interval~$[0,t]$. Then, for any continuous test functions $G,H:\TT \to \RR$, the following convergence in
probability holds as $N \to +\infty$:
\[
\Big({\mc E}_N (tN, G), {\mc V}_N (tN,H)\Big) \longrightarrow 
\left(\int_{\TT} G(q) {\mf e} (t,q) dq,  \int_{\TT} H(q) {\mf v} (t,q) dq\right).
\]
\end{theorem}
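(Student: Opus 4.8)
The plan is to run Yau's relative entropy method (\cite{yau1}, \cite{OVY}), as sketched for the velocity-flip model in Section~\ref{subsec:hl-vf}, but now in the \emph{Euler} time scale $tN$ and with the \emph{hyperbolic} system (\ref{eq:syslim}) in the role played there by the parabolic system (\ref{eq:2:hl-re1}). Let $\mu_t^N$ be the law of $\eta(tN)$ for the process generated by $N\mc L_{N,{\rm per}}$ started from the local Gibbs state $\mu^N_{{\mf e}_0,{\mf v}_0}$ of (\ref{eq:Ges}), and let $\nu_t^N:=\mu^N_{{\mf e}(t,\cdot),{\mf v}(t,\cdot)}$ be the local Gibbs state built from the smooth solution $X(t,\cdot)=({\mf e}(t,\cdot),{\mf v}(t,\cdot))$ of (\ref{eq:syslim}), with chemical potential profiles $\beta(t,\cdot),\lambda(t,\cdot)$ obtained from $X(t,\cdot)$ through (\ref{eq:14}). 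Unlike the diffusive non-gradient situation of Section~\ref{subsec:hl-vf}, no first-order correction to the reference state is needed, since at the Euler scale the only relevant object is the local-equilibrium average of the currents, which by (\ref{eq:averages_of_currents}) is exactly $(-\tau^2,-2\tau)$. Fixing a reference Gibbs measure $\mu_*$ and writing $\phi_t^N$ for the (explicit, product) density of $\nu_t^N$ with respect to $\mu_*$, the first step is the entropy production bound, obtained exactly as in Lemma~\ref{entropy}:
\begin{equation*}
\partial_t H_N(t)\;\le\; \int \frac{1}{\phi_t^N}\Big(N\,\mc L_{N,{\rm per}}^*\phi_t^N-\partial_t\phi_t^N\Big)\,d\mu_t^N,\qquad H_N(t):=H(\mu_t^N\,|\,\nu_t^N).
\end{equation*}

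The core of the argument is to show that this right-hand side is at most $C\,H_N(t)+o(N)$. Exploiting the product form of $\nu_t^N$ and computing $N\mc L_{N,{\rm per}}^*\phi_t^N-\partial_t\phi_t^N$ explicitly --- the analogue of Proposition~\ref{prop:M1} --- the integrand becomes $\sum_{x\in\TT_N}$ of terms of the shape (a discrete gradient of a slowly varying profile)$\times$(a microscopic current $j^{\bullet,\gamma}_{x,x+1}$ minus its local-equilibrium mean minus a first-order correction). The decisive point is that the terms linear in the local fluctuation $\xi_x-X(t,x/N)$, with $\xi_x:=(V(\eta_x),\eta_x)$, cancel: this is a consequence both of $X$ solving the Euler equations (\ref{eq:syslim}) and of the preservation of the thermodynamic entropy (\ref{eq:consentropy}) --- precisely the mechanism noted in the Remark following Assumption~\ref{ass:V}, after \cite{TV}. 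What remains is handled in two moves. First, a \emph{one-block estimate}: over mesoscopic boxes of size $\ell$ one replaces the block-averaged currents by $-\tau^2$ and $-2\tau$ evaluated at the block-averaged fields ${\tilde \xi}_\ell$; this step --- and only this step --- uses $\gamma>0$, through the macro-ergodicity of the dynamics. Second, the residual terms, now of the form (profile gradient)$\times$(second-order remainder of a Taylor expansion of $(-\tau^2,-2\tau)$ around $X(t,x/N)$ evaluated at ${\tilde \xi}_\ell$), are dominated via the entropy inequality (\ref{eq:ent-inequ}) relative to $\nu_t^N$ with parameter $\delta N$; since $\nu_t^N$ is a product measure, its large-deviation functional is explicit, and one extracts a contribution $\le C H_N(t)$ plus an $o(N)$ error, with $C$ depending on the Lipschitz norms of the profiles (finite in the smooth regime). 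This yields $\partial_t H_N(t)\le C H_N(t)+o(N)$; since $H_N(0)=0$ (the two initial laws coincide because $X(0,\cdot)=X_0$), Gronwall gives $H_N(t)=o(N)$ for every $t$ up to the first shock.

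Once $H_N(t)=o(N)$ is known, the claimed convergence in probability of $\big({\mc E}_N(tN,G),{\mc V}_N(tN,H)\big)$ towards $\big(\int_{\TT}G\,{\mf e}(t,\cdot),\int_{\TT}H\,{\mf v}(t,\cdot)\big)$ follows by the standard entropy-inequality argument: apply (\ref{eq:ent-inequ}) with $\alpha=\delta N$ and $\phi=\big|{\mc E}_N(tN,G)-\int_{\TT} G\,{\mf e}(t,\cdot)\big|$ (and likewise for $\mc V_N$), use $H_N(t)/(\delta N)=o(1)$, and bound $\tfrac{1}{\delta N}\log\int e^{\delta N\phi}\,d\nu_t^N$ by an exponential Chebyshev / large-deviation estimate under the product measure $\nu_t^N$, which tends to $0$ as $\delta\to0$ --- exactly as in the discussion following Theorem~\ref{thm:Sim1}.

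I expect the genuine obstacle to be the \emph{control of large energies}, forced upon us by the non-compactness of $\Omega$: one needs, uniformly in $N$ and in $t$ on the smooth interval, that the $\mu_t^N$-moments of $V(\eta_x)$ and of $\eta_x$ are ${\mc O}(1)$ per site, so as to legitimately cut off large energies before the one-block estimate and to control the replacement errors. This is exactly what the growth conditions (\ref{eq:ass-pot1})--(\ref{eq:ass-pot3}) in Assumption~\ref{ass:V} are designed for: (\ref{eq:ass-pot1}) makes $\log Z$ smooth with Gaussian-type Gibbs tails, while (\ref{eq:ass-pot2})--(\ref{eq:ass-pot3}) ensure that the currents $V'(\eta_x)V'(\eta_{x+1})$ and $V'(\eta_x)+V'(\eta_{x+1})$ are controlled by the energy $V(\eta_x)+V(\eta_{x+1})$, which is what propagates the moment bounds along the dynamics. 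A secondary, structural limitation is that the whole scheme is confined to the pre-shock regime: smoothness of $X(t,\cdot)$ is used both to Taylor-expand the profiles and to guarantee strict hyperbolicity of (\ref{eq:syslim}) (via strict concavity of $S$ on $\mathring{\mc U}$), so the admissible time horizon is dictated by the first blow-up of the classical solution.
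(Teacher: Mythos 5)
Your proposal follows exactly the route the paper takes (and in fact spells it out in more detail than the paper, which only sketches the argument by appeal to Yau's relative entropy method and the references \cite{yau1,KL,TV}): the entropy production bound, the cancellation of linear terms via conservation of the thermodynamic entropy (\ref{eq:consentropy}) as in \cite{TV}, the one-block estimate as the sole place where macro-ergodicity (hence $\gamma>0$) enters, the Gronwall/entropy-inequality conclusion, and the role of Assumption~\ref{ass:V} in controlling large energies. I see no gap; this is essentially the paper's own proof.
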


The derivation of the hydrodynamic limits beyond the shocks for systems of $n \geq 2$ conservation laws is very difficult and is one of the most challenging problems in the field of hydrodynamic limits. The first difficulty is of course our poor understanding of the solutions to such systems. Recently, J. Fritz proposed in~\cite{F0} to derive hydrodynamic limits for hyperbolic systems (in the case $n=2$) by some extension of the compensated-compactness approach~\cite{Tartar} to stochastic microscopic models. This program has been achieved in~\cite{FT} (see also the recent paper \cite{MR2807137}), where the authors derive the classical $n=2$ Leroux system of conservation laws. In fact, to be exact, only the convergence to the set of entropy solutions is proved, the question of uniqueness being left open. It nonetheless remains the best result available at this time. The proof is based on a strict control of entropy pairs at the microscopic level by the use of logarithmic Sobolev inequality estimates. It would be very interesting to extend these methods to systems such as the ones considered here.

\subsubsection{Anomalous diffusion}

We investigate now the problem of anomalous diffusion of energy for these models.

If $V ( r )=r^2$ then Theorem \ref{th:GK-harm-exch} is mutatis mutandis valid and we get the same conclusions: the time-space correlations for the current behave for large time $t$ like $t^{-1/2}$ . Thus the system is super-diffusive (see \cite{BS} for the details).

For generic anharmonic potentials, we can only provide numerical evidence of the super-diffusivity.
However, it is difficult to estimate numerically the time autocorrelation functions of the 
currents because of their expected long-time tails, and because statistical errors are
very large (in relative value) when $t$ is large. 
Also, for finite systems (the only ones we can simulate on a computer), 
the autocorrelation is generically exponentially decreasing for anharmonic potentials,
and, to obtain meaningful results, the thermodynamic limit $N \to \infty$ should be taken before 
the long-time limit.

A more tenable approach consists in studying a nonequilibrium system in its steady-state.
We consider a finite system of length $2N+1$ in contact with two 
thermostats which fix the value of the energy at the boundaries. 
The generator of the dynamics is given by
\begin{equation}
  \label{eq:generator_open}
        {\mc L}_{N} = {\mc A}_{N} + \gamma {\mc S}_N + \lambda_\ell {\mc B}_{-N,T_\ell} + 
        \lambda_{r}  {\mc B}_{N,T_r},
\end{equation}
where ${\mc A}_N$ and ${\mc S}_N$ are defined  by 
\begin{equation*}
\begin{split}
&({\mathcal A}_{N}f)(\eta) =\sum_{x =-(N-1)}^{N-1} 
\Big(V'(\eta_{x+1})-V'(\eta_{x-1})\Big)(\partial_{\eta_x} f)(\eta) \\
&{\phantom{({\mathcal A}_{N}f)(\eta) =}}-V' (\eta_{N -1}) \, (\partial_{\eta_N} f)(\eta) + V' (\eta_{-N+1}) \, (\partial_{\eta_{-N}} f)(\eta),\\
&({\mc S}_N f)(\eta) =\sum_{x=-N}^{N-1} \left[ f(\eta^{x,x+1}) -f(\eta) \right],
\end{split}
\end{equation*}
and
${\mc B}_{x,T} = T \partial_{\eta_x}^2 -V' (\eta_x) \partial_{\eta_x}$. 
The positive parameters $\lambda_\ell$ and $\lambda_r$ are the intensities of the thermostats and $T_\ell, T_r$ the ``temperatures'' of the thermostats. 

The generator ${\mc B}_{x,T}$ is a thermostatting mechanism. In order to fix the energy at site $-N$ (resp. $N$) to the value $e_\ell$ (resp. $e_r$), we have to choose $\beta_\ell= T_{\ell}^{-1}$ (resp. $\beta_r = T_r^{-1}$) such that $e(\beta_\ell,0)=e_\ell$ (resp. $e(\beta_r, 0)=e_r$). We denote by $\langle \cdot \rangle_{\rm ss}$ the unique stationary state for the dynamics generated by ${\mc L}_{N}$.

The energy currents $j^{e,\gamma}_{x,x+1}$, which are such that
${\mc L}_{N,{\rm open}} (V (\eta_x)) = -\nabla j_{x-1,x}^{e,\gamma}$ (for $x=-N, \ldots,N-1$), 
are given by the first line of (\ref{eq:current_gamma})
for $x = -N+1,\dots,N-1$ while
\begin{equation*}
\begin{split}
&j^{e,\gamma}_{-N-1,-N} = \lambda_\ell \left[ T_\ell V'' (\eta_{-N}) -(V' (\eta_{-N}))^2 \right],\\
&j^{e,\gamma}_{N,N+1}   = -\lambda_r \left[ T_r V'' (\eta_{N}) -(V' (\eta_{N}))^2 \right].
\end{split}
\end{equation*}
Since $\langle {\mc L}_{N,{\rm open}} (V (\eta_x)) \rangle_{\rm ss} = 0$,
it follows that, for any $x = -N,\ldots,N+1$, 
$\langle j_{x,x+1}^{e,\gamma} \rangle_{\rm ss}$ is equal to a 
constant $J_N^\gamma(T_\ell,T_r)$ independent of $x$. In fact,
\begin{equation}
\label{eq:total_current_NESS}
J^{\gamma}_N(T_\ell,T_r) = \left\langle \mathcal{J}_N^\gamma 
\right\rangle_{\rm ss}, 
\qquad
\mathcal{J}_N^\gamma = \frac{1}{2N}\sum_{x=-N-1}^{N} j_{x,x+1}^{e,\gamma}.
\end{equation} 
The latter equation is interesting from a numerical viewpoint since it allows to 
perform some spatial averaging, hence reducing the statistical error of the results. We estimate the 
exponent $\delta \ge 0$ such that 
\begin{equation}
  \label{eq:def_delta}
  \kappa (N) :=N J_N^{\gamma} \sim N^{\delta}
\end{equation}
using numerical simulations.  If $\delta=0$, the system is a normal conductor of energy.
If on the other hand $\delta > 0$, it is a superconductor. 

The numerical simulations giving the value of $\delta$ are summarized in Table 1. 
They have been performed for the harmonic chain $V ( r ) =r^2 /2$, the quartic potential  $V ( r ) =r^2/2 + r^4/4$ and the exponential potential $V ( r ) =  e^{-r}+r-1$. In Section~\ref{sec:exp.case} we will motivate our interest in the exponential potential.
\begin{table}
\captionsetup{justification=centering}
\label{tabtabexponents} 
\begin{center}
\begin{tabular}{|c|c|c|c|}
\hline
$\gamma$ & $V ( r ) =r^2 /2$   & $V ( r ) =r^2/2 + r^4/4$  & $V ( r ) =  e^{-r}+r-1$ \\
\hline
$0$               & $1$     & $0.13$ & $1$    \\
$0.01$\phantom{0} & --    & 0.14 & 0.12 \\
$0.1$\phantom{00} & 0.50  & 0.27 & 0.25 \\
$1$\phantom{0.00} & 0.50  & 0.43 & 0.33 \\ 
\hline
\end{tabular}
\end{center}

\caption{Conductivity exponents}
\end{table}

Exponents in the harmonic case agree with their expected values. For nonlinear potentials, except for the singular value $\delta = 1$ when $\gamma = 0$ and $V ( r ) = e^{-r}+r-1$, the exponents seem to be monotonically increasing with $\gamma$. A similar behavior of the exponents is observed for Toda chains~\cite{ILOS} with a momentum conserving noise. This strange behavior casts some doubts on the convergence of conductivity exponents $\delta$ with respect to system size $N$ (see the comment after Theorem 3 in \cite{BG}). A detailed study, including the nonlinear fluctuating hydrodynamics predictions, is available in \cite{Spohn-Stoltz}. 

Note also that the value found for $\gamma = 0$ with the anharmonic FPU potential $V(r) = r^2/2 + r^4/4$ is smaller than the corresponding value for standard oscillators chains, which is around 0.33 (see~\cite{MDN07}). We performed also numerical simulations for a ``rotor'' model, $V ( r ) = 1- \cos ( r )$, and we found $\delta \approx 0.02$, i.e. a normal conductivity. A similar picture is observed for the usual rotor {\footnote{The variable $r$ has  to be interpreted as an angle and belongs to the torus $2 \pi \TT$.}} model which is composed of a chain of unpinned oscillators with interaction potential $V ( r )=1-\cos ( r )$. The normal behavior is conjectured to be due to the absence of long waves (that carry energy ballistically) because some rotors turning fast in between will break them (\cite{ILOS-11}). See \cite{Spohn2014-rotators} and references therein for a recent study of the rotors model. 

%

\subsection[Harmonic interactions]{Fractional superdiffusion for a harmonic chain with bulk noise}

In this section we consider the energy-volume conserving model with quadratic potential. Fix $\lambda \in \bb R$ and $\beta>0$, and consider the process $\{\eta (t) ; t \geq 0\}$ generated by (\ref{eq:1:ASdynum2}) with $V(\eta)= \eta^2 /2$ and with initial distribution $\mu_{\beta, \lambda}$. Notice that the distribution of the process $\{\eta (t) +\rho; t \geq 0\}$ with initial measure $\mu_{\beta, \rho+\lambda}$ is the same for all values of $\lambda \in \RR$. Therefore, we can assume, without loss of generality, that $\lambda = 0$. We write $\mu_\beta= \mu_{\beta,0}$ to simplify notation, and denote by $\bb P$ the law of $\{\eta (t) ; t \geq 0\}$ and by $\bb E$ the expectation with respect to $\bb P$. The {\em energy correlation function} $\{S_t (x); x \in \bb Z, t \geq 0\}$ is defined as
\begin{equation}
S_t(x) = \tfrac{\beta^2}{2} \;   \bb E\big[\big(\eta_0(0)^2-\tfrac{1}{\beta} \big) \big( \eta_x (t)^2 -\tfrac{1}{\beta} \big)\big]
\end{equation}
for any $x \in \bb Z$ and any $t \geq 0$. The constant $\frac{\beta^2}{2}$ is just the inverse of the variance of $\eta_x^2-\frac{1}{\beta}$ under $\mu_\beta$. By translation invariance of the dynamics and the initial distribution $\mu_\beta$, we see that
\begin{equation}
\tfrac{\beta^2}{2}\,  \bb E\big[ \big(\eta_x(0)^2 -\tfrac{1}{\beta} \big) \big(\eta_y (t)^2 -\tfrac{1}{\beta}\big)\big]=S_t(y-x)
\end{equation}
for any $x, y \in \bb Z$.

\begin{theorem} [\cite{BGJ}]
\label{t1}
Let $f,g: \bb R \to \bb R$ be smooth functions of compact support. Then,
\begin{equation}
\label{ec1.12}
\lim_{n \to \infty} \tfrac{1}{n}\sum_{x, y  \in \bb Z} f\big( \tfrac{x}{n} \big) g\big( \tfrac{y}{n}\big) S_{tn^{3/2}}(x-y) = \iint f(x)g(y) P_t(x-y) dx dy,
\end{equation}
where $\{P_t(x); x \in \bb R, t \geq 0\}$ is the fundamental solution {\footnote{Since the skew fractional heat equation is linear, it can be solved explicitly by Fourier transform.}} of the skew fractional heat equation on $\RR$
\begin{equation}
\label{ec1.13}
\partial_t u =  -\tfrac{1}{\sqrt{2}}\big\{ (-\Delta)^{3/4} - \nabla (-\Delta)^{1/4}\big\} u.
\end{equation}
\end{theorem}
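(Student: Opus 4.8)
The plan is to reduce the statement to an \emph{exactly solvable linear} problem, exploiting heavily that $V(\eta)=\eta^2/2$. \emph{Step 1 (closure of the correlations).} Since $V'(\eta)=\eta$, the deterministic part of the dynamics is linear, and together with the exchanges it preserves $\sum_x\eta_x^2$; hence, conditionally on the Poisson clocks of the noise, $\eta(t)=A(t)\eta(0)$ for a random \emph{orthogonal} matrix $A(t)$ with $A(0)=\mathrm{Id}$. As $\mu_\beta$ is a centered product Gaussian of variance $\beta^{-1}$, a Wick computation conditional on the clocks collapses the fourth moment defining $S_t$ to a second moment:
\begin{equation*}
S_t(x)=\mathbb E\big[A_{x,0}(t)^2\big],
\end{equation*}
and, more generally, the family $\big\{\,\mathbb E\big[A_{x,0}(t)A_{y,0}(t)\big] : x,y\in\mathbb Z\,\big\}$ (whose diagonal is $S_t$) obeys an \emph{autonomous} linear lattice equation $\partial_t v_t=\mathbb L v_t$ with $v_0(x,y)=\delta_{x,0}\delta_{y,0}$; equivalently $\mathcal L$ maps polynomials of degree $\le 2$ into themselves, so $\{S_t\}$ together with the off-diagonal correlations $\mathbb E_{\mu_\beta}[(\eta_0(0)^2-\tfrac1\beta)\eta_x(t)\eta_y(t)]$ form a closed system. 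In particular $S_t\ge 0$ and $\sum_x S_t(x)=1$, so $S_t$ is a probability law on $\mathbb Z$ and $\widehat S_t$ a characteristic function with $|\widehat S_t|\le 1$.

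\emph{Step 2 (Fourier reduction to a kinetic equation).} Passing to the Fourier modes $\hat a(k,t)$ diagonalises the transport part ($\hat a(k,t)=e^{2it\sin k}\hat a(k,0)$) while the exchange noise becomes an explicit quadratic operator. A direct computation shows that the antidiagonal slice $\{k+k'=\xi\}$ of $\widehat v_t$ is invariant; writing $g^\xi_t(k)=\widehat v_t(k,\xi-k)$, so that $\widehat S_t(\xi)=\tfrac1{2\pi}\int g^\xi_t(k)\,dk$, one obtains for each fixed $\xi$ the \emph{closed} linear integro-differential (kinetic) equation
\begin{equation*}
\partial_t g^\xi_t(k)= i\big[\omega(k)+\omega(\xi-k)\big]\,g^\xi_t(k)-\big[\ell(k)+\ell(\xi-k)\big]\,g^\xi_t(k)+\big(\mathcal R^\xi g^\xi_t\big)(k),
\end{equation*}
with $\omega(k)=2\sin k$, $\ell(k)=2(1-\cos k)$, and $\mathcal R^\xi$ a rank-one ``source'' operator redistributing mass with weight $\propto(1-\cos k)$ and depending on $g^\xi_t$ only through $\widehat S_t(\xi)$ and its first near-diagonal analogue. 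Because the noise is linear, this kinetic equation is \emph{exact} — no Boltzmann–Grad limit is required.

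\emph{Step 3 (anomalous scaling limit).} I would compute the time-Laplace transform of $\widehat S_t(\zeta/n)$, an explicit matrix element of the resolvent $(\lambda-\mathbb L^{\zeta/n})^{-1}$, and analyse it as $n\to\infty$ with $\lambda=\mu n^{-3/2}$. The relevant slow mode is the energy (the $k$-average of $g^\xi$), whose effective motion is a Lévy walk: a scattering event deposits mass on momenta $k$ with density $\propto(1-\cos k)\sim k^2$ near $k=0$, and such phonons travel ballistically with group velocity $\omega'(k)=2\cos k\approx 2$ while surviving a time of order $\ell(k)^{-1}\sim k^{-2}$, so the free-flight durations have the heavy tail $u^{-3/2}$ — and all long flights point in the \emph{same} direction, because $\cos k\approx 1$ there. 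In the limit this yields a maximally asymmetric $3/2$-stable process, i.e.\ the symbol $-\tfrac1{\sqrt2}\big(|\zeta|^{3/2}-i\,\mathrm{sgn}(\zeta)\,|\zeta|^{3/2}\big)$, which is exactly that of $-\tfrac1{\sqrt2}\{(-\Delta)^{3/4}-\nabla(-\Delta)^{1/4}\}$; the odd dispersion $\omega(k)=2\sin k$ is what keeps the skew term alive. Hence $\widehat S_{tn^{3/2}}(\zeta/n)\to\widehat P_t(\zeta)$ for every $\zeta$, and since $|\widehat S_t|\le 1$ provides domination, testing against $f,g$ and Parseval give $(\ref{ec1.12})$.

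The main obstacle is Step 3: the singular-perturbation and spectral analysis of the kinetic operator $\mathbb L^\xi$ as $\xi\to 0$ with the correct joint scaling of $\xi$ and of the Laplace parameter. One must control the resolvent uniformly near the degeneracy $k=0$ of the collision rate $\ell(k)+\ell(\xi-k)$, where the group velocity does \emph{not} vanish — this simultaneous degeneracy and ballisticity is precisely what forces the exponent $3/2$ and the skewness — while showing that the fast modes and the commutator between transport and collision are negligible, and that the rank-one source term is a controllable perturbation. Carrying these asymptotics through quantitatively is also what pins down the exact constant $\tfrac1{\sqrt2}$ and the precise skew coefficient.
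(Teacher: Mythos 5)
Your Steps 1 and 2 are correct and rest on the same structural fact the paper exploits: because $V$ is quadratic, the generator maps degree-two polynomials to degree-two polynomials, so the energy correlation $S_t$ closes together with the off-diagonal correlations $\mathbb{E}_{\mu_\beta}\big[(\eta_0(0)^2-\tfrac{1}{\beta})\eta_x(t)\eta_y(t)\big]$ (the paper's $\mathbb{S}_t(x,y)$), and your identity $S_t(x)=\mathbb{E}[A_{x,0}(t)^2]$, with the resulting bounds $S_t\ge 0$, $\sum_x S_t(x)=1$, is a clean way to get tightness/domination. Where you genuinely diverge from the paper is in how the limit is extracted. The paper never performs a spectral or resolvent analysis: it writes the coupled ODEs for the one-point field $\mathcal{S}^n_t(f)$ and the two-point field $Q^n_t(h)$, and then chooses a corrector $h=h_n(f)$ solving the discrete Poisson equation $n^{-1/2}\Delta_n h+n^{1/2}\mathcal{A}_n h=2\,\nabla_n f\otimes\delta$, so that the singular coupling term cancels upon summing the two equations; what survives is $-2\,\mathcal{S}^n_t(\mathcal{D}_n h)$, and the skew fractional operator with the constant $\tfrac{1}{\sqrt2}$ emerges from an explicit Fourier computation of the multiplier of $\mathcal{D}_n h_n(f)$, the remaining terms being killed by the a priori $\ell^2$ bounds (except for $\int_0^t Q^n_s(n^{-1/2}\widetilde{\mathcal{D}}_n h)\,ds$, which needs a separate argument). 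This buys a proof in which the only ``hard analysis'' is the asymptotics of one explicit Fourier multiplier.

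The gap in your proposal is precisely the step you flag: Step 3 is not a proof but a Lévy-walk heuristic. In your route, the entire content of the theorem — the exponent $3/2$, the maximal asymmetry, and the constant $\tfrac{1}{\sqrt2}$ — must come out of a uniform singular-perturbation analysis of the resolvent $(\mu n^{-3/2}-\mathbb{L}^{\zeta/n})^{-1}$ near the degeneracy $k=0$ of the collision rate, including control of the rank-one source term and of the interplay between the non-vanishing group velocity and the vanishing scattering rate. None of this is carried out, and it is not a routine verification: it is a different and substantially heavier piece of analysis than anything in the paper's argument, and without it the identification of the limit in (\ref{ec1.12}) is only conjectural. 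If you want to complete your route, the shortcut is to notice that your kinetic equation for $g^\xi_t$ is exactly the Fourier transform of the paper's coupled system for $(\mathcal{S}^n_t,Q^n_t)$; solving for the antidiagonal slice in terms of $\widehat{S}_t(\xi)$ and substituting back is the Fourier-space incarnation of the paper's corrector $h_n(f)$, and it reduces your Step 3 to the explicit multiplier computation the paper performs.
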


A fundamental step in the proof of this theorem is the analysis of the correlation function $\{{\bb S}_t(x,y); x \neq y \in \bb Z, t \geq 0\}$ given by
\begin{equation}
{\bb S}_t(x,y) = \tfrac{\beta^2}{2} \bb E \big[ \big(\eta_0 (0)^2-\tfrac{1}{\beta} \big) \eta_x(t) \eta_y(t)\big]
\end{equation}
for any $t \geq 0$ and any $x \neq y \in \bb Z$. Notice that this definition makes perfect sense for $x =y$ and, in fact, we have ${\bb S}_t(x,x) = S_t(x)$. For notational convenience we define ${\bb S}_t(x,x)$ as equal to $S_t(x)$. However, these quantities are of different nature, since $S_t(x)$ is related to {\em energy fluctuations} and ${\bb S}_t(x,y)$ is related to {\em volume fluctuations} (for $x \neq y$).

\begin{remark}
It is not difficult to see that with a bit of technical work the techniques actually show that the distribution valued process $\{ {\mc E}_t^n (\cdot) \, ; \, t\ge 0\}$ defined for any test function $f$ by
$${\mc E}_{t}^n (f) = \cfrac{1}{\sqrt n} \sum_{x \in \ZZ} f \big (\tfrac{x}{n} \big) \{ \eta_x (tn^{3/2})^2 - \tfrac{1}{\beta}\}$$
converges as $n$ goes to infinity to an infinite dimensional $3/4$-fractional Ornstein-Uhlenbeck process, i.e. the centered Gaussian process with covariance prescribed by the right hand side of (\ref{ec1.12}).
\end{remark}

\begin{remark}
It is interesting to notice that $P_t$ is the maximally asymmetric $3/2$-Levy distribution.
It has power law as $|x|^{-5/2}$ towards the diffusive peak and stretched exponential as $\exp[-|x|^3]$ towards the exterior of the sound cone (\cite[Chapter 4]{UZ}). As mentioned to us by H. Spohn, this reflects the expected physical property that  no propagation beyond the sound cone occurs.
\end{remark}

\begin{remark}
With a bit of technical work the proof of this theorem can be adapted to obtain a similar statement for a harmonic chain perturbed by the momentum exchanging noise (see \cite{JKO2} where such statement is proved for the Wigner function). In this case the skew fractional heat equation is replaced by the (symmetric) fractional heat equation.
\end{remark}

\begin{proof}
Denote by $\mc C_c^\infty(\bb R^d)$ the space of infinitely differentiable functions $f: \bb R^d \to \bb R$ of compact support. Then, $\| f \|_{2,n}$ denotes the weighted $\ell^2(\bb Z^d)$-norm
\begin{equation}
\|f\|_{2,n} = \sqrt{ \vphantom{H^H_H}\smash{\tfrac{1}{n^d} \sum_{x \in \bb Z^d} f\big( \tfrac{x}{n} \big)^2}}.
\end{equation}

Let $g \in \mc C_c^\infty(\bb R)$ be a fixed function.
For each $n \in \bb N$ we define the field $\{\mc S_t^n; t \geq 0\}$ as
\begin{equation}
\mc S_t^n(f) = \tfrac{1}{n}\!\! \sum_{x, y \in \bb Z} g\big(\tfrac{\vphantom{y}x}{n} \big) f\big( \tfrac{y}{n}\big) S_{tn^{3/2}}(y-x)
\end{equation}
for any $t \geq 0$ and any $f \in \mc C_c^\infty(\bb R)$. 
\noindent
By the Cauchy-Schwarz inequality we have the {\em a priori} bound
\begin{equation}
\label{ec3.4}
\big| \mc S_t^n(f) \big| \leq \|g\|_{2,n} \| f\|_{2,n}
\end{equation}
for any $t \geq 0$, any $n \in \bb N$ and any $f,g \in \mc C_c^\infty(\bb R)$. For a function $h \in \mc C_c^\infty(\bb R^2)$ we define $\{ Q_t^n(h); t \geq 0\}$ as
\begin{equation}
Q_t^n(h) = \tfrac{1}{n^{3/2}} \sum_{x \in \ZZ} \; \sum_{y\ne z \in \bb Z} g\big(\tfrac{\vphantom{y}x}{n} \big) h \big(\tfrac{y}{n}, \tfrac{\vphantom{y} z}{n}\big) {\bb S}_{t n^{3/2}}(y-x,z-x).
\end{equation}
Notice that $Q_t^n(h)$ depends only on the symmetric part of the function $h$. Therefore, we will always assume, without loss of generality, that $h(x,y) = h(y,x)$ for any $x,y \in \bb Z$. We point out that $Q_t^n(h)$ does not depend on the values of $h$ at the diagonal $\{x=y\}$. We have the {\em a priori} bound
\begin{equation}
\label{ec3.6}
\big| Q_t^n(h) \big| \leq 2 \|g\|_{2,n} \|{\tilde h} \|_{2,n},
\end{equation}
where $\tilde h$ is defined by ${\tilde h} \big(\tfrac{\vphantom{y}x}{n} , \tfrac{y}{n}\big) = h \big(\tfrac{\vphantom{y}x}{n} , \tfrac{y}{n}\big) \, {\bf 1}_{x \ne y}.$

For a function $f \in \mc C_c^\infty(\bb R)$, we define  $\Delta_n f: \bb R \to \bb R$ as
\begin{equation}
\Delta_n f \big( \tfrac{x}{n}\big) = n^2\Big( f\big(\tfrac{x\plus 1}{n} \big) + f\big( \tfrac{x\minus 1}{n} \big) - 2 f\big(\tfrac{x}{n}\big) \Big).
\end{equation}
In other words, $\Delta_n f$ is a discrete approximation of the second derivative of $f$. We also define $\nabla_n f \otimes \delta : \sfrac{1}{n} \bb Z^2 \to \bb R$ as
\begin{equation}
\label{eq:3.9}
\big(\nabla_n f \otimes \delta\big) \big( \tfrac{x}{n}, \tfrac{y}{n}\big) =
\begin{cases}
\frac{n^2}{2}\big(f\big(\tfrac{x+1}{n}\big) - f\big(\tfrac{x}{n}\big)\big); & y =x\plus 1\\
\frac{n^2}{2}\big(f\big(\tfrac{x}{n}\big) - f\big(\tfrac{x-1}{n}\big)\big); & y =x\minus 1\\
0; & \text{ otherwise.}
\end{cases}
\end{equation}
Less evident than the interpretation of $\Delta_n f$, $\nabla_n f \otimes \delta$ turns out to be a discrete approximation of the (two dimensional) distribution $f'\!(x) \otimes \delta(x=y)$, where $\delta(x=y)$ is the $\delta$ of Dirac at the line $x=y$. We have that
\begin{equation}
\label{ec3.10}
\tfrac{d}{dt} \mc S_t^n(f) = -2Q_t^n(\nabla_n f \otimes \delta) +  \mc S_t^n(\tfrac{1}{\sqrt n}\Delta_n f).
\end{equation}
In this equation we interpret the term $Q_t^n(\nabla_n f \otimes \delta)$ in the obvious way.
By the {\em a priori} bound \eqref{ec3.4}, the term $ \mc S_t^n(\frac{1}{\sqrt n} \Delta_n f)$ is negligible, as $n \to \infty$.We describe now the equation satisfied by $Q_t^n(h)$. For this we need some extra definitions. For $h \in \mc C_c^\infty(\bb R^2)$ we define $\Delta_n h : \bb R^2 \to \bb R$ as
\begin{equation}
\Delta_n h\big( \tfrac{\vphantom{y}x}{n}, \tfrac{y}{n}\big) = n^2\Big( h\big( \tfrac{\vphantom{y}x+1}{n}, \tfrac{y}{n}\big)+h\big( \tfrac{\vphantom{y}x-1}{n}, \tfrac{y}{n}\big)+h\big( \tfrac{\vphantom{y}x}{n}, \tfrac{y+1}{n}\big)+ h\big( \tfrac{\vphantom{y}x}{n}, \tfrac{y-1}{n}\big) - 4 h\big( \tfrac{\vphantom{y}x}{n}, \tfrac{y}{n}\big)\Big).
\end{equation}
In words, $\Delta_n h$ is a discrete approximation of the $2d$ Laplacian of $h$. We also define $\mc A_n h: \bb R \to \bb R$ as
\begin{equation}
\mc A_n h\big( \tfrac{\vphantom{y}x}{n}, \tfrac{y}{n}\big) = n\Big(h\big( \tfrac{\vphantom{y}x}{n}, \tfrac{y-1}{n}\big)+ h\big( \tfrac{\vphantom{y}x-1}{n}, \tfrac{y}{n}\big)- h\big( \tfrac{\vphantom{y}x}{n}, \tfrac{y+1}{n}\big)-h\big( \tfrac{\vphantom{y}x+1}{n}, \tfrac{y}{n}\big)\Big).
\end{equation}
The function $\mc A_n h$ is a discrete approximation of the directional derivative $(-2,-2) \cdot \nabla h$. Let us define $\mc D_n h : \sfrac{1}{n} \bb Z \to \bb R$ as
\begin{equation}
\mc D_n h\big( \tfrac{x}{n} \big) = n \Big( h \big(\tfrac{x}{n}, \tfrac{x+1}{n}\big) - h \big( \tfrac{x-1}{n}, \tfrac{x}{n} \big) \Big)
\end{equation}
and $\widetilde {\mc D}_n h :\sfrac{1}{n} \bb Z^2 \to \bb R$ as
\begin{equation}
\widetilde{\mc D}_n h (\tfrac{x}{n},\tfrac{y}{n}) =
\begin{cases}
n^2 \big(h\big(\tfrac{x}{n}, \tfrac{x+1}{n}\big)-h\big(\tfrac{x}{n}, \tfrac{x}{n}\big)\big); & y =x+1\\
n^2 \big(h\big(\tfrac{x-1}{n}, \tfrac{x}{n}\big)-h\big(\tfrac{x-1}{n}, \tfrac{x-1}{n}\big)\big); & y=x-1\\
0; & \text{ otherwise.}
\end{cases}
\end{equation}
The function $\mc D_n h$ is a discrete approximation of the directional derivative of $h$ along the diagonal $x=y$, while $\widetilde{\mc D}_n h$ is a discrete approximation of the distribution $\partial_y h(x,x) \otimes \delta(x=y)$.
Finally we can write down the equation satisfied by the field $Q_t^n(h)$:
\begin{equation}
\label{ec3.15}
\tfrac{d}{dt} Q_t^n(h) = Q_t^n\big(n^{-1/2}\Delta_n h+ n^{1/2}\mc A_n h\big) -2 \mc S_t^n\big( \mc D_n h\big) + 2 Q_t^n\big(n^{-1/2}\widetilde{\mc D}_n h\big).
\end{equation}

Given $f \in \mc C_c^\infty(\bb R)$, if we choose $h:=h_n (f)$ such that
$$n^{-1/2}\Delta_n h+ n^{1/2}\mc A_n h = 2 \nabla_n f \otimes \delta$$
then summing  \eqref{ec3.10} and \eqref{ec3.15} we get
\begin{equation*}
\tfrac{d}{dt} \mc S_t^n(f) = - \tfrac{d}{dt} Q_t^n(h) +  \mc S_t^n(\tfrac{1}{\sqrt n}\Delta_n f) -2 \mc S_t^n\big( \mc D_n h\big) + 2 Q_t^n\big(n^{-1/2}\widetilde{\mc D}_n h\big).
\end{equation*}
We integrate in time the previous expression. By the a priori bounds, the term $\int_0^t \mc S_s^n(\tfrac{1}{\sqrt n}\Delta_n f) ds $ is small as well as $\int_0^t \tfrac{d}{ds} Q_s^n (h) ds =Q_t^n(h) - Q_0^n (h)$. The term $$\int_0^t Q_s^n( n^{-1/2} \widetilde{\mc D}_n h)\,  ds$$ is quite singular since it involves an approximation of a distribution but it turns out to be negligible, although this does not follow directly from the a priori bounds (see \cite{BGJ}). By using Fourier transform one can see that $-2 \mc D_n h$ converges to $ -\tfrac{1}{\sqrt{2}}\big\{ (-\Delta)^{3/4} - \nabla (-\Delta)^{1/4}\big\} f$ and we are done.

\cqfd\end{proof}

\subsection[Exponential interactions]{Anomalous diffusion for a perturbed Hamiltonian system with exponential interactions}
\label{sec:exp.case}
We investigate here in more details the exponential case $V_{\rm{exp}} ( r ) = e^{-r} -1 +r$. The deterministic system with generator (\ref{eq:A_per}) and with the exponential potential above is well known in the integrable systems literature {\footnote{It seems that although the Hamiltonian structure of the Kac-van-Moerbecke system was known, the interpretation of the latter as a chain of oscillators with exponential kinetic energy and exponential interaction was not observed before \cite{BS}.}}. It has been introduced in \cite{KVM} by Kac and van Moerbecke and was shown to be completely integrable. Consequently, using Mazur's inequality, it is easy to show that the energy transport is ballistic (\cite{BS}). 

As we will see the situation dramatically changes when the momentum exchange noise is added: the energy transport is no more ballistic but superdiffusive. Thus the situation is similar to the harmonic case. Nevertheless we expect the time autocorrelation of the current to decay like $t^{-2/3}$. We are not able to show this but we proved in \cite{BG} lower bounds sufficient to imply superdiffusivity.

The results are stated in infinite volume: we consider the stochastic energy-volume conserving dynamics $\{\eta (t)\}_{t \ge 0}$ with potential $V:=V_{\rm{exp}}$. Its generator is given by ${\mc L}={\mc A} + \gamma {\mc S}$ where ${\mc A}$ and ${\mc S}$ are defined by (\ref{eq:1:ASdynum2}). Since the exponential potential grows very fast as $r\to -\infty$, some care has to be taken to show that the infinite dynamics is well defined (see \cite{BG}). We recall that grand canonical Gibbs measures are denoted by $\mu_{\beta,\lambda}$ and  take the form
\begin{equation*}
d\mu_{\beta,\lambda} (\eta) = \prod_{x \in \ZZ} \frac{{\rm e}^{- \beta V(\eta_x) -\lambda \eta_x} }{Z(\beta,\lambda)} d\eta_x, \quad \beta>0, \; \lambda + \beta <0.
\end{equation*}
In this section, $\beta$ and $\lambda$ are fixed and we denote by $e$ (resp. $v$) the average energy (resp. volume) w.r.t. $\mu_{\beta,\lambda}$ (see (\ref{eq:tr})). 

The microscopic energy current ${j}^{e,\gamma}_{x,x+1}$ and volume current ${j}^{v,\gamma}_{x,x+1}$ are given by
\begin{equation*}
{j}^{e,\gamma}_{x,x+1}(\eta)=-e^{-(\eta_x + \eta_{x+1})}+(e^{- \eta_x} +e^{- \eta_{x+1}})- \gamma \nabla (V (\eta_{x}))
\end{equation*}
and
\begin{equation*}
{j}^{v,\gamma}_{x,x+1}(\eta)=e^{-\eta_x} +e^{-\eta_{x+1}} -\gamma \nabla \eta_x .
\end{equation*}

We will use the compact notations  
\begin{equation*}
\omega_x= \left( \begin{array}{c} V(\eta_x) \\ \eta_x \end{array} \right), \quad J_{x,x+1}= \left( \begin{array}{c} j^{e,\gamma}_{x,x+1} \\ j^{v,\gamma}_{x,x+1} \end{array} \right).
\end{equation*}

In the hyperbolic scaling, the hydrodynamical equations for the energy profile ${\mf e}$ and the volume profile ${\mf v}$  take the form
\begin{equation}
\label{eq:hl1euler}
\begin{cases}
\partial_t {\mf e} -\, \partial_q ( ({\mf e} - {\mf v})^2) =0\\
\partial_{t} {\mf v} + 2 \, \partial_q ({\mf e -{\mf v}}) =0.
\end{cases}
\end{equation}
They can be written in the compact form $\partial_t {{X}} + \partial_q {{ \mf J}} ({{X}}) =0$ with
\begin{equation}\label{eq:hl-ss00}
{X}=
\left(
\begin{array}{c}
{\mf e}\\
{\mf v}
\end{array}
\right)
, \quad
{ \mf J }({ {X}})=
\left(
\begin{array}{c}
- ( {\mf e} - {\mf v})^2\\
2  ( {\mf e} - {\mf v})
\end{array}
\right).
\end{equation}
The differential matrix of ${ \mf J}$ is given by
\begin{equation*}
(\nabla{ \mf J})({X})=
2 \left(
\begin{array}{cc}
- ( {\mf e} - {\mf v})& {\mf e} - {\mf v} \\
1 & -1
\end{array}
\right).
\end{equation*}
For given $({e}, {v})$ we denote by $({ T}^{+}_t )_{t \ge 0}$ (resp. $({T}^{-}_t)_{t \ge 0}$) the semigroup on $S(\RR) \times S(\RR)$ generated by the linearized system
\begin{equation}\label{linearized system 1}
\partial_t \varepsilon + {M}^T\, \partial_q \varepsilon =0, \quad ({\text{resp.}} \; \partial_t \varepsilon - {M}^T \, \partial_q \varepsilon =0 ),
\end{equation}
where
\begin{equation*}
{M} := {M} ({e}, {v})= [\nabla { \mf J}] (\omega), \quad \omega= \left(
\begin{array}{c}
{ e}\\
{v}
\end{array}
\right).
\end{equation*}
We omit the dependence of these semigroups on $(e,v)$ for lightness of the notations. Above $S(\RR)$ denotes the Schwartz space of smooth rapidly decreasing functions.

The first result of \cite{BG} gives a lower bound on the time-scale for which a non-trivial evolution of the energy-volume fluctuation field can be observed. 

We take the infinite system at equilibrium under the Gibbs measure $\mu_{\bar \beta,\bar \lambda}$ corresponding to a mean energy $\bar e$ and a mean volume $\bar v$. Our goal is to study
the energy-volume fluctuation field in the time-scale $tn^{1+\alpha}$, $\alpha \ge0$:
\begin{equation}
  \label{eq:YY}
  \mathcal{Y}^{n,\alpha}_t  (\bG) =\frac{1}{\sqrt{n}} \sum_{x\in \ZZ}
  \bG\left(x/n\right) \cdot \left({\omega}_x (tn^{1+\alpha})  - {\bar{\omega}}\right),
\end{equation}
where for $q\in{\mathbb{R}}$,  $x \in \ZZ$,
\[
\bG(q) = \left(
\begin{array}{c}
G_{1} (q)\\
G_2 (q)
\end{array}
\right), \quad
{\omega}_x= \left(
\begin{array}{c}
V (\eta_x) \\
\eta_x
\end{array}
\right)
\]
and $G_1, G_2$ are test functions belonging to $S(\RR)$.

We need to introduce some notation. For each $z \ge 0$, let $H_z (x) = (-1)^z e^{x^2} \tfrac{d^z}{dx^z} e^{-x^2}$ be the Hermite polynomial of order $z$ and $h_z (x) =(z! {\sqrt{2\pi}})^{-1} H_{z} (x) e^{-x^2}$ the Hermite function. The set $\{ h_z, z\ge 0\}$ is an orthonormal basis of ${\bb L}^2 (\RR)$. Consider in ${\bb L}^2(\RR)$ the operator $K_0 = x^2-\Delta$, $\Delta$ being the Laplacian on $\RR$. For an integer $k \ge 0$, denote by ${\bb H}_k$ the Hilbert space obtained by taking the completion of  $S (\RR)$ under the norm induced by the scalar product $\langle \cdot,\cdot\rangle_{k}$ defined by $\langle f, g \rangle_k= \langle f, K_0^k g \rangle_0$, where $\langle \cdot,\cdot\rangle_0$ denotes the inner product of $\LL^2 (\RR)$ and denote by ${\bb H}_{-k}$ the dual of ${\bb H}_k$, relatively to this inner product. Let $\langle\cdot\rangle$ represent the average with respect to the Lebesgue measure.

If $E$ is a Polish space then $D(\RR^+, E)$ (resp, $C(\RR^+, E)$) denotes the space of $E$-valued functions, right continuous with left limits (resp. continuous), endowed with the Skorohod (resp. uniform) topology. Let $Q^{n,\alpha}$ be the probability measure on ${D}(\RR^+,{\bb H}_{-k} \times {\bb H}_{-k})$ induced by the fluctuation field ${\mc Y}^{n,\alpha}_t$ and $\mu_{\beta,\lambda}$. Let $\mathbb{P}_{\mu_{ \beta,\lambda}}$ denote the probability measure on ${ D}(\RR^+, \RR^{\ZZ})$ induced by $(\eta(t))_{t\geq{0}}$ and $\mu_{\beta, \lambda}$. Let $\mathbb{E}_{\mu_{\beta,\lambda}}$ denote the expectation with respect to $\mathbb{P}_{\mu_{\beta,\lambda}}$.

\begin{theorem}[\cite{BG}]
\label{th:fluct-hs}
Fix an integer $k>2$. Denote by $Q$ the probability measure on $C(\RR^+, {\bb H}_{-k} \times {\bb H}_{-k})$ corresponding to a stationary Gaussian process with mean $0$ and covariance given by
\begin{equation*}
{\mathbb E}_{Q} \left[ \mathcal{Y}_t (\mb H) \, \mathcal{ Y}_s (\mb G) \right] =  \langle \,{T}_t^{-} \mb H\;  \cdot \;   \chi \;  {T}_s^{-}\mb G \,  \rangle
\end{equation*}
for every $0 \le s \le t$ and $\mb H, \mb G$ in ${\bb H}_k \times {\bb H}_k$. Here ${\chi}:={\chi} ({ \beta}, {\lambda})$ is the equilibrium covariance matrix of ${ \omega}_0$. Then, the sequence $(Q^{n,0})_{n \ge 1}$ converges weakly to the probability measure $Q$.
\end{theorem}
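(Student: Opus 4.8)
This is an equilibrium fluctuation statement in the hyperbolic (Euler) time scale $tn$, and the plan is to run the standard martingale scheme for fluctuation fields: establish tightness of $(Q^{n,0})_{n\ge 1}$ on $D(\RR^+,\bb H_{-k}\times\bb H_{-k})$, show that every limit point solves a well-posed linear martingale problem, and identify that solution with the Gaussian process $Q$. Setting $\Phi^n_\bG(\eta)=\tfrac{1}{\sqrt n}\sum_{x}\bG(x/n)\cdot(\omega_x-\bar\omega)$ and applying Dynkin's formula to the process $t\mapsto\eta(tn)$, which has generator $n{\mc L}$, one gets
\begin{equation*}
{\mc Y}^{n,0}_t(\bG)={\mc Y}^{n,0}_0(\bG)+\int_0^t\bigl(n{\mc L}\,\Phi^n_\bG\bigr)(\eta(sn))\,ds+M^n_t(\bG)
\end{equation*}
with $M^n_t(\bG)$ a martingale. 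Using the microscopic conservation law ${\mc L}\,\omega_x=J_{x-1,x}-J_{x,x+1}$, summing by parts (clean because $\bG$ has compact support) and subtracting the constant $\bar J=\mf J(\bar\omega)$ (legitimate since $\sum_x[\bG((x+1)/n)-\bG(x/n)]=0$), the drift becomes $\tfrac{1}{\sqrt n}\sum_{x}(\nabla_n\bG)(x/n)\cdot\bigl(J_{x,x+1}(sn)-\bar J\bigr)$, where $(\nabla_n\bG)(q)=n(\bG(q+1/n)-\bG(q))\to\bG'(q)$.

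Two reductions then yield the limit. First, one checks that the martingale term is negligible: ${\mc A}$ being a first-order operator contributes no carré du champ, and a single exchange of the noise ${\mc S}$ changes ${\mc Y}^{n,0}_t(\bG)$ by $O(n^{-3/2})$, so after the $n$-acceleration $\langle M^n(\bG)\rangle_t=O(n^{-1})\to 0$. Second — and this is the core of the argument — one invokes a Boltzmann--Gibbs principle in the hyperbolic scaling: for a local function $g$ with $\mu_{\beta,\lambda}(g)=0$ and smooth $\psi$,
\begin{equation*}
\lim_{n\to\infty}\mathbb E_{\mu_{\beta,\lambda}}\Biggl[\Bigl(\int_0^t\tfrac{1}{\sqrt n}\sum_{x}\psi(x/n)\bigl(\theta_x g-\mc P(\theta_x g)\bigr)(sn)\,ds\Bigr)^2\Biggr]=0,
\end{equation*}
where $\mc P(\theta_x g)$ is the $L^2(\mu_{\beta,\lambda})$-projection of $\theta_x g$ onto the fluctuations of the conserved fields. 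Applied componentwise to $J_{0,1}-\bar J$ — whose equilibrium average is, by definition of $\mf J$, a function of $(e,v)$ with gradient the Jacobian $M=[\nabla\mf J](\bar\omega)$ — this identifies $\mc P(\theta_x(J_{0,1}-\bar J))$ with $M(\omega_x-\bar\omega)$, so the drift is asymptotically ${\mc Y}^{n,0}_s(M^T\nabla_n\bG)\to{\mc Y}_s(M^T\partial_q\bG)$. Hence every limit point satisfies $\tfrac{d}{dt}{\mc Y}_t(\bG)={\mc Y}_t(M^T\partial_q\bG)$ with vanishing quadratic variation, i.e. ${\mc Y}_t(\bG)={\mc Y}_0(T^-_t\bG)$, since $T^-_t$ is precisely the semigroup generated by $M^T\partial_q$.

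For the initial field, $\mu_{\beta,\lambda}$ is a product measure, so the classical (Lindeberg) CLT gives that ${\mc Y}^{n,0}_0$ converges to the Gaussian white-noise field with covariance $\langle\bG\cdot\chi\bG\rangle$, $\chi$ being the covariance matrix of $\omega_0$ under $\mu_{\beta,\lambda}$. Combining this with the previous paragraph and the uniqueness of this linear Gaussian martingale problem, the relation ${\mc Y}_t(\bG)={\mc Y}_0(T^-_t\bG)$ yields $\mathbb E_Q[{\mc Y}_t(\mb H){\mc Y}_s(\mb G)]=\langle T^-_t\mb H\cdot\chi\,T^-_s\mb G\rangle$; stationarity of $Q$ is consistent with the thermodynamic identity $M\chi=\chi M^T$. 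Tightness on $D(\RR^+,\bb H_{-k}\times\bb H_{-k})$ — where the hypothesis $k>2$ enters through Mitoma's criterion — reduces, via the $L^2$ bounds on ${\mc Y}^{n,0}_t(\bG)$ (uniform in $t$ by stationarity) and the Aldous criterion, to the two reductions above.

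The hard part will be the Boltzmann--Gibbs principle. Unlike in the exclusion or Ginzburg--Landau settings, the conservative noise ${\mc S}$ is the sole source of mixing (whence the necessity of $\gamma>0$, the relevant $H_{-1}$ bound degenerating as $\gamma\to 0$), and, more seriously, the Hamiltonian current $-V'(\eta_x)V'(\eta_{x+1})$ is \emph{symmetric} under the exchange $\eta_x\leftrightarrow\eta_{x+1}$ and hence unaffected by the noise on that bond; the needed estimate therefore cannot be read off a single-bond computation and must be obtained through a genuine multiscale (one-block/two-blocks) argument, using the $\ell^{-2}$ spectral gap of ${\mc S}$ on boxes of size $\ell$ together with equivalence of ensembles. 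A further difficulty specific to $V_{\rm exp}$ is that the currents involve $e^{-\eta_x}$ and $e^{-2\eta_x}$ while $\eta_x$ enters $\omega_x$ linearly, so all of the above — and already the construction of the infinite-volume dynamics — must be carried out with only the moment bounds available from $\mu_{\beta,\lambda}$ and the extra care documented in \cite{BG}.
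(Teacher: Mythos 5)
Your overall skeleton — Dynkin's formula for the accelerated process, summation by parts to bring out $\nabla_n\bG\cdot(J_{x,x+1}-\bar J)$, negligibility of the martingale at order $n^{-1}$, a Boltzmann--Gibbs principle to replace the normalized current by its linear projection $M(\omega_x-\bar\omega)$, the product-measure CLT for ${\mc Y}^{n,0}_0$, and Mitoma/Aldous tightness with $k>2$ — is exactly the reduction the paper describes: everything is funneled into the equilibrium Boltzmann--Gibbs principle (\ref{eq:4:BGvar}) for $g=J_{0,1}$, with the projection ${\mc P}_{e,v}$ and normalized currents (\ref{norm. currents}) defined as you define them.

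Where you part ways with the paper is in how that principle is proved, and your stated reason for doing so is not correct. You claim the estimate ``cannot be read off a single-bond computation'' because the Hamiltonian current is symmetric under $\eta_x\leftrightarrow\eta_{x+1}$; but ${\mc S}$ is a sum over \emph{all} bonds, so the exchanges at $(x-1,x)$ and $(x+1,x+2)$ act nontrivially on $\hat J_{x,x+1}$, and the paper's route is precisely the direct Kipnis--Varadhan bound (\ref{eq:4:inequS}) against the symmetric part $\gamma{\mc S}$ alone: after the change of variables $\xi_x=e^{-\eta_x}$ the normalized current sits in the second Laguerre chaos, where ${\mc S}$ acts as a two-particle exchange walk and the ${\bb H}_{-1}$ norm of $n^{-1/2}\sum_x H(x/n)\hat J_{x,x+1}$ is computable explicitly (the same mechanism as in the ASEP duality computations the paper cites for Theorem \ref{th:diffusivity}); the only delicacy is keeping the $\tfrac{1}{tn^{1+\alpha}}$ regularization because ${\mc S}$ is degenerate. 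Your proposed substitute — a one-block/two-blocks argument using the $\ell^{-2}$ spectral gap of ${\mc S}$ on boxes plus equivalence of ensembles — also has a gap as stated: on a finite box the exchange noise conserves the entire unordered multiset $\{\eta_x\}$, not merely the energy and the volume, so its spectral gap only lets you replace $\hat J_{x,x+1}$ by its symmetrization over permutations of the block (a U-statistic), and a further concentration/equivalence-of-ensembles step for that symmetrized object — or an appeal to the full generator ${\mc A}+\gamma{\mc S}$ — is needed to reach a function of the block-averaged energy and volume. That step is plausibly doable, but it is not supplied by the spectral gap of ${\mc S}$ alone, and it is considerably heavier than the direct resolvent computation the paper actually uses.
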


The theorem above means that in the hyperbolic scaling the fluctuations are trivial: the initial fluctuations are transported by the linearized system of (\ref{eq:hl1euler}). To see a nontrivial behavior we have to study, in the transport frame, the fluctuations at a longer time scale $t n^{1+\alpha}$, with $\alpha>0$. Thus, we consider the fluctuation field ${\widehat {\mc Y}}_{\cdot}^{n,\alpha}$, $\alpha>0$, defined, for any $\bG \in S(\RR) \times S(\RR)$, by
\begin{equation}\label{longer density field}
{\widehat {\mc Y}}_t^{n,\alpha} (\bG)= {\mc Y}_t^{n, \alpha} \left( {T}^+_{t n^{\alpha}} \bG \right).
\end{equation}

Our second main theorem shows that the correct scaling exponent $\alpha$ is greater than $1/3$:

\begin{theorem}[\cite{BG}]
\label{th:fluct-ds}
Fix an integer $k>1$ and $\alpha<1/3$.  Denote by $Q$ the probability measure on $C(\RR^+, {\bb H}_{-k} \times {\bb H}_{-k})$ corresponding to a stationary Gaussian process with mean $0$ and covariance given by
\begin{equation*}
{\mathbb E}_{Q} \left[ \mathcal{Y}_t (\mb H) \, \mathcal{ Y}_s (\mb G) \right] =  \langle \,  \mb H \;\cdot    \;  {\chi} \;  \mb G   \rangle
\end{equation*}
for every $0 \le s \le t$ and $\mb H, \mb G$ in ${\bb H}_k \times {\bb H}_k$. Then, the sequence $(Q^{n,\alpha})_{n \ge 1}$ converges weakly to the probability measure $Q$.
\end{theorem}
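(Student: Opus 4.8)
The plan is to follow the same Fourier/martingale strategy used for Theorem \ref{t1} (and for Theorem \ref{th:fluct-hs}), but now carried out in the moving frame defined by $T^+_{tn^\alpha}$ and pushed to a time scale $tn^{1+\alpha}$ with $\alpha<1/3$, where one expects the fluctuations to be \emph{frozen} (no nontrivial evolution yet). First I would write the time evolution of ${\widehat{\mc Y}}^{n,\alpha}_t(\bG)$ by Dynkin's formula: since $\omega_x$ evolves under ${\mc L}={\mc A}+\gamma{\mc S}$, and the currents $J_{x,x+1}$ decompose into a Hamiltonian part and a gradient noise part, one gets
\begin{equation*}
{\widehat{\mc Y}}^{n,\alpha}_t(\bG) = {\widehat{\mc Y}}^{n,\alpha}_0(\bG) + \int_0^t {\mc B}^{n,\alpha}_s(\bG)\, ds + {\mc M}^{n,\alpha}_t(\bG),
\end{equation*}
where ${\mc M}^{n,\alpha}$ is a martingale and ${\mc B}^{n,\alpha}_s$ collects the drift. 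The transport frame is chosen precisely so that the \emph{linear} part of the drift (coming from the linearization $M^T\partial_q$ of ${\mf J}$ around $(\bar e,\bar v)$) is cancelled by the frame; what remains in ${\mc B}^{n,\alpha}_s$ is (i) the genuinely nonlinear part of the Hamiltonian current, which after the Taylor/fluctuation expansion around equilibrium is quadratic in the fluctuation field and carries a factor $n^{1+\alpha}\cdot\tfrac1{\sqrt n}\cdot\tfrac1n = n^{\alpha-1/2}$, and (ii) the Laplacian dissipative term from $\gamma{\mc S}$, of order $n^{1+\alpha}\cdot n^{-2}=n^{\alpha-1}$, which is negligible for $\alpha<1$.

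Next I would estimate the quadratic term. The key input is the Boltzmann--Gibbs principle (the same second-order, or "quadratic", replacement used in \cite{BGJ} to pass from $S_t$ and $\mathbb S_t$ to the skew-fractional equation): one replaces the quadratic functional of the field by its projection onto the conserved fields plus a remainder controlled in $\bb L^2(\mu_{\beta,\lambda})$ by an $H_{-1}$-norm estimate, using the spectral gap of the noise operator ${\mc S}$ restricted to the relevant sector (cf. Lemma \ref{lem:sg}). This produces a bound on the quadratic contribution of order $n^{\alpha-1/3}$ (the exponent $1/3$ is exactly where the diffusive-type a priori bound on the two-point function $\mathbb S_t$ balances the $n^{1+\alpha}$ time scale), so it vanishes as $n\to\infty$ precisely when $\alpha<1/3$. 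For the martingale, I would compute its quadratic variation: it is $n^{1+\alpha}$ times a sum over bonds of squared discrete gradients of $\bG$ against local functions, i.e. of order $n^{1+\alpha}\cdot n\cdot n^{-2}\cdot n^{-1}=n^{\alpha-1}$, hence ${\mc M}^{n,\alpha}_t(\bG)\to 0$ in $\bb L^2$ for $\alpha<1$. Consequently only the initial term survives, and ${\widehat{\mc Y}}^{n,\alpha}_t(\bG)$ converges to ${\mc Y}_0(\bG)$ in the sense of finite-dimensional distributions; by the equilibrium CLT for $\mu_{\beta,\lambda}$ (exponential decay of correlations, Assumption \ref{ass:V}) this limit is the mean-zero Gaussian field with covariance $\langle \bG\cdot\chi\bG\rangle$. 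Since nothing moves in the limit, ${\mc Y}_t=\mc Y_s=\mc Y_0$ and the stated covariance $\langle \bH\cdot\chi\bG\rangle$ follows.

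Finally, to upgrade convergence of finite-dimensional distributions to weak convergence in $C(\RR^+,{\bb H}_{-k}\times{\bb H}_{-k})$ with $k>1$, I would establish tightness: an a priori energy estimate shows $\sup_n\mathbb E[\|{\widehat{\mc Y}}^{n,\alpha}_t\|_{-k}^2]<\infty$ for $k$ large enough (here $k>1$ suffices because $\omega_0$ has exponential moments under $\mu_{\beta,\lambda}$, controlling the $\ell^2$-type norms), and the drift/martingale bounds above give the Aldous--Rebolledo modulus-of-continuity estimate; Mitoma's criterion then yields tightness of $(Q^{n,\alpha})$, and the identification of the limit is the content of the previous paragraph. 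The main obstacle, as in \cite{BGJ}, is the quadratic (Boltzmann--Gibbs) replacement: one must show that the quadratic current functional, which involves an approximation of a Dirac-type distribution on the diagonal (the analogue of the $\widetilde{\mc D}_n h$ term in the proof of Theorem \ref{t1}), is negligible on the time scale $tn^{1+\alpha}$ — this does not follow from the crude a priori bounds and requires the sharper second-order estimate, which is exactly what forces the threshold $\alpha<1/3$.
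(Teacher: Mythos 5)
Your proposal follows essentially the same route as the paper: everything is reduced to an equilibrium Boltzmann--Gibbs principle for the normalized currents ${\hat J}_{x,x+1}$ (current minus its projection on the conserved fields), proved in the transport frame at the time scale $tn^{1+\alpha}$ via the Kipnis--Varadhan-type bound
$\mathbb E_{\mu_{\beta,\lambda}}\bigl[\bigl(\int_0^t f(\eta(sn^{1+\alpha}))\,ds\bigr)^2\bigr] \le \frac{Ct}{n^{1+\alpha}}\bigl\langle f, \bigl(\tfrac{1}{tn^{1+\alpha}}-\gamma{\mc S}\bigr)^{-1}f\bigr\rangle_{\mu_{\beta,\lambda}}$,
with the martingale and dissipative terms negligible and tightness handled by standard criteria. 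One correction, though: the quadratic remainder is \emph{not} controlled by a spectral gap of ${\mc S}$ on the relevant sector --- no such gap is available for ${\hat J}_{0,1}$ (equivalently for $W_{0,1}=(\xi_0-\rho)(\xi_1-\rho)$ after the change of variables), and the resolvent $\langle f,(z-\gamma{\mc S})^{-1}f\rangle$ in fact diverges as $z\to 0$; if a gap held, your own bookkeeping of the drift and martingale orders would yield the result for all $\alpha<1$, contradicting the divergence in Theorem \ref{th:diffusivity}. The $1/3$ threshold comes precisely from estimating the rate of that divergence at $z=(tn^{1+\alpha})^{-1}$, exploiting the explicit exchange form of the very degenerate operator ${\mc S}$; your heuristic exponent $n^{\alpha-1/3}$ is not derived, but the threshold it predicts is the correct one.
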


The proofs of these theorems can be reduced to the proof of a so-called {\textit{equilibrium Boltzmann-Gibbs principle}}. Let us explain what it means. Observables can be divided into two classes: non-hydrodynamical and hydrodynamical. The first ones are the non conserved quantities and they fluctuate on a much faster scale than the conserved ones. Hence, they should average out and only their projection on the hydrodynamical variables should persist in the scaling limit. 
For any local function $g:= g(\eta)$, the projection ${\mc P}_{e, {v}} \, g$ of $g$ on the fields of the conserved quantities is defined by
\begin{equation*}
({\mc P}_{{ e}, { v}} g)(\eta)=g (\eta) -{\tilde g} ( e,  v) -(\nabla {\tilde g})(e, v) \cdot ({\omega_0}-{\omega})
\end{equation*}
where ${\tilde g} (e, v) = \langle g \rangle_{\mu_{\beta,\lambda}}$ and  $\nabla {\tilde g}$ is the gradient of the function ${\tilde g}$. 
As explained above we expect that in the Euler time scale, for any test function $\bH \in S(\RR) \times S(\RR)$, the space-time variance
\begin{equation}
\label{eq:4:BGvar}
\lim_{n \to \infty} {\mathbb E}_{\mu_{\beta, \lambda}} \left[ \left( \int_0^{t} \, \cfrac{1}{\sqrt{n}} \sum_{x \in \ZZ} \mb H\left( x/n \right)\cdot \left[\theta_x {\mc P}_{e, v}\,  g  \, (\eta (sn) )\right] \, ds\right)^2\right] =0
\end{equation}
vanishes as $n$ goes to infinity. In fact it suffices to show (\ref{eq:4:BGvar}) for the function $g=J_{0,1}${ \footnote{For Theorem \ref{th:fluct-ds}, the Boltzmann-Gibbs principle has to be proved in the longer time scale $t n^{1+ \alpha}$ and in the transport frame.}}. Thus let us first define the {\textit{normalized}} currents by
\begin{equation}
\label{norm. currents}
\begin{split}
{\hat J}_{x,x+1}& = \left[\theta_x {\mc P}_{\bar e, \bar v}\,  J_{0,1} \right]= \left(
\begin{array}{c}
{j}^{e,\gamma}_{x,x+1}(\eta)\\
{j}^{v,\gamma}_{x,x+1}(\eta)
\end{array}
\right)
- {{\mf J}} ({ \omega}) - (\nabla {{\mf J}}) ({\omega})
\left(
\begin{array}{c}
V_{\rm{exp}} (\eta_x) -{e}\\
\eta_x -{v}
\end{array}
\right).
\end{split}
\end{equation}

To estimate the space-time variance involved we use the following inequality (see \cite{KLO-book}):
\begin{equation}
\label{eq:4:inequS}
\begin{split}
 {\mathbb E}_{\mu_{\beta,\lambda}} \left[ \left( \int_0^t f (\eta (s n^{1+\alpha})) \, ds \right)^2\right] \, &\le  \, \cfrac{C t} {n^{1+\alpha}} \left\langle  f\;, \left( \cfrac{1}{t n^{1+\alpha}} - \gamma {\mc S} \right)^{-1} f \right\rangle_{\mu_{\beta,\lambda}}
 \end{split}
\end{equation}
where $f= \cfrac{1}{\sqrt{n}} \sum_{x \in \ZZ} \mb H\left( x/n \right)\cdot {\hat J}_{x,x+1}$. Due to the very simple form of the operator ${\mc S}$ the RHS of (\ref{eq:4:inequS}) can be estimated and shown to vanish as $n$ goes to infinity. Nevertheless it has to be done with some care since ${\mc S}$ is very degenerate so that without the term $\frac{1}{t n^{1+\alpha}}$ the RHS of (\ref{eq:4:inequS}) blows up.

Theorem \ref{th:fluct-ds} does not exclude the possibility of normal fluctuations, i.e. the convergence in law of the fluctuation field of the two conserved quantities to an infinite dimensional Ornstein-Uhlenbeck process in the diffusive time scale ($\alpha=1$). To see that it is not the case we will show that the diffusion coefficient ${\mc D}:={\mc D} ({e}, { v})$ appearing in this hypothetical limiting process would be infinite, excluding thus this possibility. Up to a constant matrix coming from a martingale term (due to the noise) and thus irrelevant for us (see \cite{BBO2}, \cite{BS}), the matrix coefficient ${\mc D}$ is defined by the Green-Kubo formula
\begin{equation}
\label{eq:GK00}
{\mc D} = \int_{0}^{\infty} {\mathbb E}_{\mu_{\beta, \lambda}} \left[ \sum_{x \in \ZZ} {\hat J}_{x,x+1} (t) \left[ {\hat J}_{0,1} (0) \right]^T \right] \, dt.
\end{equation}
The signature of the superdiffusive behavior of the system is seen in the divergence of ${\mc D}$, i.e. in a slow decay of the current-current correlation function. To study the latter we introduce its Laplace transform
\begin{equation*}
\begin{split}
{\mc F} (\gamma, z) = \int_{0}^{\infty} e^{-z t} \, {\mathbb E}_{\mu_{\beta, \lambda}} \left[ \sum_{x \in \ZZ} {\hat J}_{x,x+1} (t) \left[  {\hat J}_{0,1} (0)\right]^T  \right] \, dt
\end{split}
\end{equation*}
which is well defined for any $z>0$. This can be rewritten as
\begin{equation*}
\begin{split}
{\mc F} (\gamma, z) = \ll {\hat J}_{0,1}, (z-{\mc L})^{-1} {\hat J}_{0,1} \gg_{\beta,\lambda}
\end{split}
\end{equation*}
where $\ll \cdot, \cdot \gg_{\beta,\lambda}$ is the semi-inner product defined with respect to $\mu_{\beta,\lambda}$ in the same way as in (\ref{eq:scalarproduct>>}).

Our third theorem is the following lower bound on ${\mc F} (\gamma, z)$. Observe that ${\mc F} (\gamma, z)$ is a square matrix of size $2$ whose entry $(i,j)$ is denoted by ${\mc F}_{i,j}$.
\begin{theorem}[\cite{BG}]
\label{th:diffusivity}
Fix $\gamma>0$. There exists a positive constant $c:=c(\gamma)>0$ such that
\begin{equation*}
{\mc F}_{1,1} (\gamma, z) \ge c z^{-1/4}
\end{equation*}
and
\begin{equation*}
{\mc F}_{i,j} (\gamma, z)=0, \quad (i,j) \ne (1,1).
\end{equation*}
Moreover, there exists a positive constant $C:=C(\gamma)$ such that for any $z>0$,
\begin{equation}
\label{eq:F11c}
C^{-1} {\mc F}_{1,1} (1,z/\gamma) \le {\mc F}_{1,1} (\gamma, z) \le C {\mc F}_{1,1} (1,z/\gamma).
\end{equation}
\end{theorem}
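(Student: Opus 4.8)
The plan is to reduce everything to a single scalar variational problem for the energy current and then estimate it by a careful choice of test function. First I would observe that the noise operator $\mc S$ acts only on the $\eta$'s through exchanges of nearest neighbours, and — exactly as in the harmonic computation of Theorem \ref{th:GK-harm-exch} — the relevant degeneracy structure is governed by the symmetry properties of the currents under the exchange dynamics. The key structural fact to isolate is that, in the variables $\omega_x=(V(\eta_x),\eta_x)^T$, the volume component $j^{v,\gamma}_{x,x+1}$ of the normalized current $\hat J_{0,1}$ is, up to gradients and a term killed by the projection $\mc P_{e,v}$, already a discrete gradient (look at the explicit formulas: $j^{v,\gamma}_{x,x+1}=e^{-\eta_x}+e^{-\eta_{x+1}}-\gamma\nabla\eta_x$, and $e^{-\eta_x}+e^{-\eta_{x+1}}$ differs from $2V'$ by a gradient). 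Since gradients have zero $\ll\cdot,\cdot\gg_{\beta,\lambda}$-norm and are annihilated when paired under $(z-\mc L)^{-1}$ against antisymmetric observables in the right parity class, this forces $\mc F_{i,j}(\gamma,z)=0$ for all $(i,j)\neq(1,1)$. I would make this rigorous by writing $\hat J^{(2)}_{0,1}=\nabla(\text{local function})$ modulo a function in the range of $\mc L$ restricted to the appropriate sector, then invoking that $\ll\nabla g,(z-\mc L)^{-1}h\gg_{\beta,\lambda}=0$.

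Next, for the diagonal entry $\mc F_{1,1}$, the plan is to use the variational characterization
\[
\mc F_{1,1}(\gamma,z)=\ll \hat J^{(1)}_{0,1}, (z-\mc L)^{-1}\hat J^{(1)}_{0,1}\gg_{\beta,\lambda}
=\sup_{f}\Big\{2\ll \hat J^{(1)}_{0,1},f\gg_{\beta,\lambda} - \ll f,(z-\mc L)f\gg_{\beta,\lambda}\Big\}
\]
(valid after symmetrization, using that the antisymmetric part $\mc A$ contributes nothing to the quadratic form $\ll f,(z-\mc L)f\gg_{\beta,\lambda}=z\ll f,f\gg_{\beta,\lambda}+\gamma\ll f,-\mc S f\gg_{\beta,\lambda}$). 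The lower bound $\mc F_{1,1}(\gamma,z)\geq cz^{-1/4}$ will come from plugging in a well-chosen trial function $f$. Following the philosophy of the harmonic case and of the $t^{-2/3}$ heuristic (so that the Laplace transform behaves like $z^{-1/4}$, matching $\int_0^\infty e^{-zt}t^{-2/3}\,dt\sim z^{-1/3}$ — here one must be careful: the $3/2$-superdiffusion of the symmetric case gives $z^{-1/3}$, but the exponential model's skewness changes the exponent, and $z^{-1/4}$ is the honest lower bound one can prove), I would take $f$ to be a truncation in Fourier space of the formal solution $(-\mc A)^{-1}$ applied to $\hat J^{(1)}_{0,1}$, projected onto the sector orthogonal to the conserved fields. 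Concretely, one expands $f$ in the degree-two polynomial sector spanned by $\{\eta_x\eta_y\}$, solves the linear recursion coming from $\mc A$ explicitly (this is the same computation as in Theorem \ref{th:GK-harm-exch}, since $\mc A$ still maps degree-two polynomials to degree-two polynomials when linearized around equilibrium), and cuts off the low-frequency modes at scale $\sqrt z$. Estimating the two terms in the variational expression then yields the $z^{-1/4}$ lower bound.

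The scaling identity \eqref{eq:F11c} is the easiest part: substituting $\eta\mapsto\eta$ and rescaling time, one checks that $\mc L=\mc A+\gamma\mc S$ and $z-\mc L$ transform under $\gamma\mapsto 1$, $z\mapsto z/\gamma$ in a controlled way. More precisely, $\mc A$ is homogeneous of degree one and $\mc S$ of degree zero in a natural sense with respect to the time rescaling $t\mapsto \gamma t$; since $\mc A$ and $\mc S$ do not commute one does not get an exact identity but, bounding the resolvent $\ll\hat J_{0,1},(z-\mc A-\gamma\mc S)^{-1}\hat J_{0,1}\gg$ above and below by comparing the quadratic forms $z\ll f,f\gg+\gamma\ll f,-\mc S f\gg$ against $(z/\gamma)\ll f,f\gg+\ll f,-\mc S f\gg$ times $\gamma$, one gets \eqref{eq:F11c} with a universal constant $C$. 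The main obstacle, and the step I expect to require real work, is the lower bound: one must exhibit a trial function that is simultaneously (i) in the correct parity/sector so that it pairs nontrivially with $\hat J^{(1)}_{0,1}$, (ii) has controlled $\mc S$-Dirichlet form despite the extreme degeneracy of $\mc S$ (which is why the $\frac{1}{tn^{1+\alpha}}$ regularization was needed in \eqref{eq:4:inequS}), and (iii) whose contribution genuinely diverges as $z\to 0$. Getting the sharp power $z^{-1/4}$ rather than something weaker requires the explicit diagonalization of the linearized Liouville operator and a delicate frequency-space estimate near the resonant (sound) modes; this is where the skewness of the exponential model enters and where the argument differs substantially from the reversible case.
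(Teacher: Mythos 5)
Your argument for the off-diagonal vanishing is on the right track: for $V(r)=e^{-r}-1+r$ one has $e^{-\eta_x}=V(\eta_x)-\eta_x+1$, so the non-gradient part of $j^{v,\gamma}_{x,x+1}$ is exactly a linear combination of the two conserved fields and is removed by ${\mc P}_{e,v}$, leaving a pure discrete gradient of zero $\ll\cdot,\cdot\gg$-norm; and the scaling relation \eqref{eq:F11c} is indeed an easy scaling argument. The serious gap is in the lower bound. The variational formula you invoke, $\sup_f\{2\ll {\hat J}_{0,1},f\gg-\ll f,(z-{\mc L})f\gg\}$, is \emph{not} equal to $\ll {\hat J}_{0,1},(z-{\mc L})^{-1}{\hat J}_{0,1}\gg$ when ${\mc L}$ is not self-adjoint: since $\ll f,{\mc A}f\gg=0$, that supremum equals $\ll {\hat J}_{0,1},(z-\gamma{\mc S})^{-1}{\hat J}_{0,1}\gg$, the resolvent of the symmetric part alone, which is only an \emph{upper} bound for the true quantity. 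The correct non-reversible variational formula -- the one the paper uses -- carries the extra penalization term $-\ll{\mc A}g,(z-\gamma{\mc S})^{-1}{\mc A}g\gg$, and controlling that term is precisely where the exponent $1/4$ comes from; a trial-function computation in your formula bounds the wrong object from below and proves nothing about ${\mc F}_{1,1}$.

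The second missing idea is the change of variables. Your claim that ${\mc A}$ "still maps degree-two polynomials to degree-two polynomials" in the $\eta$ variables is false for the exponential potential, since $V'(\eta)=1-e^{-\eta}$ is not linear; the explicit closed-form diagonalization of Theorem~\ref{th:GK-harm-exch} is therefore unavailable and the computation does not reduce to the harmonic one. The paper's route is to set $\xi_x=e^{-\eta_x}$, turning the Liouville part into $({\tilde{\mc A}}f)(\xi)=\sum_x\xi_x(\xi_{x+1}-\xi_{x-1})\partial_{\xi_x}f$ and the Gibbs measures into products of Gamma laws. The Laguerre-polynomial grading ${\bb H}=\oplus_n{\bb H}_n$ then satisfies ${\tilde{\mc S}}:{\bb H}_n\to{\bb H}_n$ but ${\tilde{\mc A}}:{\bb H}_n\to{\bb H}_{n-1}\oplus{\bb H}_n\oplus{\bb H}_{n+1}$ -- the degree is \emph{not} preserved, unlike the harmonic case -- the relevant current becomes $W_{0,1}=(\xi_0-\rho)(\xi_1-\rho)\in{\bb H}_2$, and the (correct) variational problem is restricted to $g\in{\bb H}_2$ and estimated on ${\bb H}_1\oplus{\bb H}_2\oplus{\bb H}_3$ by adapting the ASEP resolvent techniques. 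Without this change of variables and the non-reversible variational formula, your plan cannot produce the $z^{-1/4}$ bound.
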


The last part of the theorem follows easily by a scaling argument and is in fact also valid for general potentials $V$ and for generic ``standard" anharmonic chains of oscillators.  In \cite{BDLLOP, ILOS, BS}, numerical simulations indicate a strange dependence w.r.t. the noise intensity $\gamma>0$ of the exponent $\delta$ in the energy transport coefficient $\kappa (N) \sim N^{\delta}$ ($N$ is the system size, see (\ref{eq:def_delta}) for the definition of $\kappa (N)$): $\delta:=\delta(\gamma)>0$ is increasing with the noise intensity $\gamma$. This is very surprising since the more stochasticity in the model is introduced, the more the system is superdiffusive!  The inequality (\ref{eq:F11c}) shows that the time decay of the current autocorrelation function is independent of $\gamma$ (up to possible slowly varying functions corrections, i.e. in a Tauberian sense). It is common folklore that there should be a simple relationship between the slow long-time tail decay of the autocorrelation of the current in the Green-Kubo formula (described by some power law decay) and the divergence of the thermal conductivity of open systems in their steady states. The argument is that the autocorrelation should be integrated over times of order~$N$. If we believe in this argument it means that the numerical simulations of \cite{BDLLOP, ILOS, BS} are not converged. There is however no clear mathematical result backing up this belief.\\

The proof of the first part of Theorem \ref{th:diffusivity} is based on the three following arguments. 
\begin{itemize}
\item  The first idea consists in performing the microscopic change of variables $\xi_x = e^{-\eta_x}$, $x \in \ZZ$, that defines a new Markov process $\{\xi (t)\}_{t \ge 0}= \{ \xi_x (t) \, ; \, x \in \ZZ\}_{t \ge 0}$ with state space $(0,+\infty)^{\ZZ}$ and conserving $\sum_x \xi_x$ and $\sum_{x} \log \xi_x$. Its generator is given by ${\tilde {\mc L}} ={\tilde {\mc A}} +\gamma {\tilde{ \mc S}}$ where for any local differentiable function $f$, 
\begin{equation*}
\begin{split}
&({\tilde {\mc A}} f)(\xi) = \sum_{x \in \ZZ} \xi_x (\xi_{x+1} -\xi_{x-1} ) (\partial_{\xi_x} f )(\xi),\\
&({\tilde {\mc S}} f)(\xi) = \sum_{x \in \ZZ} \left[ f(\xi^{x,x+1}) -f(\xi)\right].
\end{split}
\end{equation*} 
The invariant measures for $(\xi (t))_{t \ge 0} $ are obtained from the Gibbs measures $\mu_{\beta, \lambda}$ by the change of variables above. They form a family $\{ \nu_{\rho, \theta} \}_{\rho, \theta}$ of translation invariant product measures indexed by two parameters $\rho$ and $\theta$ which satisfy 
$$\rho =\nu_{\rho, \theta} (\xi_x), \quad \theta= \nu_{\rho, \theta} (\log \xi_x). $$
In fact the marginal of $\nu_{\rho, \theta}$ is a Gamma distribution. The parameters $(\rho, \theta)$ are in a one-to-one explicit correspondence with the parameters $(e,v)$.  

Rewriting ${\hat J}_{x,x+1}$ with these new variables we see that it is sufficient to prove a similar statement for the process $(\xi (t) )_{t \ge 0}$ under the equilibrium probability measure $\nu_{\rho,\theta}$. Introducing the inner product $\ll \cdot, \cdot \gg$ defined, for any local functions $f,g$ on $(0,+\infty)^{\ZZ}$ by 
\begin{equation*}
\ll f, g \gg =\sum_{x \in \ZZ} \left\{  \nu_{\rho, \theta} (f \,  \theta_x g) - \nu_{\rho,\theta} (f) \nu_{\rho, \theta} (g) \right\}
\end{equation*}
we can show that the proof of the first claim of Theorem \ref{th:diffusivity} reduces to showing that there exists a positive constant $c$ such that for any $z>0$,
\begin{equation}
\label{eq:4:w01}
\ll W_{0,1} , (z -{\tilde {\mc L}})^{-1} W_{0,1} \gg  \ge c z^{-1/4}
\end{equation}
where $W_{0,1} (\xi) =(\xi_0 -\rho) (\xi_1 -\rho)$. 

\item The second step consists in using a variational formula to express the LHS of (\ref{eq:4:w01}). Indeed we have
\begin{equation*}
\begin{split}
\ll W_{0,1} , (z -{\tilde {\mc L}})^{-1} W_{0,1} \gg &=\sup_{g} \left\{ 2 \ll W_{0,1}, g \gg - \ll g, (z -\gamma {\tilde{\mc S}}) g  \gg \right.\\
& \quad \quad \left.  - \ll {\tilde{\mc A}} g, (z-\gamma {\tilde{\mc S}})^{-1} {\tilde{\mc A}}g \gg \right\}
\end{split}
\end{equation*}
where the supremum is taken over local compactly supported smooth functions~$g$. To get a lower bound it is sufficient to find a function $g$ for which one can show that 
$$2 \ll W_{0,1}, g \gg - \ll g, (z -\gamma {\tilde{\mc S}}) g  \gg - \ll {\tilde{\mc A}} g, (z-\gamma {\tilde{\mc S}})^{-1} {\tilde{\mc A}}g \gg \; \ge \; c z^{-1/4}.$$

\item Let ${\bb H}$ be the Hilbert space obtained by completion of the set of local functions w.r.t. the inner product $\ll \cdot, \cdot \gg$. Since $\nu_{\rho, \theta}$ is a product of Gamma distributions, the set of multivariate Laguerre polynomials form an orthogonal basis of ${\bb H}$. It is then possible to decompose $\bb H$ as an orthogonal sum $\oplus_{n \in \N} {\bb H}_n$ of subspaces ${\bb H}_n$ such that 
\begin{equation*}
{\tilde{\mc S}}: {\bb H}_n \to {\bb H}_n, \quad {\tilde{\mc A}}:{\bb H}_n \to {\bb H}_{n-1} \oplus {\bb H}_n \oplus {\bb H}_{n+1}.
\end{equation*}
The function $W_{0,1}$ belongs to ${\bb H}_2$. Then we restrict the variational formula to functions $g \in {\bb H}_2$ and we estimate the corresponding new variational problem which is still infinite dimensional but involves only functions belonging to ${\bb H}_1 \oplus {\bb H}_2 \oplus {\bb H}_3$. To solve this variational problem we adapt ideas developed first in the context of Asymmetric Simple Exclusion Process (\cite{B1}, \cite{LQSY}) and exploited later for other models. One of the difficulties comes again from the fact that the noise is degenerate.  
\end{itemize}

The extension of Theorem \ref{th:diffusivity} to other interacting potentials is a challenging problem. The general strategy presented here could be carried out but the orthogonal basis (formed by Laguerre polynomials in the exponential case) is no longer explicit and only defined by some recurrence relations.

\begin{acknowledgement}
This work has been supported by the Brazilian-French Network in Mathematics and the French Ministry of Education through the ANR grant EDNHS. The referees deserve thanks for careful reading and many useful comments.
\end{acknowledgement}

\bibliography{revbibPSPDE-BERNARDIN}{}
\bibliographystyle{plain}


\end{document}